\DeclareFontFamily{OT1}{pzc}{}
\DeclareFontShape{OT1}{pzc}{m}{it}{<-> s * [1.10] pzcmi7t}{}
\DeclareMathAlphabet{\mathpzc}{OT1}{pzc}{m}{it}
\newtheorem{theorem}{Theorem}[section]
\newtheorem{question}[theorem]{Question}
\newtheorem{corollary}[theorem]{Corollary}
\newtheorem{definition}[theorem]{Definition}
\newtheorem{example}[theorem]{Example}
\newtheorem{lemma}[theorem]{Lemma}
\newtheorem{proposition}[theorem]{Proposition}
\numberwithin{equation}{section}
\def\XXint#1#2#3{{\setbox0=\hbox{$#1{#2#3}{\int}$}
		\vcenter{\hbox{$#2#3$}}\kern-.5\wd0}}
\newcommand{\R}{\mathbb{R}}
\newcommand{\N}{\mathbb{N}}
\newcommand{\Z}{\mathbb{Z}}
\newcommand{\Ha}{\mathcal{H}}
\newcommand{\leb}{\mathcal{L}}
\newcommand{\spt}{\operatorname{spt}}
\newcommand{\Lip}{\operatorname{Lip}}
\newcommand{\LIP}{\operatorname{LIP}}
\newcommand{\ud}{\mathrm {d}}
\newcommand{\inv}{^{-1}}
\newcommand{\FillVol}{\operatorname{FillVol}}
\newcommand{\vol}{\operatorname{Vol}}
\newcommand{\Jac}{\operatorname{Jac}}
\newcommand{\md}{\operatorname{md}}
\newcommand{\norm}[1]{\lVert#1\rVert}
\newcommand{\abs}[1]{\lvert#1\rvert}
\newcommand{\on}{\:\mbox{\rule{0.1ex}{1.2ex}\rule{1.1ex}{0.1ex}}\:}
\newcommand{\bb}[1]{\llbracket #1\rrbracket}
\newcommand{\susbet}{\subset}
\DeclareMathOperator{\mass}{\mathbf{M}}
\DeclareMathOperator{\bI}{\mathbf{I}}
\DeclareMathOperator{\set}{set}
\DeclareMathOperator{\Ell}{\mathscr{L}}
\DeclareMathOperator{\ir}{ir}
\renewcommand{\rho}{\varrho}
\title[Filling minimality and rigidity of convex bodies]{Filling minimality and Lipschitz-volume rigidity of convex bodies among integral current spaces}
\keywords{Minimal fillings, Lipschitz-volume rigidity, Metric currents, Intrinsic flat convergence}
\subjclass[2020]{53C23, 49Q15}
\author{Giuliano Basso}
\address{
Max Planck Institute for Mathematics,
Vivatsgasse 7,
53111 Bonn,
Germany}
\email{basso@mpim-bonn.mpg.de}
\author{Paul Creutz}
\address{
Max Planck Institute for Mathematics,
Vivatsgasse 7,
53111 Bonn,
Germany}
\email{creutz@mpim-bonn.mpg.de}
\author{Elefterios Soultanis}
\address{Radboud University, Heyendaalseweg 135, 6525 AJ, Nijmegen, Netherlands}
\email{elefterios.soultanis@gmail.com}
\date{\today}
\begin{document}
\begin{abstract}
In this paper we consider metric fillings of convex bodies. We show that convex bodies $C\subset \mathbb{R}^n$ are the unique minimal fillings of their boundary metrics among all integral current spaces. To this end, we also prove that convex bodies enjoy the Lipschitz-volume rigidity property within the category of integral current spaces, which is well known in the smooth category. As further applications of this result, we prove a variant of Lipschitz-volume rigidity for round spheres and answer a question of Perales concerning the intrinsic flat convergence of minimizing sequences for the Plateau problem.
\end{abstract}
\maketitle
\section{Introduction}
\subsection{Statement of main results}\label{sec:intro}
Let $Y$ be a closed orientable smooth manifold equipped with a compatible metric. Following Gromov \cite{gro87} the \emph{filling volume} $\FillVol_\infty(Y)$ is defined to be the infimum over the volumes of complete Riemannian manifolds $X$  which bound $Y$. A Riemannian manifold $M$ is called \emph{minimal filling} if $\vol^n(M)=\FillVol_\infty(\partial M)$, where $\partial M$ is equipped with the subspace metric. Calculating filling volumes and finding minimal fillings is notoriously difficult. Even the filling volume of simple spaces, such as $S^1$ endowed with the angular metric, is unknown. Whether the minimal filling in this case is the round hemisphere is called Gromov's filling area conjecture and, despite  remarkable partial results \cite{BCIK05, pu52, zust2021riemannian}, it remains widely open. 

The deep work of Burago--Ivanov \cite{bi10,bi13} shows that Riemannian manifolds which are $C^3$-close to full dimensional submanifolds of Euclidean or hyperbolic space are minimal fillings. See also \cite{ruan2022filling} for a recent generalization of their work to symmetric spaces of negative curvature. However not every $Y$ admits a smooth minimal filling as defined above. As a consequence of the Sormani–Wenger compactness theorem \cite{sw11} the situation changes when, in the definition of the filling volume, smooth Riemannian manifolds are replaced by integral current spaces $X$. The latter guarantees that $X$ carries an analytically defined object~$\bb{X}$ that one should think of as the fundamental class of $X$. In particular there are well defined notions of boundary and volume for such spaces, and hence it is possible to define the filling volume of an integral current space $Y$ as well as when an integral current space $X$ is a minimal filling, see e.g.\ \cite[Section 2.7]{ps17}.

The first main result in this paper is that convex bodies in $\mathbb{R}^n$ are 
the unique minimal fillings of their boundaries among all integral current spaces. Integral current spaces include, but are not limited to, compact oriented manifolds equipped with a metric that is bi-Lipschitz equivalent to a Riemannian (or Finsler) metric.

\begin{theorem}
\label{thm:main-i}

Let \(C\susbet \R^n\) be a convex body and suppose \(\iota\colon \partial C\to X\) is an isometric embedding into an integral current space \(X\) such that \(\iota_{\#}\bb{\partial C}=\bb{\partial X}\). Then
\begin{equation}\label{eq:main-ineq}
\mass^{\ir}({\bb{X}})\geq \vol^n(C)
\end{equation}
with equality if and only if the map \(\iota\) extends to an isometry $C\to X$.
\end{theorem}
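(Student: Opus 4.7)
The strategy is to construct a $1$-Lipschitz retraction $f\colon X\to C$ whose restriction to $\iota(\partial C)$ is $\iota^{-1}$, and then compare masses via the constancy theorem. Since $C\susbet \R^n$ is closed and convex, the nearest-point projection $\pi_C\colon \R^n\to C$ is $1$-Lipschitz, so if one can produce a $1$-Lipschitz extension $\tilde f\colon X\to \R^n$ of the map $\iota(y)\mapsto y$, then $f:=\pi_C\circ \tilde f$ is the desired retraction. Since $X$ is a general metric space, Kirszbraun's theorem does not apply verbatim, so I would either work in a Kuratowski-type embedding of $X$ into an injective space, or exploit the bi-Lipschitz charts coming from the rectifiability of the integral current $\bb{X}$, to construct $\tilde f$.

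Given such an $f$, the push-forward $T:=f_{\#}\bb{X}$ is an integer rectifiable $n$-current in $\R^n$ supported in $C$. Using $\iota_{\#}\bb{\partial C}=\bb{\partial X}$ together with $f|_{\iota(\partial C)}=\iota^{-1}$, one computes $\partial T=\bb{\partial C}$, so $T-\bb{C}$ is a compactly supported integer rectifiable $n$-current in $\R^n$ with zero boundary; the constancy theorem forces $T=\bb{C}$. The inequality \eqref{eq:main-ineq} then follows from the standard $1$-Lipschitz mass decrease
\[ \mass^{\ir}(\bb{X})\geq \mass^{\ir}(f_{\#}\bb{X})=\mass^{\ir}(\bb{C})=\vol^n(C). \]

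For the equality case, $f\colon X\to C$ is a $1$-Lipschitz map satisfying $f_{\#}\bb{X}=\bb{C}$ and $\mass^{\ir}(\bb{X})=\vol^n(C)$, and I would conclude by invoking the Lipschitz-volume rigidity for convex bodies within the integral current category, which the abstract advertises as a companion main result: such a mass-preserving $1$-Lipschitz map must be an isometry, and its inverse is then the desired isometric extension $C\to X$ of $\iota$. I expect the main obstacle to be precisely this rigidity step: the classical smooth arguments are unavailable since $X$ need not be a manifold, so one must combine the Ambrosio--Kirchheim rectifiable structure of $X$ with careful control of metric differentials and tangent behaviour of $\bb{X}$ in order to promote the measure-theoretic equality into a pointwise isometry.
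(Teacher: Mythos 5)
The crucial step you identify---producing a $1$-Lipschitz extension $\tilde f\colon X\to\R^n$ of $\iota^{-1}$---is precisely where the proposal breaks down, and it is the gap the paper explicitly flags: ``the argument of Burago--Ivanov showing that $f$ is $1$-Lipschitz does not generalize directly to integral current spaces.'' Euclidean $\R^n$ is not an injective metric space, so McShane/Kirszbraun-type extension is unavailable in this generality; extending each coordinate separately only yields a map that is $1$-Lipschitz into $\ell_\infty^n$, hence merely $\sqrt{n}$-Lipschitz into Euclidean $\R^n$, which destroys the sharp mass comparison. Neither of your suggested workarounds repairs this: the Kuratowski embedding takes $X$ into $\ell^\infty$, from which there is no canonical $1$-Lipschitz projection onto Euclidean $\R^n$, and the bi-Lipschitz charts supplied by the rectifiable structure of $\bb{X}$ have no reason to be $1$-Lipschitz or to patch together coherently.

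The paper circumvents this by obtaining the $1$-Lipschitz retraction only as a \emph{consequence}, not a first step. For the lower bound, one extends $\iota^{-1}$ to a $1$-Lipschitz map into the injective space $\ell_\infty^n$ and uses $\Jac^{m*}(\norm{\cdot}_\infty)=1$ together with the constancy theorem, so the mass of $\bb{C}$ is unchanged when measured in the $\ell_\infty$-metric. For rigidity, one embeds $\R^n$ linearly and isometrically into $\Ell=L^\infty(S^{n-1})$ via a map $\Phi$ chosen so that the composition with the canonical inclusion $I\colon\Ell\to L^2(S^{n-1})$ is again isometric; the injectivity of $\Ell$ gives a $1$-Lipschitz extension $f\colon X\to\Ell$ of $\Phi\circ\iota^{-1}$, and a John-ellipsoid argument shows $I$ does not increase $\mass^{\ir}$. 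A first application of Theorem~\ref{thm:rigidity-result-current-version}, applied to the orthogonal projection in the Hilbert space $L^2(S^{n-1})$ onto the embedded copy of $\R^n$ and to the current $I_\#f_\#\bb{X}$, then forces the image of $f$ to lie in $\Phi(C)$---and only at that point does one have the $1$-Lipschitz map $X\to C$ your outline posits from the outset; a second application of Theorem~\ref{thm:rigidity-result-current-version} finishes. The remaining ingredients in your proposal (constancy theorem, mass monotonicity, invoking Lipschitz-volume rigidity) are all correct, but the construction of the $1$-Lipschitz retraction is the substantial content of the proof, not a preliminary step you can assume.
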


Here $\mass^{\ir}(T)$ denotes the inscribed Riemannian mass of an integral current $T$, a variant of the usual mass $\mass(T)$ as introduced by Ambrosion--Kirchheim \cite{amb00}. We refer to Section~\ref{sec:rectifiable_currents} for the precise definitions and mention here that $\mass^{\ir}(\bb{X})$, resp.\ $\mass^{\ir}(\bb{\partial X})$, correspond to the the volume $\vol^n(X)$, resp. boundary volume $\vol^{n-1}(\partial X)$, when $X$ is a Riemannian manifold.  Furthermore one always has \(\mass(\bb{X}) \leq \mass^{\ir}(\bb{X})\). If we additionally assume that $X$ is infitesimally Euclidean, in the sense of having property (ET) \cite{lw17}, then the two notions of mass agree. We remark that the mass estimate \eqref{eq:main-ineq} remains true if \(\mass^{\ir}(\bb{X})\) is replaced by \(\mass(\bb{X})\); see Lemma~\ref{lem:absolute-filling-vol}. This answers a question by Sormani mentioned in her talk \cite[57:15-58:00]{sor22}. However, the substantial part of Theorem~\ref{thm:main-i} is the rigidity, which remains open for the Ambrosio--Kirchheim mass $\mass$.

The proof of Theorem~\ref{thm:main-i} is based on ideas of Burago--Ivanov \cite{bi10, bi13}. They use the following observation: \textit{Let $X$, $M$ be closed orientable Riemannian manifolds of the same dimension and $f\colon X\to M$ be a $1$-Lipschitz map of degree one. If $\vol^n(X)\leq \vol^n(M)$ then $f$ is a metric isometry.} Variants of this statement have been obtained and applied by Besson--Courtois--Gallot, Burago--Ivanov and Cecchini--Hanke--Schick \cite{bcg95, bi95, bi10, bi13, chs22}. Generalizations to the singular settings of Alexandrov and limit RCD spaces were obtained by Li and Li--Wang in \cite{li15, lw14}. See also \cite{li20} for an overview on these so-called \emph{Lipschitz-volume rigidity} results. The following variant, which is needed in the proof of Theorem~\ref{thm:main-i}, does not impose curvature assumptions on the domain space $X$ and is hence of independent interest.

\begin{theorem}\label{thm:rigidity-result-current-version}
Let \(X\) be an $n$-dimensional integral current space, $C\subset \mathbb{R}^n$ be a convex body and \(f\colon X\to \mathbb{R}^n\) be a \(1\)-Lipschitz map
such that \(f_\# \bb{\partial X}=\bb{\partial C}\). If \(\mass(\bb{\partial X})\leq \vol^{n-1}(\partial C)\) and \(\mass(\bb{X})\leq \vol^n(C)\), then \(f\) is an isometry.
\end{theorem}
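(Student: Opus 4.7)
My plan is to follow the classical Burago--Ivanov strategy adapted to the integral current space setting: first identify the pushforward current $f_\#\bb{X}$ exactly, then extract infinitesimal rigidity from the area formula, and finally globalize to a metric isometry.

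The first step is to apply a constancy-type argument in $\R^n$. Since $f$ is Lipschitz, $T := f_\#\bb{X}$ is an integral $n$-current in $\R^n$ with
\[
\partial T = f_\#\bb{\partial X} = \bb{\partial C} = \partial \bb{C}.
\]
Top-dimensional integral currents in $\R^n$ are parametrized by compactly supported $\Z$-valued BV functions and thus are uniquely determined by their boundary, so $T = \bb{C}$ and $\mass(T) = \vol^n(C)$. Combined with the 1-Lipschitz mass inequality $\mass(T) \leq \mass(\bb{X})$ and the hypothesis $\mass(\bb{X}) \leq \vol^n(C)$, all three quantities coincide.

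The second step is to invoke the Ambrosio--Kirchheim area formula:
\[
\vol^n(C) = \mass(f_\#\bb{X}) = \int_{\set \bb{X}} \theta(x)\, \Jac(df_x)\, \ud\Ha^n(x),
\]
where $\theta \geq 1$ is the integer density of $\bb{X}$ and $\Jac(df_x) \leq 1$ by the 1-Lipschitz assumption. Coupling this with $\mass(\bb{X}) = \int \theta\, \ud\Ha^n = \vol^n(C)$ forces $\theta \equiv 1$, $\Jac(df_x) = 1$ at $\Ha^n$-a.e.\ $x\in\set\bb{X}$, and essential injectivity of $f$ on $\set\bb{X}$. By rectifiability, at such a point the approximate tangent space is a norm space $(\R^n, \|\cdot\|_x)$ and $df_x\colon (\R^n, \|\cdot\|_x) \to (\R^n, |\cdot|)$ is a linear 1-Lipschitz bijection of metric Jacobian $1$. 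A linear-algebraic rigidity (saturation of the inequality $\Jac \leq 1$ for 1-Lipschitz linear maps into Euclidean space forces the domain norm to be Euclidean and the map to be an isometry) then gives that $\|\cdot\|_x$ is Euclidean and $df_x$ is an orthogonal transformation. Thus $f$ is an infinitesimal isometry almost everywhere.

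The main obstacle is the final step: upgrading this a.e.\ infinitesimal information to the global metric identity $d_X(x,y) = |f(x) - f(y)|$. One direction follows from the 1-Lipschitz hypothesis; the reverse is delicate because integral current spaces need not be length spaces or even path-connected, so the classical geodesic integration argument is unavailable. I would exploit the convexity of $C$ by lifting straight segments $[f(x),f(y)] \subset C$ to measurable curves in $X$ via Ambrosio--Kirchheim slicing of $\bb{X}$ along affine functions $\langle f(\cdot), v\rangle$, using the a.e.\ infinitesimal isometry from Step 2 to control the length of the lifts. Combining such lifts for pairs in the dense subset $\set\bb{X} \subset X$ and extending by continuity should then yield the global isometry, but making this lifting argument rigorous in the low-regularity setting---and ensuring the resulting curves actually connect the chosen preimages---is the crux of the argument.
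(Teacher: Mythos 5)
Your Step 1 (constancy theorem $\Rightarrow f_\#\bb{X}=\bb{C}$ and saturation of the mass chain) matches the paper. From there the two arguments diverge, and your proposal has a genuine gap in the final step.

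On Step 2: the displayed identity $\mass(f_\#\bb{X})=\int_{\set\bb{X}}\theta\,\Jac(df_x)\,\ud\Ha^n$ is not correct as stated — the mass of a pushforward can be strictly smaller than this integral because of overlapping sheets and cancellation of multiplicities — but a corrected version of the argument does work: from $\mass(\bb{X})=\vol^n(C)$ one can run the area formula on the charts (as the paper does in Lemma~\ref{lem:mass-of-images}) and obtain equality in a chain that passes through the \emph{mass$*$} Jacobian $\Jac^{m*}$. To then conclude $\md(f\circ\varphi_i)_q=\md\varphi_i{}_q$ one must know that $\Jac^{m*}$-equality with $\md(f\circ\varphi_i)_q\leq\md\varphi_i{}_q$ forces equality; this is false for general pairs of seminorms (this is precisely the $I^2$ vs.\ $I^2_\infty$ phenomenon in Section~\ref{sec:8}), and holds here only because $\md(f\circ\varphi_i)_q$ is Euclidean. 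Your "linear-algebraic rigidity" claim is therefore not routine and needs this hypothesis; it is a point the paper deliberately sidesteps. Notably, the paper never derives infinitesimal isometry — it only extracts the weaker mass-preservation and essential injectivity statements (Lemmas~\ref{lem:mass-of-preimages}, \ref{lem:mass-of-images}) and never passes through the Jacobian rigidity you appeal to.

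The serious gap is Step 3, which you acknowledge is unfinished, and which is in fact where the hard work lies. You never use the hypothesis $\mass(\bb{\partial X})\leq\vol^{n-1}(\partial C)$, yet this boundary-mass bound is essential: it is what controls the $1$-dimensional slices $T_p=\langle\bb{X},\rho\circ f,p\rangle$ and guarantees $\mass(\partial T_p)\leq 2$ for a.e.\ $p$. Without that bound the slices could consist of many arcs with extraneous boundary, and no lifting scheme would force the unique lifted curve to connect the two boundary preimages. The paper's resolution is to slice directly down to dimension one, invoke the Bonicatto--Del Nin--Pasqualetto decomposition to write each slice current as a sum of injective arcs and loops, use the boundary-mass bound to see there is exactly one arc and that arc is a geodesic of the right length, and then globalize slice-isometries to a full isometry via Proposition~\ref{lemm:step-1}. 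An a.e.\ infinitesimal isometry alone does not substitute for this: integral current spaces need not be length spaces, so pointwise metric-differential information cannot be integrated along curves unless you first produce the curves — which is exactly what the $1$-dimensional slicing accomplishes, and exactly where your omitted boundary hypothesis enters.
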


Note that the condition on the degree of \(f\) is replaced by \(f_\# \bb{\partial X}=\bb{\partial C}\). Using Federer's constancy theorem, it is easy to see that this implies \(f_\# \bb{X}=\bb{C}\). 

As a corollary of Theorem~\ref{thm:rigidity-result-current-version} we derive the Lipschitz-volume rigidity of the round sphere $S^n$ with respect to the inscribed Riemannian area functional. Here \emph{round sphere} refers to the standard sphere $S^n$ endowed with its intrinsic metric as a Riemannian manifold. Indeed for simple reasons $S^n$ endowed with the subspace metric of $\mathbb{R}^{n+1}$ cannot be Lipschitz--volume rigid; see Section~\ref{sec:counterexamples-questions} below.

\begin{corollary}
\label{cor:sphere-rigidity}
Let \(X\) be an $n$-dimensional integral current space with $\partial X=\varnothing$ and \(f\colon X\to S^n\) be a \(1\)-Lipschitz map
such that \(f_\# \bb{X}=\bb{S^n}\). If \(\mass^{\ir}(\bb{X})\leq \vol^{n}(S^n)\), then \(f\) is an isometry.
\end{corollary}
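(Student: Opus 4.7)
The plan is to reduce the corollary to Theorem~\ref{thm:rigidity-result-current-version} (applied in dimension $n+1$) via a spherical cone construction. The key observation is that the metric cone over $(S^n,d_{S^n})$ is canonically isometric to the closed Euclidean ball $D^{n+1}\subset\R^{n+1}$ via $(p,r)\mapsto rp$, so coning off $X$ should produce an $(n+1)$-dimensional integral current space sitting above a convex body in $\R^{n+1}$.

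Concretely, I will set $CX:=(X\times[0,1])/(X\times\{0\})$ equipped with the spherical cone metric
\[
d_C\bigl((x,r),(y,s)\bigr):=\sqrt{r^2+s^2-2rs\cos\bigl(\min(d_X(x,y),\pi)\bigr)},
\]
which recovers the Euclidean metric on $D^{n+1}$ when $X=S^n$. The pair $(CX,d_C)$ should carry a natural $(n+1)$-dimensional integral current structure $\bb{CX}$, obtained by coning off $\bb X$, satisfying $\partial\bb{CX}=\bb X$ and, via the warped-product volume formula (or the standard cone inequality with cone radius $R=1$), $\mass(\bb{CX})\leq\mass(\bb X)/(n+1)$. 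The map $Cf\colon CX\to D^{n+1}$ defined by $Cf(x,r):=rf(x)\in\R^{n+1}$ is $1$-Lipschitz, using the $1$-Lipschitzness of $f$ together with the monotonicity of $\cos$ on $[0,\pi]$.

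With these pieces in place, the hypotheses of Theorem~\ref{thm:rigidity-result-current-version} in dimension $n+1$ are satisfied: $(Cf)_\#\bb{\partial CX}=f_\#\bb X=\bb{\partial D^{n+1}}$, the boundary mass bound reads $\mass(\bb{\partial CX})=\mass(\bb X)\leq\mass^{\ir}(\bb X)\leq\vol^n(S^n)$, and
\[
\mass(\bb{CX})\leq\tfrac{\mass(\bb X)}{n+1}\leq\tfrac{\mass^{\ir}(\bb X)}{n+1}\leq\tfrac{\vol^n(S^n)}{n+1}=\vol^{n+1}(D^{n+1}).
\]
The theorem then gives that $Cf$ is an isometry. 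Restricting to the base $X\times\{1\}\subset CX$ and $\partial D^{n+1}=S^n$ and reading off the induced chord metrics $2\sin(\min(d_X(x,y),\pi)/2)$ and $2\sin(d_{S^n}(f(x),f(y))/2)$ respectively, I obtain $\min(d_X(x,y),\pi)=d_{S^n}(f(x),f(y))$ for all $x,y\in X$. Since $f_\#\bb X=\bb{S^n}$ forces $f$ to be essentially surjective, I can pick for any $x,y\in X$ a preimage $z$ of the $S^n$-midpoint of $f(x)$ and $f(y)$; a short triangle-inequality argument, applying the identity on the two pairs $(x,z)$ and $(z,y)$ whose $S^n$-images are at distance strictly less than $\pi$, upgrades this to $d_X(x,y)\leq\pi$ everywhere, whence $d_X(x,y)=d_{S^n}(f(x),f(y))$ and $f$ is a metric isometry.

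The main technical obstacle will be putting the integral current structure on the metric cone $CX$ and justifying the cone mass bound $\mass(\bb{CX})\leq\mass(\bb X)/(n+1)$ in the Ambrosio--Kirchheim framework with its warped-product metric. I expect this to go through by isometrically embedding $X$ into $\ell^\infty(X)$ and realizing the cone current in a suitable ambient space, but aligning the ambient cone metric with the intrinsic spherical cone metric at the level of mass is the delicate step of the reduction.
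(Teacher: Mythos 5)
Your reduction via the spherical (Euclidean) cone is exactly the paper's strategy, including the final chord-to-arc comparison on $X\times\{1\}$ and the antipodal case via a midpoint. However, the central mass bound as you state it is false, and this is precisely the subtlety the paper's Lemma~5.1 addresses. You claim
\[
\mass(\bb{CX})\le \frac{\mass(\bb X)}{n+1}
\]
via ``the standard cone inequality with cone radius $R=1$.'' The Ambrosio--Kirchheim cone inequality only yields $\mass(CT)\le R\cdot\mass(T)$; it does \emph{not} have the $1/(n+1)$ improvement, because the Gromov mass$^*$ Jacobian (which computes $\mass$ via \eqref{eq:mass*-characterization}) is not multiplicative for products of seminorms. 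The factor $1/(n+1)$ is available only for the inscribed Riemannian mass $\mass^{\ir}$: by John's theorem the John ellipsoid of a product seminorm is the product of the John ellipsoids, giving $\Jac^{\ir}(\sigma_1\times\sigma_2)=\Jac^{\ir}(\sigma_1)\Jac^{\ir}(\sigma_2)$, and hence the exact identity $\mass^{\ir}(CT)=\frac{1}{n+1}\mass^{\ir}(T)$ after integrating $r^n$ over $[0,1]$. This is in fact why the Corollary is phrased with $\mass^{\ir}$ rather than $\mass$.

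The fix preserves your architecture but routes the chain through $\mass^{\ir}$ throughout: replace the offending step by
\[
\mass(\bb{CX})\le \mass^{\ir}(\bb{CX})=\frac{\mass^{\ir}(\bb X)}{n+1}\le \frac{\vol^n(S^n)}{n+1}=\vol^{n+1}(B^{n+1}),
\]
and similarly note $\mass(\bb{\partial CX})=\mass(e_\#\bb X)\le\mass(\bb X)\le\mass^{\ir}(\bb X)\le\vol^n(S^n)$ (your ``$=\mass(\bb X)$'' should be ``$\le$'', since $e$ is merely $1$-Lipschitz, though the direction of the needed inequality is unaffected). To justify $\mass^{\ir}(\bb{CX})=\mass^{\ir}(\bb X)/(n+1)$ one computes the metric differential of the coned chart $\widetilde{\varphi}(x,r)=[(\varphi(x),r)]$ using $1-\cos t=\tfrac{t^2}{2}+O(t^4)$, which gives $\md\widetilde{\varphi}_{(x,r)}(v,s)=\sqrt{r^2(\md\varphi_x(v))^2+s^2}$, then applies the product formula for $\Jac^{\ir}$ and Fubini. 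With this correction the remainder of your argument (applying Theorem~\ref{thm:rigidity-result-current-version} with $C=B^{n+1}$ and deducing the arc-length isometry on $X$) goes through as written.
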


Besides the proofs of Theorem~\ref{thm:main-i} and Corollary~\ref{cor:sphere-rigidity}, as another application of Theorem~\ref{thm:rigidity-result-current-version}  we answer the following question of Perales concerning the Euclidean unit ball $B^n$. This question was promoted in the same talk by Sormani \cite[53:12--55:40]{sor22}. 

\begin{question}[Perales]
\label{question:Perales-Sormani}
Assume $(M_i)$ is a sequence of compact orientable Riemannian $n$-manifolds with 
$$
\lim_{i\to \infty} \vol^n (M_i)\leq \vol^n(B^n)
$$ 
which converges in the intrinsic flat sense to a limit space $X$. Assume further that $f_i\colon M_i\to \mathbb{R}^N$ are $1$-Lipschitz maps such that $f_{i\#}\bb{\partial M_i}=\bb{S^{n-1}}$ for all \(i\in\N\), the $f_i$ take values in a compact set $K\subset \mathbb{R}^N$, and $(f_i)$ converges in the sense of the Sormani Arzelà--Ascoli theorem~\cite{sor18} to a limit map $f\colon X\to K$. Does it follow that \(f\) is an isometry $X\to B^n$?
\end{question}

Informally, Question \ref{question:Perales-Sormani} asks about the interplay of the `extrinsic' flat convergence of integral currents in $\mathbb{R}^N$ and the intrinsic flat convergence of the corresponding `intrinsic metrics' on the currents.  In general, Question~\ref{question:Perales-Sormani} has a negative answer, see Example~\ref{ex:counterexample} below. However, as a consequence of Theorem ~\ref{thm:rigidity-result-current-version} the answer is positive if one 
imposes a suitable bound on the boundary volumes.
\begin{corollary}\label{prop:answer-to-sormani-perales}
Let $C\subset \R^n$ be a convex body. Suppose \((X_i)\) is a sequence of integral current spaces with
\[
\liminf_{i\to \infty} \mass(\bb{\partial X_i}) \leq \vol^{n-1}(\partial C) \quad \textnormal{and}\quad \liminf_{i\to \infty} \mass(\bb{X_i}) \leq \vol^n(C)
\]
which converges in the intrinsic flat sense to a limit space $X$. Assume further that \(f_i \colon X_i \to \R^N\) are \(1\)-Lipschitz maps such that \(f_{i\#}\bb{\partial X_i}\) flat converges to \(\bb{\partial C}\), the \(f_i\) take values in a compact set \(K\susbet \R^N\), and \((f_i)\) converges in the sense of Theorem~\ref{thm:Sormani-arzela} to a \(1\)-Lipschitz map \(f\colon X\to K\). Then \(f\) is an isometry \(X \to C\). 
\end{corollary}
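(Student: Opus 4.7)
The plan is to apply Theorem~\ref{thm:rigidity-result-current-version} to the composition $g=\pi\circ f\colon X\to \R^n$, where $\pi\colon\R^N\to\R^n$ denotes orthogonal projection onto the affine hull of $C$. By lower semi-continuity of mass along intrinsic flat convergence,
\[
\mass(\bb{X})\leq \liminf_i \mass(\bb{X_i})\leq \vol^n(C),\qquad \mass(\bb{\partial X})\leq \liminf_i \mass(\bb{\partial X_i})\leq \vol^{n-1}(\partial C),
\]
so the mass bounds in Theorem~\ref{thm:rigidity-result-current-version} are automatic. The crucial next step is to identify the boundary pushforward: intrinsic flat convergence $X_i\to X$ together with Sormani Arzel\`a--Ascoli convergence $f_i\to f$ into the common compact target $K$ should imply flat convergence $f_{i\#}\bb{\partial X_i}\to f_\#\bb{\partial X}$ in $\R^N$, which combined with the hypothesis $f_{i\#}\bb{\partial X_i}\to \bb{\partial C}$ yields $f_\#\bb{\partial X}=\bb{\partial C}$.

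Since $\pi$ restricts to the identity on $\R^n\supset \partial C$, this gives $g_\#\bb{\partial X}=\pi_\#\bb{\partial C}=\bb{\partial C}$, and $g$ is $1$-Lipschitz as a composition of $1$-Lipschitz maps. From $g_\#\bb{\partial X}\neq 0$ we conclude $\bb{X}\neq 0$, so $X$ is an $n$-dimensional integral current space. Theorem~\ref{thm:rigidity-result-current-version} now applies and shows $g\colon X\to C$ is an isometry. Because
\[
d_X(x,y)=\lvert g(x)-g(y)\rvert=\lvert \pi(f(x))-\pi(f(y))\rvert\leq \lvert f(x)-f(y)\rvert\leq d_X(x,y),
\]
equality holds throughout, forcing $f(x)-f(y)$ to lie in the affine hull of $C$ for all $x,y\in X$. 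Combined with $\spt(f_\#\bb{\partial X})=\partial C\subset \R^n$ this pins down the translate and forces $f$ to agree with $g$ through the inclusion $\R^n\hookrightarrow \R^N$, so $f$ is itself an isometry onto $C$.

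The main obstacle I anticipate is the bridging step at the end of the first paragraph: extracting the flat convergence $f_{i\#}\bb{\partial X_i}\to f_\#\bb{\partial X}$ in $\R^N$ from the combined intrinsic flat convergence of the spaces $X_i$ and the Sormani Arzel\`a--Ascoli convergence of the maps $f_i$. Making this precise requires unpacking the intrinsic flat distance via isometric embeddings into a common target, lifting the $f_i$ consistently, and using their uniform convergence to compare extrinsic pushforwards; this is the only step that is not an immediate appeal to lower semi-continuity, Federer-type constancy, or Theorem~\ref{thm:rigidity-result-current-version}.
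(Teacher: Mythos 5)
Your approach is essentially the paper's; the ``bridging step'' you flag as the main obstacle is exactly what the paper isolates beforehand as Lemma~\ref{lem:functorality-arzela-ascoli}: intrinsic flat convergence $X_i\to X$ together with Sormani Arzel\`a--Ascoli convergence of uniformly bounded $L$-Lipschitz maps $f_i\colon X_i\to\R^N$ yields flat convergence $f_{i\#}\bb{X_i}\to f_\#\bb{X}$, hence of boundaries since $\mathcal F(\partial T)\leq\mathcal F(T)$. The paper's proof of that lemma is precisely the unpacking you sketch: embed all the $X_i$ and $X$ isometrically into a common complete separable $Z$ so that the fundamental currents flat converge, extend each $f_i$ to a uniformly Lipschitz $F_i\colon Z\to\R^N$ via McShane, show pointwise convergence $F_i\circ\phi\to F\circ\phi$ and deduce weak convergence $F_{i\#}T\to F_\# T$ from Definition~\ref{def:current}(\ref{ax:1}),(\ref{ax:3}) plus dominated convergence, then upgrade weak to flat convergence using the uniform $\mathbf{N}$-bound and the coning inequalities in $\R^N$. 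So you have not missed an idea --- you have correctly located the lemma you need and sketched its proof, just not written it out.

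The one genuinely new ingredient you add is the composition with the orthogonal projection $\pi\colon\R^N\to\R^n$. The paper applies Theorem~\ref{thm:rigidity-result-current-version} directly to $f$ without comment, even though that theorem is stated for a target in $\R^n$; your step of applying it to $g=\pi\circ f$, and then using the equality case of $\lvert\pi(f(x))-\pi(f(y))\rvert\leq\lvert f(x)-f(y)\rvert$ together with $\spt f_\#\bb{\partial X}=\partial C\subset\R^n$ to conclude $f$ already lands in $\R^n$ and hence $f=g$, is a clean way to make that reduction explicit. This part of your argument is correct and slightly more careful than the paper's.
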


\subsection{Strategy of proof} We prove the filling volume rigidity Theorem \ref{thm:main-i} by deducing it from the Lipschitz-volume rigidity Theorem \ref{thm:rigidity-result-current-version} following the arguments of Burago--Ivanov \cite{bi10,bi13}. The key idea is to consider a linear isometric embedding $\Phi\colon\R^n\to \Ell$ into the injective Banach space $\Ell:=L^{\infty}(S^{n-1})$. There is an inner product on $\Ell$ whose induced norm agrees with $\|\cdot\|_\infty$ on $\Phi(\R^n)$ and which does not increase the inscribed Riemannian masses of rectifiable currents in $\Ell$ (see Lemma \ref{lem:proof-of-main-thm}). The inner product induces an orthogonal projection onto $\Phi(\mathbb{R}^n)$ which, together with the injectivity of $\Ell$,  implies the existence of a map $f\colon X \to \mathbb{R}^n$ with $f_{\#} \bb{\partial X}=\bb{\partial C}$ that does not increase inscribed Riemannian volumes.  Although the argument of Burago--Ivanov showing that $f$ is 1-Lipschitz does not generalize directly to integral current spaces, Theorem~\ref{thm:main-i} follows from a double application of Theorem~\ref{thm:rigidity-result-current-version} which circumvents this problem.

To prove our Lipschitz-volume rigidity result, Theorem \ref{thm:rigidity-result-current-version}, we use a recently established decomposition result for $1$-dimensional currents \cite{Bonicatto-Del-Nin-Pasqualetto-2022} to obtain the case $n=1$. The general case \(n\geq 2\) can be  reduced to the 1-dimensional case by considering suitable slicings of the current \(T=\bb{X}\). Let \(H\susbet \R^n\) be an \((n-1)\)-dimensional hyperplane and \(\rho \colon \R^n \to H\) the orthogonal projection onto \(H\). We define \(\widehat{\rho}=\rho \circ f\). The slices \(T_p=\langle T, \widehat{\rho}, p \rangle\),
which are defined for \(\Ha^{n-1}\)-almost every \(p\in H\), are \(1\)-dimensional integral currents and satisfy \(f_\# T_p =\bb{C_p}\), where \(C_p= C \cap \rho^{-1}(p)\) is isometric to a closed interval in \(\R\). We prove that the integral current space \(X_p=(\set T_p, T_p)\) satisfies 
\[
f_\# \bb{X_p}=\bb{C_p}, \quad \mass(\bb{X_p})\leq \Ha^{1}(C_p), \quad \text{and} \quad \mass(\bb{\partial X_p})\leq \Ha^{0}(\partial C_p).
\]
The crucial step of the proof is to show the last inequality concerning the mass of \(\bb{\partial X_p}\). This follows essentially from Lemmas~\ref{lem:mass-of-preimages} and \ref{lem:mass-of-images}, which show that \(f\) is mass-preserving in a certain sense. Thus we may deduce from the case $n=1$ that \(f\) is an isometry $\spt \langle T ,\widehat{\rho}, p \rangle \to \spt \langle \bb{C} ,\rho,p \rangle$ for \(\Ha^{n-1}\)-almost every \(p\in H\).  By Proposition~\ref{lemm:step-1} below this suffices to conclude that \(f\) is an isometry.

To deduce Corollary~\ref{cor:sphere-rigidity} we verify that the assumptions of Theorem~\ref{thm:rigidity-result-current-version} are satisfied for the coning map $Cf\colon CX\to C S^n=B^{n+1}$.

\subsection{Organization}The paper is organized as follows. After reviewing definitions and well established facts concerning measure theory, volume functionals and metric currents in Section~\ref{sec:2}, we prove the basic properties of mass preserving Lipschitz maps needed in this paper in Section~\ref{sec:3}. 

The proof of Theorem \ref{thm:rigidity-result-current-version} is given in Section~\ref{sec:4}, starting with the special 1-dimensional case in Section~\ref{sec:case-n=1}, the reduction to the 1-dimensional case in Section~\ref{sec:4.2}, and the conclusion in Section~\ref{sec:proof-of-main-thm}. Section~\ref{sec:5} is devoted to the proof of Corollary \ref{cor:sphere-rigidity}, while Theorem \ref{thm:main-i} is proved in Section~\ref{sec:6}, with the filling estimate proved in Section~\ref{sec:6.1}, and the rigidity statement in Section~\ref{sec:6.2}. In Section~\ref{sec:7.1} we discuss intrinsic flat convergence, while Section~\ref{sec:7.2} is devoted to the counterexample to the question of Perales and the proof of Corollary~\ref{prop:answer-to-sormani-perales}. Lastly, in Section~\ref{sec:8}, we discuss possible extensions of our results and further counterexamples.

\subsection{Acknowledgements} We would like to thank Alexander Lytchak for bringing Question~\ref{question:Perales-Sormani} to our attention. We are also grateful to Alexander Lytchak, Giacomo Del Nin, Raquel Perales, and Roger Züst for several helpful remarks.

\bigskip

\emph{
After the completion of the first version of this paper, we were informed by Raquel Perales that she and Giacomo Del Nin have obtained a similar result to Theorem \ref{thm:rigidity-result-current-version} independently. In their work \cite{Upcoming} (which has since appeared on the arXiv) they discuss in detail the motivation for Question~\ref{question:Perales-Sormani}, which is to give a direct argument for a gap in the proof of \cite[Theorem 1.3]{sormani-2017}.  See also \cite{sormani-corrigendum-2022} and \cite{allen2020intrinsic, allen2020volume,HLP20}.}


\section{Preliminaries}\label{sec:2}
\subsection{Basic notation and definitions} We write \(\N=\{ 1, 2, \dots\}\) for the set of positive integers. Moreover, we let \(\R^n\) denote the set of \(n\)-tuples of real numbers with the convention that \(\R^0\) consists of exactly one point.  
Let \(X=(X, d)\) be a metric space. We denote by $B_X(x,r)$ (or simply $B(x,r)$) the closed ball of radius $r$ centered at $x$, and by $B^n$ the $n$-dimensional Euclidean unit ball $B_{\mathbb{R}^n}(0,1)$.  Unless otherwise specified, subsets of \(X\) are always endowed the subspace 
metric. We write \(\overline{X}\) for the metric completion of \(X\) and we tacitly identify \(X\) with its canonical isometric copy in \(\overline{X}\). A map \(f\colon X\to Y\) between metric spaces \(X\), \(Y\) is called \emph{\(L\)-Lipschitz}, for some constant \(L\geq 0\), if 
\(
d(f(x), f(y)) \leq L d(x,y)
\)
for all \(x\), \(y\in X\). The smallest \(L\geq 0\) such that \(f\) is \(L\)-Lipschitz is denoted by \(\Lip f\). We use $\LIP(X)$ to denote the set of Lipschitz functions $X\to \R$. A metric space $Y$ is called \emph{injective} if whenever $X$ is a metric space, $A\subset X$ and $f\colon A\to Y$ is a $1$-Lipschitz map, then $f$ can be extended to a $1$-Lipschitz map $\bar{f}\colon X\to Y$.  A well-known theorem of McShane (see e.g. \cite[Theorem 1.27]{brudnyi2012}) states that \(\R\) is an injective metric space. Hence, each of the Banach spaces \(\ell_\infty^n\coloneqq(\R^n, \norm{\cdot}_\infty)\) is injective as well. We say that \(X\) and \(Y\) are \emph{bi-Lipschitz equivalent} if there exists a bijection \(f\colon X \to Y\) such that \(f\) and \(f^{-1}\) are both Lipschitz maps.  A separable metric space \(X\) is called \emph{Lipschitz \(n\)-manifold} if every \(x\in X\) has a closed neighborhood which is bi-Lipschitz equivalent to \(B^n\).

\subsection{Measure theory}

As is common in geometric measure theory, we follow the convention that a \emph{measure}  on  a metric space $X$ is a countably subadditive function $\mu \colon \mathcal{P}(X)\to [0,\infty]$ such that $\mu (\emptyset)=0$. Throughout the forthcoming discussion $\mu$ will always be a measure on $X$. We say that $A\subset X$ is \emph{$\mu$-measurable} if 
\[
\mu(M\cap A)+\mu(M\cap A^c)=\mu(M)
\]
for all $M\subset X$. The collection of all $\mu$-measurable subsets of $X$ forms a $\sigma$-algebra and the restriction of $\mu$ to this $\sigma$-algebra is countably additive. The measure $\mu$ is \emph{Borel} if all Borel subsets of $X$ are $\mu$-measurable. Furthermore, $\mu$ is \emph{Borel regular} if $\mu$ is Borel and for every $M\subset X$ there is a Borel set $A\subset X$ such that $M\subset A$ and $\mu(M)=\mu(A)$. If $X$ is separable and a Borel subset of its completion \(\overline{X}\), then every finite Borel measure $\mu$ on $X$ is \emph{tight}, see \cite[Theorem 3.2]{par67}). The latter means that for every Borel set $A\subset X$ and $\varepsilon >0$, there is a compact set $K\subset A$ with $\mu(A\setminus K)<\varepsilon$. The \emph{support} of $\mu$ is the closed set $\spt \mu$ which consists of those $x\in X$ such that \(\mu(B(x,r))>0\) for every \(r>0\). If $\mu$ is a finite Borel measure then its support is separable. We say that $\mu$ is \emph{concentrated} on $A\subset X$ if $\mu(X\setminus A)=0$. If $X$ is separable, then $\mu$ is concentrated on its support (see \cite[Theorem 2.2.16]{fed69}). Indeed it is consistent with (but not implied by) ZFC that the latter holds true without assuming $X$ to be separable (compare Section~2.1.6 and Theorem~2.2.16 in \cite{fed69}). In particular, if one assumes this additional set-theoretic axiom, finite Borel measures on \emph{every} complete metric space are tight. In \cite{amb00, Bonicatto-Del-Nin-Pasqualetto-2022} this is a standing assumption because there the authors want to treat also currents in non-separable metric spaces $X$, and even for separable $X$ some arguments therein rely on embedding $X$ isometrically into the non-separable Banach space $\ell^\infty$. For the proof of Theorem \ref{thm:rigidity-result-current-version} it can be avoided to assume this axiom. But since we will not justify the use of auxiliary results from the articles \cite{amb00, Bonicatto-Del-Nin-Pasqualetto-2022}, the reader is invited to also consider it an additional standing assumption throughout this paper.

\subsection{Volumes of rectifiable spaces}
\label{subsec:jacobians}
By Caratheodory's criterion (see e.g.\ \cite[p.\ 75]{fed69}) the Hausdorff $n$-measure on~$X$, which will be denoted by $\mathcal{H}^n_X$ (or simply~$\mathcal{H}^n$), is Borel regular. In this paper, following a common convention, we normalize $\Ha^n_X$ so that $\Ha^n_{\R^n}$ equals the Lebsegue measure~$\leb^n$. Hausdorff measures can sometimes be calculated in terms of the so-called area formula. The area formula relies on the following metric version of the Rademacher theorem due to Kirchheim~\cite{kir94}: \emph{if $f\colon A \to X$ is a Lipschitz map from a Borel set $A\subset \mathbb{R}^n$ then, for almost every $p\in A$, there exists a seminorm $\md f_p$ on $\mathbb{R}^n$ such that
\[
d(f(q),f(p))=\md f_p(q-p)+ o\left(|q-p|\right)
\]
as $q\to p$ with $q\in A$.} Now the area formula \cite{kir94} states that the function $x \mapsto \#\{f^{-1}(x)\}$ is $\mathcal{H}^n_X$-measurable and
\begin{equation}
\label{eq:area-formula}
\int_A \Jac^b(\md f_p) \ \textrm{d}p=\int_X \#\left\{f^{-1}(x)\right\}\ \textrm{d}\mathcal{H}^n(x).
\end{equation}
Here $\Jac^b(\sigma)$ denotes the \emph{Busemann Jacobian} of a seminorm $\sigma \colon \mathbb{R}^n\to [0,\infty)$ which is defined as  $\omega_n/ \mathcal{L}^n(B_\sigma)$ where $B_\sigma$ is the unit ball of $\sigma$ and \(\omega_n\coloneqq \leb^{n}(B^n)\). More generally, a map $\Jac^{\bullet}:\Sigma^n\to [0,\infty)$ from the space $\Sigma^n$ of seminorms on $\R^n$ is a \emph{Jacobian} if 
\begin{itemize}
	\item[(i)] $\Jac^\bullet(\abs{\cdot})=1$ for the standard Euclidean norm \(\abs{\cdot}\) on \(\R^n\); 
	\item[(ii)] $\Jac^\bullet(\sigma^\prime)\le \Jac^\bullet(\sigma)$ if $\sigma^\prime\le \sigma$;
	\item[(iii)] $\Jac^\bullet(\sigma\circ T)=|\det T|\Jac^\bullet(\sigma)$ for any linear map $T:\R^n\to\R^n$.
\end{itemize}
The properties above are known as \emph{normalization, monotonicity,} and \emph{transformation law}, respectively. See \cite[Section 4.1]{cre20} and the references therein for a more detailed overview. Two Jacobians which play an important role in this paper are  Gromov's \emph{mass\(*\) Jacobian}, defined by \[
\Jac^{m*}(\sigma)\coloneqq \sup_P \frac{2^n}{\mathcal{L}^n(P)}
\] where the supremum is taken over all parallelepipeds \(P\) containing $B_\sigma$, and Ivanov's \emph{inscribed Riemannian Jacobian}
\[
\Jac^{\ir}(\sigma)\coloneqq\frac{\omega_n}{\leb^n(J(B_\sigma))},
\]
where $J(B_\sigma)\subset \mathbb{R}^n$ is the \emph{John ellipsoid} of $B_\sigma$, that is, the ellipsoid of maximal $\leb^{n}$-measure contained in $B_\sigma$. It follows from John's theorem (compare e.g.\ \cite{Ball92}) that if $\sigma_1\in \Sigma^{n_1}$ and $\sigma_2\in \Sigma^{n_2}$ then 
\begin{equation}
\label{eq:ir-product}
    \Jac^{ir}(\sigma_1\times\sigma_2)=\Jac^{ir}(\sigma_1)\cdot \Jac^{ir}(\sigma_2)
\end{equation}
where $\sigma_1\times \sigma_2 \in \Sigma^{n_1+n_1}$ is defined by $(\sigma_1\times \sigma_2)(v_1,v_2)=\sqrt{\sigma_1(v_1)^2+\sigma_2(v_2)^2}$.

$X$ is called \emph{$n$-rectifiable} if there are Borel subsets $\{A_i\}_{i\in \mathbb{N}}$ of $\mathbb{R}^n$ and bi-Lipschitz embeddings $\{\varphi^i \colon A_i \to X\}_{i\in \mathbb{N}}$ such that $\mathcal{H}^n_X$ is concentrated on $\bigcup_{i\in \mathbb{N}}\varphi^i(A_i)$. Without loss of generality, one can assume additionally that $\varphi^i(A_i)$ and $\varphi^j(A_j)$ are disjoint if $i\neq j$. For $n$-rectifiable spaces, the \emph{density}
\begin{align*}
	\Theta_n(B,x)= \lim_{r\to 0}\frac{\Ha^n(B\cap B(x,r))}{ \omega_n r^n}
\end{align*}
exists and is equal to one for $\Ha^n$-almost every $x\in B$. Morever, by the area formula \eqref{eq:area-formula} the Hausdorff $n$-measure of a Borel set $B\subset X$ is given by
\begin{equation*}
\mathcal{H}^n(B)=\sum_{i\in \mathbb{N}}\ \int_{(\varphi^i)^{-1}(B)} \ \Jac^b(\md \varphi^i_p) \ \textrm{d}p.
\end{equation*}

When $X$ has `non-Euclidean tangents' (i.e.\ the metric differentials $\md \varphi_p^i $ are not necessarily induced by inner products) different Jacobians yield distinct notions of volume. Indeed, every Jacobian $\Jac^{\bullet}$ gives rise to a Borel regular measure $\mu^{\bullet}_X$ on $X$ by setting
\begin{equation}\label{eq:jac_volume}
\mu^{\bullet}_X(B)\coloneqq\sum_{i\in \mathbb{N}}\ \int_{(\varphi^i)^{-1}(B)} \ \Jac^{\bullet}(\md \varphi_p^i) \ \textrm{d}p
\end{equation}
for Borel sets $B\subset X$. It follows from the chain rule for metric differentials and the transformation law (iii) that this does not depend on the choice of the coordinate charts $\{\varphi^i\}$. Furthermore by the normalization axiom (i) of Jacobians and \eqref{eq:area-formula} one always has $\mu^{\bullet}_{\mathbb{R}^n}=\mathcal{H}^n_{\mathbb{R}^n}$. In Section \ref{sec:rectifiable_currents} we will see an analogous construction for the mass measure of rectifiable currents.

\subsection{Metric currents}\label{sec:metric-currents}
Using ideas of De Giorgi \cite{de-giorgi-1995} and extending the classical theory of currents, which goes back to de Rham and Federer--Fleming \cite{de-Rham-1955,fed60}, Ambrosio--Kirchheim  introduced \emph{metric currents} in \cite{amb00}. Variants of their definitions have been proposed and studied by several authors (see \cite{lan11,lang-wenger-2011,williams-2012,zust-2019}). In this paper we follow the original approach of Ambrosio--Kirchheim and review its basic aspects below.

For each \(n\geq 0\) we let \(\mathcal{D}^n(X)\) denote the set of all tuples \((h, \pi_1, \dots, \pi_n)\), where \(h\colon X\to \R\) is a bounded Lipschitz function and \(\pi_i\in \LIP(X)\). 
\begin{definition}\label{def:current}
Let $X$ be a complete metric space. An \((n+1)\)-multilinear map \(T\colon \mathcal{D}^n(X)\to \R\) is called \(n\)-current if the following holds.

\begin{enumerate}
    \item\label{ax:1} \(T(h, \pi_1^{(j)}, \dots, \pi_n^{(j)}) \to T(h, \pi_1, \dots, \pi_n)\) as \(j\to \infty\), whenever \(\pi_i^{(j)}\to \pi_i\) pointwise and \(\Lip \pi_i^{(j)} \leq C\) for some uniform constant \(C\).
    \item\label{ax:2} \(T(h, \pi_1, \dots, \pi_n)=0\) if there is \(i\in \{1, \dots, n\}\) such that \(\pi_i\) is constant when restricted to an open neighborhood of \(\{ x\in X : h(x)\neq 0\}\).
    \item\label{ax:3} There is a finite Borel measure \(\mu\) on \(X\) such that
    \begin{equation}\label{eq:mass-ineq}
            \abs{T(h, \pi_1, \dots, \pi_n)} \leq \prod_{i=1}^n \Lip \pi_i \, \int_X \abs{h} \, d\mu
    \end{equation}
    for all \((h, \pi_1,\dots, \pi_n)\in \mathcal{D}^n(X)\).
\end{enumerate}
\end{definition}

The minimal measure $\mu$ satisfying \eqref{eq:mass-ineq} is called the \emph{mass} of $T$ and is denoted by $\norm{T}$. Any \(n\)-current $T$ extends to a functional $T:L^1(X, \|T\|)\times \LIP(X)^n\to\R$ satisfying \eqref{eq:mass-ineq}. We define \(\mass(T)\coloneqq\norm{T}(X)\), \(\spt T\coloneqq \spt \norm{T}\), and write \(\mass_n(X)\) for the vector space of all \(n\)-currents on \(X\). It is easy to check that \(\mass_n(X)\) becomes a Banach space when it is endowed with the norm \(\mass(\cdot)\). 
There are natural push-forward, restriction and boundary operators on \(\mass_n(X)\), which we recall next.

Every Lipschitz map \(f\colon X\to Y\) between complete metric spaces \(X\), \(Y\) induces a \emph{push-forward} map \(f_\# \colon \mass_n(X) \to \mass_n(Y)\) on the level of currents. Indeed, for every \(T\in \mass_n(X)\) we define
\[
f_\# T(h, \pi_1, \dots, \pi_n)=T(h\circ f, \pi_1\circ f, \dots, \pi_n \circ f)
\]
for all $(h, \pi_1, \dots, \pi_n)\in \mathcal{D}^n(Y)$. In particular we note that 
\begin{equation}
\label{eq:mass-pf-inequality}
\norm{f_{\#}T}\leq (\Lip f)^n f_{\#}\norm{T}.
\end{equation}
If $f\colon X\to Y$ is a Lipschitz map between arbitrary metric spaces, then $f$ extends to a unique Lipschitz map $\bar{f}\colon \overline{X}\to\overline{Y}$. By abuse of notation, we will usually write $f_\#$ instead of $\bar{f}_\#$.

Given an $n$-current $T\in \mass_n(X)$, \(\ell\in \{0, \dots, n\}\), and an $(\ell+1)$-tuple \(\omega=(g, \omega_1, \dots, \omega_{\ell})\), where \(g\colon X\to \R\) is a bounded Borel function and \(\omega_i\in \LIP(X)\), the \emph{restriction} $T\on\omega$ of $T$ by $\omega$ is an \((n-\ell)\)-current defined by
\[
T \on \omega \,(h, \pi_1, \dots, \pi_{n-\ell})=T(h \, g, \omega_1, \dots, \omega_{\ell}, \pi_1, \dots, \pi_{n-\ell})
\]
for all \((h, \pi_1, \dots, \pi_{n-\ell})\in \mathcal{D}^{n-\ell}(X)\). In particular, \(T\on A \coloneqq T \on \mathbbm{1}_A\), is a well-defined \(n\)-current for any Borel set \(A\subset X\).

Finally, if \(n\geq 1\) the \emph{boundary} \(\partial T\) of \(T\in \mass_n(X)\) is the \(n\)-multilinear map \(\partial T \colon \mathcal{D}^{n-1}(X)\to \R\) defined by
\[
\partial T(h, \pi_1, \dots, \pi_{n-1})=T(1, h, \pi_1, \dots, \pi_{n-1}).
\]
We say that \(T\) is a normal current if $\partial T\in \mass_{n-1}(X)$. The vector space of all normal \(n\)-currents on \(X\) is denoted by \(\mathbf{N}_n(X)\) and we set \(\mathbf{N}_0(X)=\mass_0(X)\). The spaces $\mathbf{N}_n(X)$ equipped with the norm \(\mathbf{N}(T)=\mass(T)+\mass(\partial T)\) are Banach spaces, with the convention \(\mathbf{N}(T)=\mass(T)\) if \(T\in \mathbf{N}_0(X)\).

\subsection{Rectifiable currents and their Finsler mass}\label{sec:rectifiable_currents}
Every \(\theta \in L^1(\R^n)\) induces an \(n\)-current \(\bb{\theta}\in \mass_n(\R^n)\) given by
\[
\bb{\theta}(h,\pi_1 \dots, \pi_n)=\int_{\R^n} \theta h  \det [\partial_i \pi_j]^n_{i, j=1} \, d \mathcal{H}^n
\]
for all \((h, \pi_1, \dots, \pi_n)\in \mathcal{D}^n(\R^n)\). We say that $T\in \mass_n(X)$ is \emph{rectifiable} (resp.\ \emph{integer-rectifiable}) if there are compact sets $K_i \subset \mathbb{R}^n$, functions $\Theta_i \in L^1(\R^n)$ (resp.\ $\Theta_i \in L^1(\R^n; \mathbb{Z})$) 
with $\spt \Theta_i \subset K_i$ and bi-Lipschitz embeddings $\varphi_i \colon K_i \to X$ such that
\begin{equation}\label{eq:rep-rect-current}
T=\sum_{i\in \N} \varphi_{i\#} \bb{\Theta_i} \ \ \textnormal{and} \ \ \mass(T)=\sum_{i\in \N} \mass \left(\varphi_{i\#} \bb{\Theta_i}\right).
\end{equation}
We denote by $\mathcal{R}_n(X)$ (resp. $\mathcal{I}_n(X)$) the collection of all rectifiable (resp. integer-rectifiable) currents on $X$. The mass of a rectifiable current $T$ has the following very concrete interpretation in terms of the Gromov mass* volume $\mu^{m*}$,
\begin{equation}
\label{eq:mass*-characterization}
\norm{T}(A)=\sum_{i\in \N} \int_{A\cap \varphi_i(K_i)} \left|\Theta_i \circ \varphi^{-1}_i (x)\right|\ \textrm{d}\mu^{m*}(x)
\end{equation}
for every Borel set \(A\susbet X\) (see, for example, \cite[Lemma~2.5(2)]{zust2021riemannian}).  More generally, given a Jacobian $\Jac^\bullet$ and the associated volume measure $\mu_X^\bullet$, the \emph{Finsler mass measure} $\|T\|^\bullet$ is defined by 
\begin{equation}
\label{eq:Jacobian-mass-characterization}
\norm{T}^\bullet(A)\coloneqq\sum_{i\in \N} \int_{A\cap \varphi_i(K_i)} \abs{\Theta_i\circ \varphi_i\inv(x)} \ \textrm{d}\mu_X^\bullet
\end{equation}
for every Borel subset \(A\susbet X\). It can be shown that this definition is independent of the chosen representation \eqref{eq:rep-rect-current}. It moreover satisfies natural estimates e.g. \(\norm{f_\#T}^\bullet\le (\Lip f)^n\norm{T}^{\bullet}\) for every Lipschitz map \(f\colon X \to Y\), and is comparable to the usual mass measure
\begin{align*}
\norm{T}^{\bullet} \leq \norm{T}^{\ir},\quad\textrm{and}\quad C\inv\cdot\norm{T}\le \norm{T}^{\bullet}\le C\cdot\norm{T}
\end{align*}
for a constant $C>0$ depending only on $n$, see \cite[Lemma~2.5]{zust2021riemannian}. In particular, one has \(\spt \norm{T}^\bullet=\spt T\). We call \(\mass^\bullet(T)\coloneqq \norm{T}^\bullet(X)\) the \emph{Finsler mass} associated to the Jacobian $\Jac^\bullet$.

We remark that $T\in \mathcal R_n(X)$ if and only if $\|T\|$ is concentrated on an $n$-rectifiable set and $\|T\|\ll \Ha^n$. Moreover, 
if \(T\in \mathcal{R}_n(X)\), then \(\norm{T}\) is concentrated on the \(n\)-rectifiable set
\begin{equation}\label{eq:characteristic-set}
\set T = \Bigl\{ x\in X : \liminf_{r\downarrow 0} \frac{\norm{T}(B(X, r))}{\omega_n r^n} >0\Bigr\},
\end{equation}
and any Borel set \(A\susbet X\) on which \(\norm{T}\) is concentrated contains \(\set T\) up to a \(\mathcal{H}^n\)-negligible set (see \cite[Theorem 4.6]{amb00} ). The set defined in \eqref{eq:characteristic-set} is called \emph{characteristic set} of \(T\).

\subsection{Slicing}\label{sec:slicing}
Let \(T\in \mathcal R_n(X)\)  and \(\rho \colon X\to \R^k\) be a Lipschitz map with \(k\in \{1, \dots, n\}\). In \cite[Theorems 5.6 and 5.7]{amb00}, Ambrosio and Kirchheim show that there is a natural slicing operator \(\R^k\ni p \mapsto \langle T, \pi, p\rangle \in \mathcal R_{n-k}(X)\) which is defined for \(\mathcal{H}^k\)-almost every \(p\in \R^k\). Each slice \(\langle T, \rho, p\rangle\) is concentrated on \(\spt T\cap  \rho^{-1}(p)\), and for every \(\psi \in  C_c(\R^k)\),
\begin{equation}\label{eq:universal-property}
\int_{\R^k} \langle T, \rho, p\rangle \, \psi(p) \, dp= T \on (\psi \circ \rho, \rho_1, \dots, \rho_k),
\end{equation}
where \(\rho_i\) denotes the \(i\)th coordinate function of \(\rho\).  Moreover, 
\begin{equation}\label{eq:mass-of-slices}
\int_{\R^k} \norm{\langle T, \rho, p\rangle} \, dp= \norm{T \on (1, \rho)},
\end{equation}
where \((1, \rho)\) is shorthand for \((1, \rho_1, \dots, \rho_k)\). In particular, the following slicing inequality holds true,
\[
\int_{\R^k} \mass{(\langle T, \rho, p\rangle)} \, dp \leq (\Lip \rho)^k \mass(T). 
\]
These properties uniquely characterize the slices \(\langle T, \rho, p\rangle\). Indeed, if \(T^p\in \mass_{n-k}(X)\) are concentrated on \(L \cap \rho^{-1}(p)\) for some \(\sigma\)-compact set \(L\), satisfy \(\int_{\R^k} \mass(T^p) \, dp < \infty\) and \eqref{eq:universal-property}, then \(T^p=\langle T, \rho, p\rangle\) for \(\Ha^k\)-almost every \(p\in \R^k\). Hence, for example, one has that the slicing and the push-forward operator commute. More concretely, if \(f\colon X\to Y\) and \(\rho\colon Y \to \R^k\) are Lipschitz maps, then
\begin{equation}\label{eq:slice-and-push-forward-commute}
f_\# \langle T, \widehat{\rho}, p \rangle=\langle f_\# T, \rho, p\rangle
\end{equation}
for \(\Ha^k\)-almost every \(p\in \R^k\), where \(\widehat{\rho}=\rho \circ f\).  

Naively one might suspect that \( \spt \langle T, \rho, p\rangle = \spt T\cap  \rho^{-1}(p)\) up to a set of \(\Ha^{n-k}\)-measure zero. However, as the following well-known example shows this cannot be true in general.
\begin{example}\label{ex:spt-vs-set}
Fix \(n\geq 1\) and let \(\{ x_i : i\in \N\}\) be a dense subset of \(\R^{n+1}\). Further,  let \((r_i)_{i\in \N}\) be a sequence of positive real numbers such that \(\sum_{i\in \N} r_i^n\) is finite. We put \(T_i=\partial \bb{B(x_i, r_i)}\) and \(T=\sum_{i\in\N} T_i\). By construction, \(T\) is an integer-rectifiable \(n\)-current. Moreover, it is easy to check that \(\spt T=\R^{n+1}\), and thus 
\[
\spt T \cap  \rho^{-1}(p)\cong \R
\]
for every orthogonal projection \(\rho\) onto a hyperplane. But \(\langle T, \rho, p\rangle\) is an integer-rectifiable \(0\)-current for \(\mathcal{H}^n\)-almost every \(p\in \R^n\). In particular, the support of \(\langle T, \rho, p\rangle\) consists of finitely many points, and so it cannot be equal to  \(\spt T\cap  \rho^{-1}(p)\) up to a set of \(\mathcal{H}^0\)-measure zero.
\end{example}

The following lemma shows that such an equality is true if instead of \(\spt \langle T, \rho , p \rangle\) and \(\spt T\) the corresponding characteristic sets are considered. 

\begin{lemma}\label{lem:support-of-sliced-measure-equals-slice}
If \(T\in \mathcal R_n(X)\) and \(\rho \colon X\to \R^k\) with \(k\leq n\) is a Lipschitz map, then up to a set of \(\Ha^{n-k}\)-measure zero
\begin{equation}\label{eq:support-of-slices}
\set\, \langle T, \rho, p \rangle = \set T \cap \rho^{-1}(p)
\end{equation}
for \(\Ha^k\)-almost every \(p\in \R^k\).
\end{lemma}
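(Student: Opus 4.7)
\emph{Proof plan.} The strategy is to establish both inclusions in \eqref{eq:support-of-slices} modulo $\mathcal{H}^{n-k}$-null sets. First note that both sides are $(n-k)$-rectifiable for $\mathcal{H}^k$-a.e.\ $p$: the left-hand side by definition as the characteristic set of a rectifiable $(n-k)$-current, and the right-hand side via the classical coarea theorem for rectifiable sets applied to the $n$-rectifiable set $\set T$.

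For the inclusion $\set\langle T,\rho,p\rangle \subseteq \set T\cap\rho^{-1}(p)$, I would apply identity \eqref{eq:mass-of-slices} with the Borel set $X\setminus \set T$. Since $\|T\|$ is concentrated on $\set T$, this yields
\[
\int_{\R^k}\|\langle T,\rho,p\rangle\|(X\setminus\set T)\,dp = \|T\on(1,\rho)\|(X\setminus\set T) \leq (\Lip\rho)^k\|T\|(X\setminus \set T)=0,
\]
so $\|\langle T,\rho,p\rangle\|(X\setminus \set T)=0$ for $\mathcal{H}^k$-a.e.\ $p$. Combined with the fact that $\langle T,\rho,p\rangle$ is concentrated on $\rho^{-1}(p)$, this shows that $\|\langle T,\rho,p\rangle\|$ is concentrated on the $(n-k)$-rectifiable set $\set T\cap\rho^{-1}(p)$. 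The characterization of the characteristic set recalled after \eqref{eq:characteristic-set} then yields the inclusion.

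The reverse inclusion is the nontrivial direction. I would fix a chart representation $T=\sum_i \varphi_{i\#}\bb{\Theta_i}$ as in \eqref{eq:rep-rect-current} with pairwise disjoint images $\varphi_i(K_i)$. Then $\set T$ agrees modulo $\mathcal{H}^n$-null sets with $\bigsqcup_i\varphi_i(\{\Theta_i\neq 0\})$, and by \eqref{eq:slice-and-push-forward-commute},
\[
\langle T,\rho,p\rangle = \sum_i \varphi_{i\#}\langle\bb{\Theta_i},\rho\circ\varphi_i,p\rangle
\]
for $\mathcal{H}^k$-a.e.\ $p$. Since the $\varphi_i$ are bi-Lipschitz with disjoint images, taking characteristic sets and pushing forward reduces the problem to the Euclidean statement: for $\Theta\in L^1(\R^n)$ with compact support and a Lipschitz map $\sigma\colon\R^n\to\R^k$, the characteristic set of $\langle\bb{\Theta},\sigma,p\rangle$ coincides with $\{\Theta\neq 0\}\cap\sigma^{-1}(p)$ modulo $\mathcal{H}^{n-k}$-null sets for $\mathcal{H}^k$-a.e.\ $p$. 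I would establish this classical fact using the explicit formula for slices of Euclidean rectifiable currents combined with the coarea formula applied to $\sigma|_{\{\Theta\neq 0\}}$.

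The main obstacle lies in this final Euclidean step: one must verify that the $(n-k)$-density of $\|\langle\bb{\Theta},\sigma,p\rangle\|$ is strictly positive $\mathcal{H}^{n-k}$-almost everywhere on $\{\Theta\neq 0\}\cap\sigma^{-1}(p)$ for $\mathcal{H}^k$-a.e.\ $p$. The remaining steps are fairly mechanical consequences of the slicing identities recorded in Section~\ref{sec:slicing}.
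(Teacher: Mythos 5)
Your proof of the inclusion $\set\langle T,\rho,p\rangle\subseteq \set T\cap\rho^{-1}(p)$ is correct and pleasingly direct: applying \eqref{eq:mass-of-slices} to the $\|T\|$-null set $X\setminus\set T$ and then invoking the minimality of $\set$ among concentration sets is a clean argument that does not appear in the paper. The paper obtains both inclusions at once by a different route: after embedding into $\ell^\infty$, it writes $T=\bb{S,\theta,\tau}$ via \cite[Theorem~9.1]{amb00}, identifies $S$ with $\set T$ modulo $\mathcal H^n$-null sets via \cite[Theorem~9.5]{amb00}, propagates this to the slices via the coarea inequality, and then applies \cite[Theorem~9.7]{amb00}, which says directly that $\langle T,\rho,p\rangle=\bb{S\cap\rho^{-1}(p),\theta,\tau_p}$ with the \emph{same} multiplicity function $\theta$; the lemma then falls out immediately.

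Your reverse inclusion is where the argument is incomplete, and you correctly identified the obstacle yourself. After the chart decomposition and the reduction to the Euclidean statement about $\langle\bb{\Theta},\sigma,p\rangle$ and $\{\Theta\neq 0\}\cap\sigma^{-1}(p)$, you still owe the reader the fact that the slice's multiplicity on $\sigma^{-1}(p)$ is exactly (the trace of) $\Theta$, which is what guarantees positive $(n-k)$-density wherever $\Theta\neq 0$. This is not a routine consequence of the coarea formula alone; it is precisely the content of \cite[Theorem~9.7]{amb00} (or of its Euclidean ancestor in Federer). You also tacitly use that slicing commutes with the countable sum $T=\sum_i\varphi_{i\#}\bb{\Theta_i}$, which is true but requires the small argument that $\sum_i\int\mass\langle\varphi_{i\#}\bb{\Theta_i},\rho,p\rangle\,dp<\infty$ together with the uniqueness characterization of slices from Section~\ref{sec:slicing}. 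Once you either cite \cite[Theorem~9.7]{amb00} or supply the Euclidean density computation, your two-sided proof goes through; as it stands, the forward inclusion is complete but the reverse inclusion leaves its key step as a black box.
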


\begin{proof}
Without loss of generality we may assume that \(X=\spt T\).  
Furthermore, using Kuratowski's embedding, that \(X\subset \ell^\infty\) and hence \(T\in \mathcal{R}_n(Y)\), where \(Y= \ell^\infty\). In what follows, we combine different results from \cite{amb00} to obtain the desired equality. 
By \cite[Theorem~9.1]{amb00} there exist a \(\Ha^n\)-rectifiable set \(S\subset Y\), a Borel function \(\theta\colon S \to \R\), and an orientation \(\tau\) of \(S\) such that \(T=\bb{S, \theta, \tau}\). 

Now, \cite[Theorem~9.5]{amb00} implies that \(S\) and \(\set T\) are equal up to \(\Ha^n\)-negligible sets, that is,  \(S\cup N_1= \set T \cup N_2\), where \(\Ha^n(N_i)=0\). 
Because of the coarea inequality \cite{EH21}, \(\Ha^{n-k}( N_i \cap \rho^{-1}(p))=0\) for \(\Ha^k\)-almost every \(p\in \R^k\), and so for any such \(p\) we have \(S\cap \rho^{-1}(p)=\set T \cap \rho^{-1}(p)\) up to a set of \(\Ha^{n-k}\)-measure zero. 
By virtue of \cite[Theorem~9.7]{amb00}, for \(\Ha^k\)-almost every \(p\in \R^k\) there exists an orientation \(\tau_p\) of \(S\cap \rho^{-1}(p)\) such that \(\langle T, \rho, p \rangle = \bb{S \cap \rho^{-1}(p), \theta, \tau_p}\). Hence, up to a set of \(\Ha^{n-k}\)-measure zero, \(\set\, \langle T, \rho, p \rangle = S\cap \rho^{-1}(p)=\set T \cap \rho^{-1}(p)\), as desired. 
\end{proof}

Notice that if $k=n$ then \eqref{eq:support-of-slices} is an actual equality, since the empty set is the only \(\mathcal{H}^0\)-null set.

\subsection{Integral current spaces}
The space of integral currents \(\bI_n(X)\) is defined as
\[
\bI_n(X)=\mathcal{I}_n(X) \cap \mathbf{N}_n(X).
\]
Integral currents are the most important class of currents in this article. The seminal boundary-rectifiability theorem \cite[Theorem 8.6]{amb00} states that $\partial T\in \bI_{n-1}(X)$ whenever $T\in \bI_n(X)$ and \(n\geq 1\). We also remark that \(\bI_n(X)\) is a closed additive subgroup of \(\mathbf{N}_n(X)\) for every \(n\geq 0\) and, if \(T\in \bI_n(X)\), then \(f_\# T\in \bI_n(Y)\) for every Lipschitz map \(f\colon X\to Y\). Moreover, for any Lipschitz map $\rho:X\to \R^k$ one has $\langle T,\rho,p\rangle\in \bI_{n-k}(X)$ for \(\Ha^k\)-almost every $p\in \R^k$. The following definition is due to Sormani and Wenger (see \cite[Definition 2.46]{sw11}).

\begin{definition}[Integral current space]\label{def:int_current_space}
A pair \((X, T)\) is called \(n\)-dimensional integral current space if \(X\) is a metric space and \(T\in \bI_n(\overline{X})\) is such that \(\set T =X\). The current \(T\) is often denoted by \(\bb{X}\) and we generally do not emphasize the dependence of the integral current space \((X, T)\) on \(T\) and denote it only by \(X\).
\end{definition}

To any integral current space \((X, T)\), one can naturally associate a boundary \(\partial X=(\set \partial T, \partial T)\), which is also an integral current space. Prime examples of integral current spaces are compact connected orientable Lipschitz manifolds.
\begin{example}
Let $M$ be a compact orientable connected Lipschitz $n$-manifold. Every such manifold admits a finite atlas of bi-Lipschitz maps $\psi_i \colon U_i \to M$ where $U_i\subset \R^{n-1}\times [0,\infty)$ are open and the a.e.\ defined differentials of the coordinate transitions $\psi_j^{-1}\circ \psi_i$ are orientation preserving. By choosing a subordinate Lipschitz partition of unity and defining it locally in the charts, one can as in the smooth case integrate Lipschitz differential forms $h \, d\pi_1 \wedge \dots \wedge d\pi_n$. In particular one obtains an (up to sign) uniquely defined fundamental integer-rectifiable current $\bb{M}\in \mathcal{I}_n(M)$. Furthermore, by the Lipschitz version of Stokes' theorem, the manifold boundary of $M$ coincides with the current boundary of $\bb{M}$. That is $\partial \bb{M}= \bb{\partial M}$ and hence $\bb{M}\in \bI_k(M)$. Finally, since $\norm{\bb{M}}=\mu^{m*}_M$, and $M$ is locally bi-Lipschitz equivalent to an open set in $\R^{n-1}\times [0,\infty)$, we deduce that $\set  \bb{M}=M$. Hence, $(M,\bb{M})$ is an integral current space.
\end{example}

Every convex body \(C\subset \R^n\) is a compact connected oriented Lipschitz \(n\)-manifold. In particular, \(\bb{C}\) is an integral \(n\)-current,  and so the term \(\bb{\partial C}\) appearing in Theorem~\ref{thm:rigidity-result-current-version} is a well-defined integral \((n-1)\)-current and satisfies \(\bb{\partial C}= \partial \bb{C}\). 

\section{Mass preserving \(1\)-Lipschitz maps}\label{sec:3}
In this section we make some simple general observations concerning mass preserving $1$-Lipschitz maps. The first of these is the following.
\begin{lemma}
\label{lem:mass-of-preimages}
Let $X,Y$ be complete metric spaces, $f\colon X\to Y$ be $1$-Lipschitz and $T\in \mathcal{R}_n(X)$. If $\mass(T)\leq \mass(f_\# T)$ then 
\begin{equation}
\label{eq:mass-of-preimages}
    \norm{f_\# T}(A)=\norm{T}(f^{-1}(A))
\end{equation}
for every Borel set $A\subset Y$. Furthermore,
\begin{equation}\label{eq:second-part}
f(\spt T)\subset \spt f_{\#}T \quad \textnormal{and}\quad \norm{f_{\#}T}(Y\setminus f(\set T))=0.
\end{equation}
\end{lemma}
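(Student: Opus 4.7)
The plan is to first establish the identity \eqref{eq:mass-of-preimages} by combining the basic push-forward estimate \eqref{eq:mass-pf-inequality} with the mass bound in the hypothesis, and then to deduce both support statements in \eqref{eq:second-part} directly from \eqref{eq:mass-of-preimages}.

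Since \(\Lip f\le 1\), the inequality \eqref{eq:mass-pf-inequality} becomes the inequality of Borel measures \(\norm{f_\# T}\le f_\#\norm{T}\) on \(Y\), so that
\[
\norm{f_\# T}(A)\le \norm{T}(f^{-1}(A))
\]
for every Borel set \(A\subset Y\). Taking \(A=Y\) gives \(\mass(f_\# T)\le \mass(T)\), which together with the hypothesis \(\mass(T)\le \mass(f_\# T)\) forces equality of the total masses. Applying the displayed inequality to both \(A\) and \(Y\setminus A\), summing the two estimates, and noting that the sum already equals \(\mass(f_\#T)=\mass(T)\), I obtain that both inequalities must be equalities. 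This is exactly \eqref{eq:mass-of-preimages}.

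For the first assertion in \eqref{eq:second-part}, fix \(x\in \spt T\) and set \(y=f(x)\). Because \(f\) is \(1\)-Lipschitz, the closed ball \(B_X(x,r)\) lies inside \(f^{-1}(B_Y(y,r))\) for every \(r>0\), and hence by \eqref{eq:mass-of-preimages},
\[
\norm{f_\# T}(B_Y(y,r))=\norm{T}(f^{-1}(B_Y(y,r)))\ge \norm{T}(B_X(x,r))>0,
\]
so \(y\in \spt \norm{f_\# T}=\spt f_\# T\).

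The second assertion in \eqref{eq:second-part} is the only step that requires a small measurability remark: in general \(f(\set T)\) need not be Borel. To sidestep this, I use the representation \eqref{eq:rep-rect-current} of \(T\) as a countable sum \(T=\sum_i \varphi_{i\#}\bb{\Theta_i}\), where each \(\varphi_i\colon K_i\to X\) is bi-Lipschitz and \(K_i\subset \R^n\) is compact. By \eqref{eq:mass*-characterization} the measure \(\norm{T}\) is concentrated on the \(\sigma\)-compact set \(K\coloneqq \bigcup_i \varphi_i(K_i)\), which lies in \(\set T\) up to an \(\Ha^n\)-null (and hence \(\norm{T}\)-null) set. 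Then \(f(K)\) is \(\sigma\)-compact, in particular Borel, and \eqref{eq:mass-of-preimages} applied to \(A=Y\setminus f(K)\) yields
\[
\norm{f_\# T}(Y\setminus f(K))=\norm{T}(X\setminus f^{-1}(f(K)))\le \norm{T}(X\setminus K)=0.
\]
Since \(f(K)\subset f(\set T)\), this gives \(\norm{f_\# T}(Y\setminus f(\set T))=0\) by monotonicity of the outer measure.

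I do not anticipate a serious obstacle here; the only subtle point is the measurability of \(f(\set T)\), which I circumvent by replacing \(\set T\) by a \(\sigma\)-compact full-measure subset.
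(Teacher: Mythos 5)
Your proof is correct and follows essentially the same route as the paper: derive $\norm{f_\# T}(A)\le\norm{T}(f^{-1}(A))$ from the $1$-Lipschitz push-forward bound, force equality by applying it to $A$ and its complement, and read off the two support statements. Using closed balls rather than open neighbourhoods for the first part of \eqref{eq:second-part} is a cosmetic variation, since the paper's $\spt\mu$ is defined via closed balls anyway.

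The one substantive difference is your handling of the last assertion. The paper applies \eqref{eq:mass-of-preimages} directly to $A=Y\setminus f(\set T)$, but $f(\set T)$ is only an analytic (Souslin) set in general, not a priori Borel, so this step is slightly informal: \eqref{eq:mass-of-preimages} was established only for Borel $A$. Your replacement of $\set T$ by a $\sigma$-compact subset $K$ of full $\norm{T}$-measure coming from the rectifiable representation \eqref{eq:rep-rect-current}, followed by outer-measure monotonicity, cleanly sidesteps this. That is a genuine improvement in rigor, at the cost of invoking the concrete structure of rectifiable currents rather than staying at the level of abstract mass measures.
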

\begin{proof}
The inequality $\norm{f_\# T}(A)\leq \norm{T}(f^{-1}(A))$ is readily implied by the map $f$ being $1$-Lipschitz, the characterization of the mass measure given in \cite[Proposition~2.7]{amb00} and the definition of the push-forward. 
Applying this inequality to $A$ and $Y\setminus A$, we obtain
\begin{align*}
\mass(f_\# T) &=\norm{f_\# T}(A)+ \norm{f_\# T}(Y \setminus A)\\
&\leq \norm{T}(f^{-1}(A))+ \norm{T}(X \setminus f^{-1}(A))=\mass (T).
\end{align*}
By our assumption $\mass (T)\leq \mass (f_\# T)$, this inequality chain must be rigid and hence \eqref{eq:mass-of-preimages} follows. 

To finish the proof, we show \eqref{eq:second-part}. If $y=f(x)$ with $x\in \spt T$ and $U$ is an open neighbourhood of $y$ then $f^{-1}(U)$ is an open neighbourhood of $x$ and hence \[
\norm{f_{\#}T}(U)=\norm{T}(f^{-1}(U)>0.
\]
In particular, it follows that $f(\spt T)\subset \spt f_{\#}T$. Finally,
\[\norm{f_{\#}T}(Y\setminus f(\set T))=\norm{T}(X\setminus \set T)=0\]
which completes the proof.
\end{proof}

\begin{lemma}
\label{lem:mass-of-images}
Let $X$ be a complete metric space, $f\colon X\to \mathbb{R}^N$ be $1$-Lipschitz and $T\in \mathcal{I}_n(X)$ be such that $f_\# T=\bb{M}$ where $M\subset \R^N$ is a compact $n$-dimensional Lipschitz manifold. If further $\mass (T)\leq \mathcal{H}^n(M)$, then for every Borel set $A\subset X$, it follows that \(f(A\cap \set T)\) is \(\mathcal{H}^n\)-measurable with
\begin{equation}
    \mathcal{H}^n(f(A\cap \set T))=\norm{T}(A)
\end{equation}
and 
\begin{equation}
    \mathcal{H}^0(f^{-1}(p)\cap \set T)=1
\end{equation}
for $\mathcal{H}^n$-almost every $p\in M$.
\end{lemma}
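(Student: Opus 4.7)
My strategy is to combine Lemma~\ref{lem:mass-of-preimages} with Kirchheim's area formula to extract a rigid chain of inequalities whose simultaneous collapse yields both conclusions. First I would note that $M\subset \R^N$ with its subspace metric has Euclidean metric differentials on every bi-Lipschitz chart; hence all Jacobians in $\Sigma^n$ coincide on them and $\mass(\bb{M})=\Ha^n(M)$. The hypothesis $\mass(T)\leq \Ha^n(M)$ is then exactly the hypothesis of Lemma~\ref{lem:mass-of-preimages}, from which I would obtain $\norm{T}(f^{-1}(B))=\Ha^n(M\cap B)$ for every Borel $B\subset \R^N$. This delivers two pieces I would rely on below: $\mass(T)=\Ha^n(M)$, and via \eqref{eq:second-part}, the multiplicity $N(y):=\Ha^0(f^{-1}(y)\cap \set T)$ satisfies $N\geq 1$ for $\Ha^n$-a.e.\ $y\in M$.

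The core of the argument is the following rigidity chain. Fix an integer-rectifiable representation $T=\sum_i\varphi_{i\#}\bb{\Theta_i}$ with pairwise disjoint images exhausting $\set T$ up to a null set, and set $g_i:=f\circ \varphi_i\colon K_i\to M$. Since $f$ is $1$-Lipschitz, $\md g_{i,p}\leq \md \varphi_{i,p}$ and $\md g_{i,p}$ is Euclidean; writing $\theta\geq 1$ for the integer-valued multiplicity on $\set T$, applying Kirchheim's area formula \eqref{eq:area-formula} to each $g_i$ and summing would yield
\begin{equation*}
\Ha^n(M) \leq \int_M N\,d\Ha^n = \sum_i \int_{K_i} \Jac^b(\md g_{i,p})\,dp \leq \sum_i \int_{K_i}\theta(\varphi_i(p))\,\Jac^{m*}(\md \varphi_{i,p})\,dp = \mass(T) \leq \Ha^n(M),
\end{equation*}
using monotonicity of $\Jac^{m*}$ and the identity $\Jac^b=\Jac^{m*}$ on Euclidean seminorms for the middle step, and \eqref{eq:mass*-characterization} for the identification of $\mass(T)$. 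Equality throughout then forces $N\equiv 1$ $\Ha^n$-a.e.\ on $M$---the second conclusion of the lemma---together with $\theta\equiv 1$ on $\set T$ and $\Jac^b(\md g_{i,p})=\Jac^{m*}(\md \varphi_{i,p})$ $\Ha^n$-a.e.

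To close, for a Borel set $A\subset X$ the same area formula applied to $g_i|_{\varphi_i^{-1}(A)}$, combined with these rigidity identities, would yield
\begin{equation*}
\norm{T}(A) = \sum_i \int_{\varphi_i^{-1}(A)} \Jac^b(\md g_{i,p})\,dp = \int_M \Ha^0(f^{-1}(y)\cap A\cap \set T)\,d\Ha^n(y),
\end{equation*}
and essential injectivity collapses the integrand to $\mathbbm{1}_{f(A\cap\set T)}$ $\Ha^n$-a.e., while $\Ha^n$-measurability of $f(A\cap\set T)$ follows from its analyticity. The main obstacle I anticipate is orchestrating the three disparate ingredients of the rigidity chain---$N\geq 1$ from Lemma~\ref{lem:mass-of-preimages}, the pointwise Jacobian comparison, and $\theta\geq 1$ from integrality---so that they collapse simultaneously; this is possible only because of the sharp equality $\mass(T)=\Ha^n(M)$ supplied by Lemma~\ref{lem:mass-of-preimages}.
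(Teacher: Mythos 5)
Your proof is correct and uses essentially the same ingredients as the paper's: Lemma~\ref{lem:mass-of-preimages} to get $\Ha^n(M\setminus f(\set T))=0$, the area formula applied to the compositions $f\circ\varphi_i$, the Jacobian monotonicity $\Jac^b(\md(f\circ\varphi_i)_p)\leq\Jac^{m*}(\md\varphi_{i,p})$, the mass\(\ast\) characterization of $\norm{T}$, and the lower bound $|\Theta_i|\geq 1$ from integrality. The only difference is organizational---you assemble one global rigidity chain with $A=X$ up front and then localize, whereas the paper proves the local inequality $\Ha^n(f(A\cap S))\leq\norm{T}(A)$ first and deduces rigidity by splitting $X$ into $A$ and $X\setminus A$---but these are the same argument in a different order.
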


Naively one might hope that $f$ preserves the mass of all Borel subsets $A\subset X$ even in the more general setting of Lemma~\ref{lem:mass-of-preimages}. There are however two obstacles. The more obvious one is that multiplicities might add up. This is excluded here by assuming $T$ to be integral and that the push-forward current has 'multiplicity one'. The more subtle one is that $\spt T\setminus \set T$ is always a $\norm{T}$-nullset but might in general have positive $\mathcal{H}^n$-measure (see Example~\ref{ex:spt-vs-set}). In particular we cannot exclude the possibillity that the image of this set does have positive $\mathcal{H}^n$-measure.

\begin{proof}[Proof of Lemma~\ref{lem:mass-of-images}]
We may suppose that \(X= \spt T\). In particular, it then follows from Lemma~\ref{lem:mass-of-preimages} that \(f(X)\subset M\). Since \(T\in \mathcal{I}_n(X)\), there are Borel sets $B_i\subset \mathbb{R}^n$, bi-Lipschitz embeddings $\varphi_i \colon B_i\to X$ and Borel functions $\Theta_i \colon B_i \to \mathbb{Z}\setminus\{0\}$ such that \(\set T\) and $S=\bigcup_{i\in \mathbb{N}}\varphi_i(B_i)$ are equal up to a set of \(\mathcal{H}^n\)-measure zero, 
\begin{equation}\label{eq:decomposition-1}
    T=\sum_{i\in \mathbb{N}} \varphi_{i\#} \bb{\Theta_i} \quad \textnormal{and} \quad \norm{T}=\sum_{i\in \mathbb{N}}\, \norm{\varphi_{i\#} \bb{\Theta_i}}.
\end{equation}
 Since $\mathcal{H}^0$ is the counting measure, one has for every Borel set $A\subset X$ that
\begin{align}\label{eq:x}
    \mathcal{H}^n(f(A\cap S))&\leq \int_{\R^N} \mathcal{H}^0\left( f^{-1}(p)\cap A \cap S \right)\, \mathrm{d}\mathcal{H}^n(p) \nonumber \\
    &\leq\sum_{i\in \mathbb{N}}\int_{M} \mathcal{H}^0\left( f^{-1}(p)\cap A \cap \varphi_i(B_i)\right)\, \mathrm{d}\mathcal{H}^n(p).
\end{align}
Moreover, using the area formula, that the metric differentials $\md (f\circ \varphi_i)_q$ are almost everywhere Euclidean and the monotonicity of Jacobians, we get 
\begin{align}\label{eq:xx}
 \int_{M}\mathcal{H}^0\left( f^{-1}(p)\cap A \cap \varphi_i(B_i)\right)\, \mathrm{d}\mathcal{H}^n(p)&\leq \int_{\varphi^{-1}_i(A)\,\cap \,B_i}  \Jac^{b}(\md (f\circ \varphi_i)_q)\ \textrm{d}q \nonumber \\
 &=\int_{\varphi^{-1}_i(A)\,\cap \,B_i}  \Jac^{m*}(\md (f\circ \varphi_i)_q)\ \textrm{d}q \nonumber\\
    &\leq  \int_{\varphi^{-1}_i(A)\,\cap \,B_i}  \Jac^{m*}(\md (\varphi_i)_q) \ \textrm{d}q 
\end{align} 
for every \(i \in \N\). Therefore, using that \( \abs{\Theta_i(q)} \geq 1\) for all \(q\in B_i\), we arrive at
\begin{align} \label{eq:measure_of_images}
    \mathcal{H}^n(f(A\cap S))&\leq \sum_{i\in \mathbb{N}} \int_{\varphi^{-1}_i(A)\,\cap \,B_i}  |\Theta_i(x)|\cdot \Jac^{m*}(\md (\varphi_i)_q)\ \textrm{d}q \nonumber\\
        &=\sum_{i\in \mathbb{N}} \,\norm{\varphi_{i\#}\bb{\Theta_i}}(A) 
    =\norm{T}(A),
\end{align}
where in the last equality we used \eqref{eq:decomposition-1}. 
Since $f$ is Lipschitz, and \(\set T\) and $S$ agree up to $\mathcal{H}^n$-nullsets, we have that
\[
\mathcal{H}^n(f(A\cap \set T))=\mathcal{H}^n(f(A\cap S))
\]
and 
\[
\mathcal{H}^0\left( f^{-1}(p)\cap A \cap \set T \right)=\mathcal{H}^0\left( f^{-1}(p)\cap A \cap S\right)
\]
for $\mathcal{H}^n$-almost every $p\in M$. Now, as in the proof of Lemma~\ref{lem:mass-of-preimages},
\begin{align}\label{eq:mass_of_images}
\mathcal{H}^n(f(\set T))&\leq \mathcal{H}^n(f(A\cap \set T))+\mathcal{H}^n(f((X\setminus A)\cap \set T)) \nonumber\\
&\leq \norm{T}(A)+\norm{T}(X\setminus A)=\mass(T).
\end{align}
Lemma~\ref{lem:mass-of-preimages} tells us that $f(\set T)\subset \spt \bb{M}=M$, and so 
\begin{align*}
\mathcal{H}^n(M\setminus f(\set T))&=\norm{\bb{M}}(M\setminus f(\set T))\\
 &=\norm{T}(X\setminus f^{-1}(f(\set T)))\leq \norm{T}(X\setminus \set T)=0.
\end{align*}
In particular, \eqref{eq:mass_of_images} is rigid and hence so are \eqref{eq:measure_of_images},  \eqref{eq:xx} and \eqref{eq:x}. By our previous observations this gives the claimed equalities.
\end{proof}

\section{Proof of Theorem~\ref{thm:rigidity-result-current-version}}\label{sec:4}

\subsection{The case \(n=1\)}\label{sec:case-n=1}
In the following we prove Theorem~\ref{thm:rigidity-result-current-version} for \(n=1\). The general case \(n\geq 2\) is proved in Section~\ref{sec:proof-of-main-thm} by reducing it to this case. In the proof we use metric 1-currents induced by curves. For a Lipschitz curve $\gamma\colon [a,b]\to X$ into a metric space $X$, the integral 1-current $\bb \gamma\coloneqq\gamma_{\#} \bb{[a,b]}$ is given by 
\[
\bb\gamma (h,\pi_1)=\int_a^b h(\gamma(t))(\pi_1\circ\gamma)'(t)\,\ud t,\quad (h,\pi_1)\in \mathcal D^1(X).
\]
Note that the boundary of $\bb\gamma$ is given by $\partial\bb\gamma(h)=h(\gamma(b))-h(\gamma(a))$, for all $h\in \mathcal D^0(X)$. If \(\gamma\) is a loop and $\gamma|_{[a,b)}$ is injective, we say that $\gamma$ is a \emph{simple} Lipschitz loop. By \cite[Theorem 5.3]{Bonicatto-Del-Nin-Pasqualetto-2022}, the integral \(1\)-current \(T=\bb{X}\) admits a decomposition
\[
T=\sum_{i\in I} \,\bb{\gamma_i}+ \sum_{j\in J}\,\bb{\eta_j},
\]
where $I, J$ are countable index sets, each \(\gamma_i\) is an injective Lipschitz curve, each $\eta_j$ is a simple Lipschitz loop,
\begin{equation}\label{eq:sum-of-total-mass}
\mass(T)=\sum_{i \in I}\mass(\bb{\gamma_i}) +\sum_{j \in J}\mass(\bb{\eta_j})=\sum_{i \in I}\ell(\gamma_i) +\sum_{j \in J}\ell(\eta_j)
\end{equation}
and
\begin{equation}\label{eq:sum-of-total-boundary-mass}
\mass(\partial T)=\sum_{i\in I} \mass(\partial\bb{\gamma_i})+\sum_{j\in J}\mass(\partial\bb{\eta_j})=2|I|.
\end{equation}
By assumption \[
\mass (\partial T)=\mass(f_\#(\partial \bb{B^1}))=\mass(\bb{1}-\bb{-1})=2
\]
and hence \eqref{eq:sum-of-total-boundary-mass} implies $|I|=1$. Henceforth we will denote the unique injective curve $\gamma_j$ by $\gamma\colon [a, b]\to X$, and by $x_1,x_2$ the endpoints of $\gamma$. Since $\partial T=\partial \bb{\gamma}=\bb{x_2}-\bb{x_1}$ 
and $f_\#(\partial T)=\bb{1}-\bb{-1}$ we conclude that $f(x_2)=1$ and $f(x_1)=-1$. In particular, since $f$ is $1$-Lipschitz,
\begin{equation}
\label{eq:length-estimate}
2\leq d(x_1,x_2)\leq \ell(\gamma)\leq \ell(\gamma)+\sum_{j\in J}\ell(\eta_j)=\mass(T)\leq 2
\end{equation}
This implies that $d(x_1,x_2)=2$, $T=\bb{\gamma}$, and $\gamma$ is a geodesic connecting $x_1$ to $x_2$. Since $X=\set T=\spt T=\gamma([a, b])$ 
we conclude that $X$ is isometric to $B^1$. In particular, because $X$ is connected and $1,-1\in f(X)$, it follows that $f$ must be surjective. Since $f\colon X\to B^1$ is a surjective $1$-Lipschitz map, 
we conclude that $f$ is an isometry. 
\subsection{From slice-isometry to isometry}\label{sec:4.2}
The aim of this section is to prove the following proposition which shows that, to obtain Theorem~\ref{thm:rigidity-result-current-version}, it suffices to prove that $f$ is an isometry when restricted to certain slices. 
\begin{proposition}\label{lemm:step-1}
Let \(n\geq 2\),  \(X\) be an integral $n$-current space, $C\subset \mathbb{R}^n$ be a convex body and \(f\colon X\to \mathbb{R}^n\) be a \(1\)-Lipschitz map
such that \(f_\# \bb{X}=\bb{C}\) and \(\mass(\bb{X})\leq \mathcal{H}^n(C)\). Further, suppose that \(k\in \{1, \dots, n-1\}\) and for every orthogonal projection $\rho\colon  \mathbb{R}^n\to \mathbb{R}^{k}$ the following holds: \textnormal{For $\mathcal{H}^k$-almost every $p\in \R^{k}$ the restriction of $f$ is an isometry $\spt \langle T ,\rho \circ f,p \rangle \to \spt \langle \bb{C} ,\rho,p \rangle$.} Then $f$ is an isometry $X\to C$.
\end{proposition}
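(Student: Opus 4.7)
My plan is to show that $d_X(x,y)=|f(x)-f(y)|$ holds on a $(\norm T\otimes\norm T)$-full, hence dense, subset of $X\times X$, and then extend to all pairs by continuity of $d_X$ and $f$. Surjectivity $f(X)=C$ follows from Lemmas~\ref{lem:mass-of-images} and~\ref{lem:mass-of-preimages} (which also force $\mass(\bb{X})=\Ha^n(C)$) together with compactness of $\spt T$ and a density argument, so it suffices to establish distance preservation.

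For each orthogonal projection $\rho\colon\R^n\to\R^k$, I would combine Lemma~\ref{lem:support-of-sliced-measure-equals-slice} with the slice-isometry hypothesis to produce an $\Ha^k$-full set $G_\rho\subset\R^k$ such that for $p\in G_\rho$ the slice $\spt\langle T,\widehat{\rho},p\rangle$ agrees with $X\cap\widehat{\rho}^{-1}(p)$ modulo $\Ha^{n-k}$-nullsets and $f$ maps it isometrically onto $C\cap\rho^{-1}(p)$. A coarea-type application of \eqref{eq:mass-of-slices} then yields a $\norm T$-full set $X_\rho\subset X$ whose points lie in the characteristic sets of good slices. Consequently, $d_X(x,y)=|f(x)-f(y)|$ whenever $x,y\in X_\rho$ and $f(x)-f(y)\in\ker\rho$, because both points then lie in a common slice support on which $f$ is isometric. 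The core of the argument is to show that for $(\norm T\otimes\norm T)$-a.e. pair $(x,y)\in X\times X$ with $f(x)\ne f(y)$, there exists an \emph{aligned} projection $\rho$ (meaning $f(x)-f(y)\in\ker\rho$) with $x,y\in X_\rho$. I would establish this via a Fubini/coarea argument on the parameter space $\Pi\times\R^k$, where $\Pi$ denotes the Grassmannian of $k$-planes in $\R^n$, using the map
\[
\Phi\colon(z_1,z_2,W)\longmapsto\bigl(\rho_W,\rho_W(z_1)\bigr),\qquad z_1\ne z_2\in C,\ W\in\mathrm{Gr}\bigl(k,(z_1-z_2)^\perp\bigr),
\]
where $\rho_W$ is projection onto $W$. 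The ``bad'' set $\{(\rho,p):p\notin G_\rho\}\subset\Pi\times\R^k$ is null by Fubini; its $\Phi$-preimage is null by coarea (since $\Phi$ has full rank and positive-dimensional fibers); and pulling back through $(f,f)\times\mathrm{id}$ via Lemma~\ref{lem:mass-of-preimages} yields a null set of bad pairs in $X\times X$. Pairs with $f(x)=f(y)$ are handled using essential injectivity of $f$ on a dense subset, a consequence of Lemma~\ref{lem:mass-of-images}.

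The main obstacle is the Fubini/coarea step. For $k=n-1$, the aligned projection is uniquely determined by the pair $(x,y)$, so one cannot simply apply Fubini on $\Pi$ alone to pick an aligned $\rho$ avoiding the bad set; instead, one must let the pair vary and use the positive-dimensional fibers of $\Phi$ to transfer Fubini-null sets in $\Pi\times\R^k$ back to null sets in the pair space. Further subtleties include measurability of the assignment $\rho\mapsto G_\rho$, justification of the coarea formula across the Grassmannian factor, and a secondary Fubini argument based on Lemma~\ref{lem:support-of-sliced-measure-equals-slice} to guarantee that $x$ and $y$ actually lie in the relevant slice supports and not merely in the fibers of $\widehat{\rho}$.
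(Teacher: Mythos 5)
Your proposal takes a genuinely different route from the paper, and the difficulty you flag in the Fubini/coarea step is not just a technical inconvenience — it is precisely the obstacle the paper's argument is designed to avoid.

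The paper does not attempt to show that the distance identity holds for $(\norm{T}\otimes\norm{T})$-a.e.\ pair and then extend by density. Instead, it works with \emph{arbitrary} points $x_1,x_2\in X$ and a small $\delta>0$. By Lemma~\ref{lem:mass-of-images}, the image sets $L_i:=f\bigl(B(x_i,\delta)\cap\set T\bigr)$ have positive $\Ha^n$-measure. The paper then invokes Lemma~\ref{cor:projection-from-dense-set}: given any two positive-measure subsets $A_1,A_2\subset\R^n$, there is an orthogonal projection $\rho$ and a positive-measure set $E\subset\rho(A_1)\cap\rho(A_2)$ of $p$'s whose fibers $\rho^{-1}(p)$ meet both $A_i$ in positive $\Ha^{n-k}$-measure. (This is proved with nothing more than Lebesgue density points and Fubini on a fixed projection; no Grassmannian parameter enters.) Choosing such a $p$ that also lies in the good set provided by the slice-isometry hypothesis and Lemma~\ref{lem:support-of-sliced-measure-equals-slice}, one finds points $y_i\in B(x_i,\delta)$ lying in a common slice, whence $d(y_1,y_2)=|f(y_1)-f(y_2)|$. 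Letting $\delta\to0$ gives the identity for $x_1,x_2$ directly. The key conceptual move is that instead of seeking a projection aligned with the fixed pair $(x_1,x_2)$ — which for $k=n-1$ is essentially unique and may be bad — one perturbs the pair inside small balls so that \emph{some} projection works.

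By contrast, your scheme does insist on aligning $\rho$ to the original pair, forcing you onto the Grassmannian $\Pi\times\R^k$ and into the coarea argument with the map $\Phi$. That argument requires verifying that $\Phi$ is a submersion (so that a null set in $\Pi\times\R^k$ pulls back to a null set of pairs), establishing joint measurability of $\rho\mapsto G_\rho$, and then running a second Fubini-type argument to ensure $x,y$ actually lie in $\set\langle T,\widehat\rho,p\rangle$ and not merely in $\widehat\rho^{-1}(p)$. None of these steps is carried out in your sketch, and at least the first two are nontrivial. So while the overall strategy is not implausible, as written it has a genuine gap at its announced crux. The paper's ball-perturbation trick together with Lemma~\ref{cor:projection-from-dense-set} is the piece you are missing, and adopting it would let you drop the Grassmannian entirely. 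Your treatment of surjectivity via compactness of $\spt T$ also needs repair (compactness is not given a priori); the paper instead first proves $f$ is an isometric embedding, extends to $\overline{X}$, and then uses $f_\#\bb{X}=\bb{C}$ to conclude $X=\set T=\spt T=\overline{X}$.
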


Here, we use the convention that $\rho\colon  \mathbb{R}^n\to \mathbb{R}^{k}$ is called \emph{orthogonal projection} if there are a \(k\)-plane \(H\susbet \R^n\) and an isometry \(\phi\colon \R^k \to H\), such that \(\phi \circ \rho\) is equal to the orthogonal projection \(\R^n \to H\).

To prove Theorem \ref{thm:rigidity-result-current-version} we will apply Proposition \ref{lemm:step-1} with $k=n-1$ to reduce it to the $n=1$ case handled in the previous subsection. Another natural option would be to take $k=1$ reducing the theorem to the $n-1$ case and performing an induction argument. 
For the proof of Proposition~\ref{lemm:step-1} we need the following simple consequence of the Lebesgue density theorem and Fubini's theorem. 
\begin{lemma}
\label{cor:projection-from-dense-set}
Let $n,k \in \mathbb{N}$ with $k<n$ and $A_1,A_2\subset \mathbb{R}^n$ be $\mathcal{H}^n$-measurable subsets such that $\mathcal{H}^n(A_i)>0$. Then there exists an orthogonal projection $\rho \colon \mathbb{R}^n\to \mathbb{R}^{k}$ and an $\mathcal{H}^{k}$-measurable $E\subset \rho(A_1)\cap \rho(A_2)$ with $\mathcal{H}^{k}(E)>0$ such that for every $p\in E$ the respective sections $\rho^{-1}(p)\cap A_i$ are $\mathcal{H}^{n-k}$-measurable with $\mathcal{H}^{n-k}(\rho^{-1}(p)\cap A_i)>0$.
\end{lemma}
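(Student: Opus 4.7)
The plan is to choose the projection $\rho$ so that its image captures positive-measure sections of both $A_1$ and $A_2$ around a common projection point. I will produce that common point by applying the Lebesgue density theorem to both sets and then orienting $\rho$ accordingly.

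First, by the Lebesgue density theorem, pick $x_1\in A_1$ and $x_2\in A_2$ of $\Ha^n$-density one, so that for every $\varepsilon>0$ there is $r_\varepsilon>0$ with
\[
\Ha^n(A_i\cap B(x_i,r))\ge (1-\varepsilon)\omega_n r^n \quad\text{for all } r\le r_\varepsilon.
\]
Since $k<n$, the hyperplane $(x_2-x_1)^\perp\susbet \R^n$ has dimension $n-1\ge k$, so I may choose a $k$-dimensional linear subspace $V\susbet (x_2-x_1)^\perp$ (any $V$ works if $x_1=x_2$) and take $\rho\colon\R^n\to\R^k$ to be the orthogonal projection associated with $V$. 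By construction, $p_0\coloneqq \rho(x_1)=\rho(x_2)$.

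Next, up to replacing each $A_i$ by a Borel subset of full $\Ha^n$-measure, I may assume $A_i$ is Borel so that $\rho\inv(p)\cap A_i$ is Borel for every $p\in\R^k$; measurability of sections of the original $A_i$ for every $p\in E$ then follows after a further $\Ha^k$-null shrinkage, by Fubini applied to the null symmetric difference. Define
\[
E_i(r)=\bigl\{p\in B(p_0,r)\cap V : \Ha^{n-k}(A_i\cap B(x_i,r)\cap \rho\inv(p))>0\bigr\}.
\]
Using the slicing identity $\Ha^{n-k}(B(x_i,r)\cap\rho\inv(p))=\omega_{n-k}(r^2-|p-p_0|^2)^{(n-k)/2}$ and the Fubini formula
\[
\int_{B(p_0,r)\cap V}\omega_{n-k}(r^2-|p-p_0|^2)^{(n-k)/2}\,d\Ha^k(p)=\omega_n r^n,
\]
the density estimate forces the weighted integral of $F_i\coloneqq (B(p_0,r)\cap V)\setminus E_i(r)$ against the same kernel to be at most $\varepsilon\omega_n r^n$: indeed, the missed mass $\Ha^n(B(x_i,r)\setminus A_i)$ is at most $\varepsilon\omega_n r^n$, and on $F_i$ the section of $B(x_i,r)\setminus A_i$ equals the full slice $B(x_i,r)\cap\rho\inv(p)$. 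Summing over $i=1,2$ yields that the weighted integral over $E_1(r)\cap E_2(r)$ is at least $(1-2\varepsilon)\omega_n r^n$, which is positive for $\varepsilon<1/2$; consequently $\Ha^k(E_1(r)\cap E_2(r))>0$. I then set $E\coloneqq E_1(r)\cap E_2(r)$ for any such $r$.

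The main obstacle, and what dictates the choice of $\rho$, is preventing $\rho(A_1)$ and $\rho(A_2)$ from landing in disjoint regions of $\R^k$. Aligning $\rho(x_1)=\rho(x_2)$ at density points ensures that both ``good'' sets $E_i(r)$ cluster around the common center $p_0$ and nearly fill $B(p_0,r)\cap V$ in the weighted sense, making their intersection of positive $\Ha^k$-measure.
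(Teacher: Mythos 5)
Your proof is correct, and it shares the paper's two opening moves — Lebesgue density points $x_1\in A_1$, $x_2\in A_2$, and a projection $\rho$ with $\rho(x_1)=\rho(x_2)$ — but differs in how $E$ is then produced. The paper sets $v=x_1-x_2$, forms $F=A_1\cap(A_2+v)$, and shows $\Ha^n(F)>0$ by a one-line density contradiction (otherwise $A_1\cup(A_2+v)$ would have density $2$ at $x_1$); since $\rho^{-1}(p)+v=\rho^{-1}(p)$, a single Fubini application to $F$ then yields a positive-measure set of $p$ whose slices meet both $A_i$ in positive $\Ha^{n-k}$-measure simultaneously. You instead estimate directly on the balls $B(x_i,r)$: the explicit cross-sectional volume $\omega_{n-k}(r^2-|p-p_0|^2)^{(n-k)/2}$, combined with Fubini and the density bound $\Ha^n(B(x_i,r)\setminus A_i)\le\varepsilon\omega_n r^n$, bounds the weighted measure of each exceptional set $F_i$ by $\varepsilon\omega_n r^n$, so for $\varepsilon<\tfrac12$ the set $E_1(r)\cap E_2(r)$ carries weighted measure at least $(1-2\varepsilon)\omega_n r^n>0$ and hence positive $\Ha^k$-measure. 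Your route is more quantitative and in fact yields an explicit lower bound on $\Ha^k(E)$; the paper's translation trick is shorter, reducing everything to the single set $F$ and one Fubini step. The measurability bookkeeping you include (replacing $A_i$ by Borel subsets, then shrinking $E$ by an $\Ha^k$-null set) matches the paper's closing sentence.
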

\begin{proof}
Let $p_i\in A_i$ be Lebesgue density points, i.e. $\Theta_n(A_i,p_i)=1$, and set $v\coloneqq p_1-p_2$. For $F\coloneqq A_1\cap (A_2+v)$, we claim that $\mathcal{H}^n(F)>0$. Indeed if $\mathcal{H}^n(F)=0$ we arrive at the following contradiction:
\[\Theta_n(A_1\cup (A_2+v),p_1)=\Theta_n(A_1,p_1)+\Theta_n(A_2,p_2)=2>1.\] 
Now we choose an orthogonal projection $\rho \colon \mathbb{R}^{n}\to \R^{k}$ with $\rho(v)=0$. Then \[\rho(F)\subset \rho(A_1)\cap \rho(A_2+v)=\rho(A_1)\cap\rho(A_2).\] Since $F$ is $\mathcal{H}^n$-measurable with $\mathcal{H}^n(F)>0$, Fubini's theorem implies that there is an $\mathcal{H}^{k}$-measurable set $E\subset \rho(F)$ with $\mathcal{H}^{k}(E)>0$ such that for every $p\in E$ the section $\rho^{-1}(p)\cap F$ is $\mathcal{H}^{n-k}$-measurable with $\mathcal{H}^{n-k}(\rho^{-1}(p)\cap F)>0$. Since $F\subset A_1$, $F\subset A_2+v$ and $\rho^{-1}(p)+v=\rho^{-1}(p)$ we have $\mathcal{H}^{n-k}(\rho^{-1}(p)\cap A_i)>0$ for \(i=1,2\). Finally by Fubini $\rho^{-1}(p)\cap A_i$ is $\mathcal{H}^{n-k}$-measurable for almost every $p\in \R^{k}$ and hence we may also assume that $\rho^{-1}(p)\cap A_i$ is $\mathcal{H}^{n-k}$-measurable for every $p\in E$.
\end{proof}
\begin{proof}[Proof of Proposition~\ref{lemm:step-1}]
Let $x_1,x_2 \in X$ and $\delta>0$. Then, the balls $B(x_i,\delta)$ are  $\norm{T}$-measurable with $\norm{T}(B(x_i,\delta))>0$. By Lemma~\ref{lem:mass-of-images}, setting $B_i \coloneqq B(x_i,\delta)\cap \set T$, the sets $L_i \coloneqq f(B_i)$ are $\mathcal{H}^n$-measurable with $\mathcal{H}^n(L_i)>0$.

By Lemma~\ref{cor:projection-from-dense-set}, there are an orthogonal projection $\rho \colon \mathbb{R}^n\to\mathbb{R}^{k}$ and a measurable $E\subset \rho(L_1)\cap \rho(L_2)$ with $\mathcal{H}^{k}(E)>0$ such that $\rho^{-1}(p)\cap L_i$ is of positive $\mathcal{H}^{n-k}$-measure for every $p\in E$. By Lemma~\ref{lem:support-of-sliced-measure-equals-slice} we may further assume that for every $p\in E$, 
\begin{equation}
\label{eq:proof-step-1}
\set \langle T, \widehat{\rho},p\rangle = \widehat{\rho}^{\hspace{0.2em}-1}(p)\cap \set T
\end{equation}
up to an $\mathcal{H}^{n-k}$ null set and by our assumption that for every $p\in E$ the restriction of $f$ defines an isometry $\spt \langle T,\widehat{\rho}, p\rangle \to \spt \langle \bb{C},\rho,p\rangle$. 

Now let $p\in E$. Then for each $i$ the slice $\rho^{-1}(p)\cap L_i$ is of positive $\mathcal{H}^{n-k}$-measure. Since $f$ is $1$-Lipschitz and $f(\widehat{\rho}^{\hspace{0.2em}-1}(p)\cap B_i)=\rho^{-1}(p)\cap L_i$, this implies that also $\widehat{\rho}^{\hspace{0.2em}-1}(p)\cap B_i$ is of positive $\mathcal{H}^{n-k}$-measure. Thus we deduce from \eqref{eq:proof-step-1} that \[\mathcal{H}^{n-k}(B_i \cap \set \langle T, \widehat{\rho},p\rangle)= \mathcal{H}^{n-k}(\widehat{\rho}^{\hspace{0.2em}-1}(p)\cap B_i)>0.
\]
In particular, we may respectively choose points $y_i\in B_i\cap \spt \langle T, \widehat{\rho},p\rangle$. Since $f$ is an isometric embedding on $\set \langle T, \widehat{\rho},p\rangle$, we have that 
\[
d(y_1,y_2)=|f(y_1)-f(y_2)|.
\]
Since $y_i\in B(x_i,\delta)$ and $f$ is continuous, by letting $\delta\to 0$ we conclude that $d(x_1,x_2)=|f(x_1)-f(x_2)|$. In particular, since $x_1, x_2\in X$ were arbitrary, $f$ defines an isometric embedding. By Lemma~\ref{lem:mass-of-preimages}, $f(X)$ is dense in $C$, and so it follows that \(\bar{f} \colon \overline{X} \to C\) is an isometry. Since \(f_\# \bb{X}=\bb{C}\), we find that $X=\set T=\spt T=\overline{X}$ and hence the claim follows.
\end{proof}

\subsection{Proof of Theorem \ref{thm:rigidity-result-current-version}}\label{sec:proof-of-main-thm}
In the following, we suppose that \(n\geq 2\). The case when \(n=1\) is treated in Section~\ref{sec:case-n=1}. To prove the theorem it suffices to show that the assumptions of Proposition~\ref{lemm:step-1} are satisfied for $k=n-1$. So let $\rho \colon \mathbb{R}^n\to \mathbb{R}^{n-1}$ be an orthogonal projection.
Letting $T_p=\langle \bb{X}, \widehat{\rho},p \rangle$, \(X_p= \set T_p\), and $C_p=C\cap \rho^{-1}(p)$, 
we claim that the following conditions are satisfied for \(\Ha^{n-1}\)-almost every $p\in\mathbb{R}^{n-1}$:
\begin{enumerate}
    \item \((X_p, T_p)\) is an integral current space
    \item $f_\# \bb{X_p}=  \bb{C_p}$. 
    \item $\mass(\bb{X_p})\leq \mathcal{H}^1(C_p)$.
    \item $\mass(\bb{\partial X_p})\leq \mathcal{H}^0(\partial C_p)$.
\end{enumerate}

Condition (1) follows directly from the properties of the slicing operator discussed in Section~\ref{sec:slicing}.

 Applying the constancy theorem (see \cite[Corollary 3.13]{fed60}) to the integral \(n\)-cycle \(T=f_\# \bb{X}-\bb{C}\), it follows that \(T=0\) and thus $f_\# \bb{X}=\bb{C}$. Alternatively, this can be seen by applying the deformation theorem (see e.g. \cite[Theorem A.2]{basso2021undistorted}). Hence, using \eqref{eq:slice-and-push-forward-commute}, we find that 
 \[
 f_\# \bb{X_p}=f_\# \langle \bb{X}, \widehat{\rho},p \rangle=\langle f_\# \bb{X}, \rho,p \rangle=\langle \bb{C}, \rho,p \rangle
 \]
 for \(\Ha^{n-1}\)-almost every $p$. Notice that \(\bb{C_p}\in \mass_1(\R^n)\) are concentrated on \(\rho^{-1}(p)\), satisfy \eqref{eq:universal-property}, and \(\int_{\R^{n-1}} \mass(\bb{C_p}) \, dp <\infty\). Hence, as these properties uniquely determine the slices \(\langle \bb{C}, \rho,p \rangle\), it follows that \(\langle \bb{C}, \rho,p \rangle=\bb{C_p}\) for \(\Ha^{n-1}\)-almost every $p$, and thus, by the above (2) follows. 
 We proceed by showing (3). By (2), it follows that $\mass(\bb{X_p})\geq \mathcal{H}^1(C_p)$. Hence, using Fubini and \eqref{eq:mass-of-slices}, we find that
\begin{align*}
    \mathcal{H}^n(C)=\int_{\R^{n-1}}\mathcal{H}^{1}(C_p) \ \mathrm{d}p\leq \int_{\R^{n-1}}\mass(\bb{X_p}) \ \mathrm{d}p =\mass(\bb{X} \on (1, \widehat{\rho}))\leq \mass(\bb{X}).
\end{align*}
By our assumption \(\mass(\bb{X})\leq \Ha^{n}(C)\) this equality is rigid, and so (3) follows.

Finally, we prove (4). By Lemma~\ref{lem:mass-of-preimages}, $f(\spt \bb{\partial X})\subset \partial C$, and hence by Lemma~\ref{lem:mass-of-images} for \(\Ha^{n-1}\)-almost every $p$, 
\[
\mathcal{H}^0(\widehat{\rho}^{\hspace{0.2em}-1}(p)\cap \set \partial T)\leq 2.
\]
However, for \(\Ha^{n-1}\)-almost every $p$ we also have by Lemma~\ref{lem:support-of-sliced-measure-equals-slice} that \[\widehat{\rho}^{\hspace{0.2em}-1}(p)\cap \set \partial T =\set \langle \partial T, \widehat{\rho},p \rangle =\set \bb{ \partial X_p}=\spt \bb{ \partial X_p}
\]
Together with \(f_\# \bb{\partial X_p}=\bb{\partial C_p}\) this implies (4). 

Now, since Theorem~\ref{thm:rigidity-result-current-version} is valid when \(n=1\), the restriction of \(f\) is an isometry $\spt \langle T ,\widehat{\rho}, p \rangle \to \spt \langle \bb{C} ,\rho,p \rangle$ for \(\Ha^{n-1}\)-almost every \(p\in \R^{n-1}\). Therefore, as \(\rho\) was arbitrary, Proposition~\ref{lemm:step-1} tells us that \(f\) is an isometry, as desired. \qed


\section{Proof of Corollary~\ref{cor:sphere-rigidity}}\label{sec:5}
The \emph{Euclidean cone} $CX$ over a metric space $X=(X,d)$ is the metric space obtained when endowing $X\times [0,1]$ with the pseudometric
\begin{equation}
    d_C((x,r),(y,s)):=\begin{cases}
    \sqrt{r^2+s^2-2rs\cos\left(d(x,y)\right)} &\textnormal{ if }d(x,y)<\pi,\\
    r+s &\textnormal{ otherwise,}
    \end{cases}
\end{equation}
which defines a metric on the quotient space $CX=X\times [0,1]/\sim$, where $(x,0)\sim (y,0)$ for all $x,y\in X$. Compare also \cite[Section 3.6]{burago2022course}. Observe in particular that the Euclidean cone over the round sphere $S^n$ is isometric to the flat disk $B^{n+1}$. We denote by $H\colon X\times [0,1]\to CX$ and $e\colon X \to CX$ the Lipschitz maps given by $h(x,t)=[(x,t)]$ and $e(x)=[(x,1)]$. It is a consequence of the monotonicity of the cosine function on $[0,\pi]$ that $e$ is $1$-Lipschitz. For the same reason, also if $f\colon X\to Y$ is $1$-Lipschitz then the map $Cf\colon CX \to CY$ defined by \([(x,r)]\mapsto [(f(x), r)]\) is $1$-Lipschitz as well. 

For any $T\in \bI_n(X)$, we set \(CT\coloneqq H_\#( T\times \bb{0,1})\), where the product current \(T\times \bb{0,1}\in \bI_{n+1}(X\times [0,1])\) is defined as in \cite[Section 3.3]{basso2021undistorted}. By construction, $CT\in \bI_{n+1}(CX)$, and one has $\set CT=H(\set T\times [0,1])$, $\spt CT=H(\spt T\times [0,1])$, \(\partial (CT)= C(\partial T)+e_\# T\) and \((Cf)_\#CT=C(f_\#T)\). Moreover, if $T$ is represented as in \eqref{eq:rep-rect-current} by functions $\Theta_i\in L^{1}(\mathbb{R}^n,\mathbb{Z})$ and bi-Lipschitz embeddings $\varphi_i\colon K_i \to X$ then setting $\widetilde{\Theta}_i(x,t):=\Theta(x)$, $\widetilde{K}_i:=K_i\times[0,1]$ and $\widetilde{\varphi}_i(x,t):=[(\varphi_i(x),t)]$ we find that
\begin{equation}
\label{eq:cone-decomp}
    CT=\sum_{i\in \N} \widetilde{\varphi}_{i\#} \bb{\widetilde{\Theta}_i} \ \ \textnormal{and} \ \ \mass(CT)=\sum_{i\in \N} \mass (\widetilde{\varphi}_{i\#} \bb{\widetilde{\Theta}_i}).
\end{equation}

The following Lemma shows that the $\mass^{\ir}$-mass of $CT$ is analogous to the volume of cones in Euclidean space.
\begin{lemma}
\label{lem:coning-ineq}
If $T\in \bI_n(X)$ then
\begin{equation}
\label{eq:coning-ineq}
    \mass^{\ir}(CT)=\frac{1}{n+1}\cdot \mass^{\ir}(T).
\end{equation}
\end{lemma}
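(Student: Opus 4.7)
The plan is to derive \eqref{eq:coning-ineq} from the Finsler mass representation formula \eqref{eq:Jacobian-mass-characterization} applied to the decomposition \eqref{eq:cone-decomp}, once the metric differential of each chart $\widetilde{\varphi}_i$ has been computed. The key observation is that the cone metric is infinitesimally a product metric away from the apex, so $\md(\widetilde{\varphi}_i)_{(p,r)}$ turns out to be a scaled product seminorm, and the factor $\tfrac{1}{n+1}$ then appears simply as $\int_0^1 r^n\,dr$ after Fubini.

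\textbf{Step 1 (metric differential).} First I would fix $i\in\N$, a point $p\in K_i$ at which $\varphi_i$ admits a metric differential, and $r\in(0,1]$. Plugging the cosine-law definition of $d_C$ into $\widetilde{\varphi}_i(p+u,r+v)$ and $\widetilde{\varphi}_i(p,r)$ and using the expansion $\cos\theta=1-\theta^2/2+O(\theta^4)$ gives, after a short Taylor calculation,
\begin{equation*}
d_C\bigl(\widetilde{\varphi}_i(p+u,r+v),\widetilde{\varphi}_i(p,r)\bigr)^2 = v^2 + r^2\,\md(\varphi_i)_p(u)^2 + o(|u|^2+v^2),
\end{equation*}
from which one reads off
\begin{equation*}
\md(\widetilde{\varphi}_i)_{(p,r)}(u,v) = \sqrt{r^2\md(\varphi_i)_p(u)^2 + v^2} = \bigl((r\cdot \md(\varphi_i)_p)\times|\cdot|\bigr)(u,v).
\end{equation*}

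\textbf{Step 2 (Jacobian and integration).} Next, combining the product identity \eqref{eq:ir-product} with the normalization (i) and the transformation law (iii) applied to the scaling $r\cdot I_n$, one obtains
\begin{equation*}
\Jac^{\ir}\bigl(\md(\widetilde{\varphi}_i)_{(p,r)}\bigr) = \Jac^{\ir}(r\cdot\md(\varphi_i)_p)\cdot\Jac^{\ir}(|\cdot|) = r^n\cdot\Jac^{\ir}(\md(\varphi_i)_p).
\end{equation*}
Then I would plug this into the Finsler mass formula \eqref{eq:Jacobian-mass-characterization} for each summand of \eqref{eq:cone-decomp} and apply Fubini:
\begin{equation*}
\mass^{\ir}\bigl(\widetilde{\varphi}_{i\#}\bb{\widetilde{\Theta}_i}\bigr) = \int_{K_i}\int_0^1 |\Theta_i(p)|\,r^n\,\Jac^{\ir}(\md(\varphi_i)_p)\,dr\,dp = \frac{1}{n+1}\,\mass^{\ir}\bigl(\varphi_{i\#}\bb{\Theta_i}\bigr).
\end{equation*}
Summing over $i$ and invoking \eqref{eq:cone-decomp} for both $T$ and $CT$ then yields \eqref{eq:coning-ineq}.

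\textbf{Main obstacle.} The chief technical nuisance is that $\widetilde{\varphi}_i$ fails to be a bi-Lipschitz embedding along $K_i\times\{0\}$, where the map $H$ collapses the cone apex, so strictly speaking $\{\widetilde{\varphi}_i\}$ does not directly provide a bi-Lipschitz parametrization of $CT$ in the sense of \eqref{eq:rep-rect-current}. I would circumvent this by partitioning $(0,1]=\bigcup_{k\in\N}(\tfrac{1}{k+1},\tfrac{1}{k}]$ and working with the restrictions $\widetilde{\varphi}_i|_{K_i\times(\frac{1}{k+1},\frac{1}{k}]}$, which are genuine bi-Lipschitz embeddings, noting that the apex contributes a $\mass^{\ir}$-nullset to $CT$. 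A secondary point to verify is that the metric differential computation above is valid $\leb^{n+1}$-almost everywhere on $K_i\times[0,1]$, which follows from Kirchheim's theorem applied to $\varphi_i$ together with the a.e.\ differentiability of the radial variable.
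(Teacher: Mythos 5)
Your proof is correct and follows essentially the same route as the paper's: a Taylor expansion of the cone metric gives $\md(\widetilde{\varphi}_i)_{(p,r)}=(r\cdot\md(\varphi_i)_p)\times|\cdot|$, the identity \eqref{eq:ir-product} together with the transformation law produces the factor $r^n$, and Fubini yields $\tfrac{1}{n+1}$. Your workaround for the collapse at the apex (partitioning $(0,1]$ into annuli) is a sensible way to handle a technicality the paper's proof leaves implicit; an equivalent shortcut is to observe that $K_i\times\{0\}$ maps to a single point, an $\mathcal{H}^{n+1}$-nullset, so it contributes nothing to \eqref{eq:Jacobian-mass-characterization}.
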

Note that for $\mass$, instead of \eqref{eq:coning-ineq}, only a weaker inequality without the factor $\frac{1}{k+1}$ holds, compare \cite[Lemma~3.5]{basso2021undistorted}. For this reason we can prove Corollary~\ref{cor:sphere-rigidity} only for $\mass^{\ir}$.
\begin{proof}
For a Lipschitz map $\varphi \colon K \to X$ with $K\subset \mathbb{R}^k$ we consider the corresponding map $\widetilde{\varphi}\colon K\times [0,1]\to CX$ as above. Then then for almost every $(x,r)\in K\times [0,1]$ one has for every $(v,s)\in \mathbb{R}^n\times\mathbb{R}=\mathbb{R}^{n+1}$ that 
\begin{equation}
    \big(\hspace{-0.2em}\md \widetilde{\varphi}_{(x,r)}(v,s)\big)^2
    =\lim_{\varepsilon\downarrow 0}  \frac{(r+\varepsilon s)^2+r^2-2r(r+\varepsilon s)\cos\big( d\big(\varphi(x+\varepsilon v),\varphi(x)\big)\big)}{\varepsilon^2}.
\end{equation}
Using that \(1-\cos(x)=\frac{x^2}{2}+O(x^4)\) we deduce that
\[
 \md \widetilde{\varphi}_{(x,r)}(v,s)=\sqrt{r^2\cdot \left(\md \varphi_x(v)\right)^2+s^2}.
\]
Thus by \eqref{eq:ir-product} one has 
\begin{equation}
    \Jac^{\ir}(\md \widetilde{\varphi}_{(x,r)})=\Jac^{\ir}(r\cdot \md \varphi_x)=r^{n}\cdot\Jac^{\ir}(\md \varphi_x).
\end{equation}
Using this observation, the charts $\widetilde{\varphi}_i$ as in \eqref{eq:cone-decomp}, and Fubini we obtain
\begin{equation}
    \mass^{\ir}(CT)=\int_0^1 r^n\ \textrm{d}r\cdot \mass^{\ir}(T)=\frac{1}{n+1}\cdot \mass^{\ir}(T)
\end{equation}
as desired.
\end{proof}

\begin{proof}[Proof of Corollary \ref{cor:sphere-rigidity}]
Let $T=\bb{X}$. Then $CX=(CX,CT)$ is an integral current space and $Cf\colon CX\to B^{n+1}$ is a $1$-Lipschitz map with $(Cf)_{\#} CT=\bb{B^{n+1}}$ (see the discussion before Lemma \ref{lem:coning-ineq}). Furthermore by Lemma~\ref{lem:coning-ineq}
\begin{equation}
    \mass(CT)\leq \mass^{\ir}(CT)=\frac{1}{n+1}\cdot \mass^{\ir}(T)\leq \frac{1}{n+1}\cdot \vol^{n}(S^n)=\vol^{n+1}(B^{n+1})
\end{equation}
and
\begin{equation}
    \mass(\partial(CT))=\mass(e_{\#}T)\leq \mass(T)\leq \mass^{\ir}(T)\leq \vol^{n}(S^n).
\end{equation}
Hence Theorem~\ref{thm:rigidity-result-current-version} implies that $Cf\colon CX\to B^{n+1}$ is an isometry.

Now if $x,y\in X$ are such that $d_{CX}([(x,1)],[(y,1)])=d_{B^{n+1}}(f(x),f(y))<2$ then $d(x,y)<\pi$ and
\begin{equation}
    \sqrt{2-2\cos(d(x,y))}=\sqrt{2-2\cos\left(d_{S^n}(f(x),f(y))\right)}.
\end{equation}
Since $\cos$ is injective on $[0,\pi]$ this implies that $d(x,y)=d_{S^n}(f(x),f(y))$.

To prove the equality also for $x,y\in X$ with $d_{B^{n+1}}(f(x),f(y))=2$ we choose $z\in X$ with $x\neq z\neq y$. Since $f$ is bijective and $f(z)$ lies on a $S^{n}$-geodesic from $f(x)$ to $f(y)$, the previous case gives
\begin{equation}
    d(x,y)\leq d(x,z)+d(z,y)=d_{S^n}(f(x),f(z))+d_{S^n}(f(z),f(y))=d_{S^n}(f(x),f(y)).
\end{equation}
Since $f$ is $1$-Lipschitz, this implies the claim.
\end{proof}

\section{Proof of Theorem~\ref{thm:main-i}}\label{sec:6}

\subsection{The lower bound}\label{sec:6.1}
We start by proving inequality \eqref{eq:main-ineq}. It is readily implied by the following lemma since $\mass \leq \mass^{\ir}$.

\begin{lemma}\label{lem:absolute-filling-vol}
Let \(C\susbet \R^n\) be a convex body and suppose \(\iota\colon\partial C \to X\) is an isometric embedding into an integral current space \(X\) such that \(\iota_\# \bb{\partial C}=\bb{\partial X}\). Then
\[
\mass(\bb{X})\geq \vol^n(C).
\]
\end{lemma}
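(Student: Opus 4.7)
The plan is to construct a $1$-Lipschitz map $f\colon X \to \ell_\infty^n$ whose push-forward equals $\bb{C}$, and then exploit the fact that the Gromov mass$*$ Jacobian of $\|\cdot\|_\infty$ equals one.

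First, I would use that $\ell_\infty^n$ is injective (as a finite product of copies of the injective space $\R$) and that the inclusion $j\colon \partial C \hookrightarrow \ell_\infty^n$ is $1$-Lipschitz, since $\|\cdot\|_\infty \leq |\cdot|_2$. Composing $j$ with the isometry $\iota^{-1}\colon \iota(\partial C) \to \partial C$ yields a $1$-Lipschitz map $\iota(\partial C) \to \ell_\infty^n$, which by McShane's extension theorem extends to a $1$-Lipschitz map $f\colon X \to \ell_\infty^n$ satisfying $f \circ \iota = j$.

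Next I would identify $f_\#\bb{X}$ with $\bb{C}$, viewed as an integral current in $\ell_\infty^n = \R^n$; the two spaces coincide as sets with bi-Lipschitz equivalent norms, so they share the same Lipschitz test functions, the same metric currents, and the same boundary operator. Since
\[
\partial(f_\#\bb{X}) \,=\, f_\#\iota_\#\bb{\partial C} \,=\, j_\#\bb{\partial C} \,=\, \partial \bb{C},
\]
the difference $f_\#\bb{X} - \bb{C}$ is an integral $n$-cycle in $\R^n$. By Federer's constancy theorem, any such cycle is a constant integer multiple of $\bb{\leb^n}$, and finiteness of the mass forces the constant to vanish; hence $f_\#\bb{X} = \bb{C}$.

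To conclude, I would apply the $1$-Lipschitz push-forward estimate \eqref{eq:mass-pf-inequality} to obtain $\mass_{\ell_\infty^n}(f_\#\bb{X}) \leq \mass(\bb{X})$, and then compute the left-hand side. By \eqref{eq:mass*-characterization} it coincides with the Gromov mass$*$ volume $\mu^{m*}_{\ell_\infty^n}(C)$; since the unit ball $[-1,1]^n$ of $\|\cdot\|_\infty$ is itself a parallelepiped of volume $2^n$, the Jacobian $\Jac^{m*}(\|\cdot\|_\infty)$ equals $1$, and hence $\mass_{\ell_\infty^n}(\bb{C}) = \vol^n(C)$. Combining these inequalities gives $\mass(\bb{X}) \geq \vol^n(C)$ as required. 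The only mildly delicate point is the interplay of $\R^n$ and $\ell_\infty^n$ when invoking constancy, but this is immediate from bi-Lipschitz equivalence; no stronger ingredient (such as $\mass^{\ir}$ or an inner-product structure on an auxiliary Banach space) is needed for this bare mass bound, as the choice of target norm whose unit ball is itself a parallelepiped makes the Jacobian computation trivial.
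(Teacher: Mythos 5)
Your argument is correct and essentially identical to the paper's own proof of Lemma~\ref{lem:absolute-filling-vol}: both extend $\iota^{-1}$ (viewed with target $\ell_\infty^n$) to a $1$-Lipschitz map $f\colon X\to\ell_\infty^n$ using injectivity of $\ell_\infty^n$, invoke the constancy theorem (valid since $\R^n$ and $\ell_\infty^n$ are bi-Lipschitz equivalent) to conclude $f_\#\bb{X}=\bb{C}$, and finally use $\Jac^{m*}(\|\cdot\|_\infty)=1$ to identify $\mass(\bb{C})$ in $\ell_\infty^n$ with $\vol^n(C)$.
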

The following proof is essentially based on an observation due to Gromov \cite[Proposition~2.1.A]{gromov-1983}.
\begin{proof}
Since \(\iota\) is an isometric embedding and \(\spt \iota_\# \bb{\partial C} \susbet \overline{\iota(\partial C)}\), we can conclude that \(j_\# \bb{\partial X}=\bb{\partial C}\), where \(j\colon \partial X \to \partial C\) denotes the inverse of \(\iota\). Let \(h\colon \R^n \to \ell_\infty^n\) be the identity map, which is \(1\)-Lipschitz. Obviously, \(h\circ j\) is also \(1\)-Lipschitz, and since \(\ell_\infty^n\) is injective, there exists a \(1\)-Lipschitz extension \(f\colon X\to \ell_\infty^n\) of \(h \circ j\). Since \(\R^n\) and \(\ell_\infty^n\) are bi-Lipschitz equivalent, it follows directly from the constancy theorem (see \cite[Corollary 3.13]{fed60}) that the \(n\)-cycle \(T=f_\# \bb{X}-h_\# \bb{C}\) is equal to the zero current and thus \(f_\# \bb{X}=h_\# \bb{C}\). In particular, \(\mass(\bb{X}) \geq \mass(h_\# \bb{C})\). But 
\[
\mass(h_\# \bb{C})=\mu^{m\ast}(h(C))=\mu^{m\ast}(C)=\mass(\bb{C}),
\]
where in the second equality we have used that \(\Jac^{m\ast}(\norm{\cdot}_\infty)=1\). 
\end{proof}
\subsection{Rigidity}\label{sec:6.2}

In this subsection we use the techniques of Burago and Ivanov developed in \cite{bi10, bi13} to deduce the rigidity statement in Theorem~\ref{thm:main-i} from the Lipschitz-volume rigidity result Theorem~\ref{thm:rigidity-result-current-version}.

For the proof we need the following auxiliary spaces: We denote by $\Ell\coloneqq L^\infty(S^{n-1})$ the Banach space of (equivalence classes of) Borel measurable essentially bounded functions $S^{n-1}\to \R$ endowed with the usual norm~$\norm{\cdot}_\infty$. Furthermore we consider the space $\Ell_2\coloneqq L^2(S^{n-1})$ equipped with the inner product
\[
\langle f, \, g\rangle_{2} \coloneqq \frac{n}{\vol^{n-1}(S^{n-1})}\int_{S^{n-1}}fg\ \ud\Ha^{n-1}
\]
and the corresponding norm $\|f\|_2\coloneqq \sqrt{\langle f,f\rangle_{2}}$. The need for this particular normalization constant will become clear below. The properties of these spaces relevant for the proof of Theorem~\ref{thm:main-i} are summarized in the following lemma.

\begin{lemma}
\label{lem:proof-of-main-thm}
The following hold true:
\begin{itemize}
    \item[(1)] $\Ell$ is injective. 
    \item[(2)] $\Ell_2$ is a Hilbert space.
    \item[(3)] The canonical embedding $I\colon \Ell \to \Ell_2$ is Lipschitz and
\begin{equation}\label{eq:volume-non-increasing}
\norm{I_\# T}^{\ir} \leq I_\# \norm{T}^{\ir}
\end{equation}
for every $T\in \mathcal{R}_n(\Ell)$.
\item[(4)] There is a linear map $\Phi \colon \mathbb{R}^n\to \Ell$ such that $\Phi \colon \mathbb{R}^n\to \Ell$ and the composition $I\circ \Phi \colon \R^n \to \Ell_2$ are both isometric embeddings.
\end{itemize}
\end{lemma}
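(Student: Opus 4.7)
Parts (1), (2), and (4) are essentially standard. For (1), the classical Nachbin--Goodner--Kelley theorem asserts that $L^\infty(\mu)$-spaces are $1$-injective among Banach spaces, and via hyperconvexity (Aronszajn--Panitchpakdi) this also yields metric injectivity. Part (2) is immediate. For (4), I would set $\Phi(v)(x) \coloneqq \langle v, x\rangle_{\R^n}$ for $v \in \R^n$ and $x \in S^{n-1}$. The identity $\|\Phi(v)\|_\infty = |v|$ is Cauchy--Schwarz (with equality at $x = v/|v|$), while $\|\Phi(v)\|_2 = |v|$ follows from the symmetry identity $\int_{S^{n-1}} x_i x_j \, \ud \Ha^{n-1} = \delta_{ij}\, \vol^{n-1}(S^{n-1})/n$, which precisely cancels the prefactor in $\langle\cdot,\cdot\rangle_2$.

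Assertion (3) is the substantial part, and the mass inequality is the main obstacle I anticipate. The Lipschitz bound $\|I(f)\|_2 \leq \sqrt{n}\|f\|_\infty$ is Jensen's inequality. For the mass inequality, my plan is to reduce it to a pointwise comparison of Jacobians. Writing $T = \sum_i \varphi_{i\#}\bb{\Theta_i}$ as in \eqref{eq:rep-rect-current}, the characterization \eqref{eq:Jacobian-mass-characterization} together with $(I\varphi_i)^{-1}(A) = \varphi_i^{-1}(I^{-1}(A))$ gives
\begin{equation*}
(I_\# \|T\|^{\ir})(A) = \|T\|^{\ir}(I^{-1}(A)) = \sum_i \int_{(I\varphi_i)^{-1}(A)} |\Theta_i(q)|\, \Jac^{\ir}(\md \varphi_{i,q}) \, \ud q,
\end{equation*}
while decomposing each $K_i$ into countably many Borel pieces on which $I\varphi_i$ is bi-Lipschitz (which is possible by Kirchheim's theorem and standard arguments) and applying \eqref{eq:Jacobian-mass-characterization} piecewise yields the upper bound
\begin{equation*}
\|I_\# T\|^{\ir}(A) \leq \sum_i \int_{(I\varphi_i)^{-1}(A)} |\Theta_i(q)|\, \Jac^{\ir}(\md (I\varphi_i)_q) \, \ud q.
\end{equation*}
Hence it suffices to prove the pointwise bound $\Jac^{\ir}(\md(I\varphi_i)_q) \leq \Jac^{\ir}(\md\varphi_{i,q})$ at $\Ha^n$-almost every $q \in K_i$.

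To establish this pointwise comparison, let $L \colon \R^n \to \Ell$ be the linear derivative of $\varphi_i$ at a metric differentiability point $q$ (assumed of full rank), set $V \coloneqq L(\R^n)$, and let $\iota \colon V \to \R^n$ be a linear isomorphism sending some $\langle\cdot,\cdot\rangle_2$-orthonormal basis of $V$ to the standard basis of $\R^n$. Since $\md\varphi_{i,q}(\cdot) = \|L(\cdot)\|_\infty$ and $\md(I\varphi_i)_q(\cdot) = \|L(\cdot)\|_2$, the transformation law (iii) of Jacobians reduces the claim to $\Jac^{\ir}(\|\iota^{-1}(\cdot)\|_\infty) \geq 1$. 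For this, I would invoke the minimum characterization $\Jac^{\ir}(\sigma) = \inf\{\sqrt{\det A} : A \in \operatorname{Sym}^+_n,\, \langle\cdot, A\cdot\rangle \geq \sigma^2\}$ combined with Riesz representation on $V$: for each $x \in S^{n-1}$, the evaluation $v \mapsto v(x)$ on $V$ equals $\langle\cdot, g_x\rangle_2$ for a unique $g_x \in V$, and the reproducing identity $\int g_x \otimes g_x \, \ud\mu(x) = \id_V$ holds with $\mu \coloneqq \tfrac{n}{\vol^{n-1}(S^{n-1})}\Ha^{n-1}$ satisfying $\mu(S^{n-1}) = n$. For any admissible $A$ the constraint translates via Cauchy--Schwarz to $\langle g_x, A^{-1} g_x\rangle_2 \leq 1$ for every $x$; integrating and using the reproducing identity gives $\operatorname{tr}(A^{-1}) \leq n$, and AM--GM then yields $\det A \geq 1$. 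Taking the infimum over $A$ concludes the proof.
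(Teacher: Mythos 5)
Parts (1), (2), and (4) match the paper's treatment (the paper derives (1) from McShane plus \cite[Lemma~5.1]{bi10}; your Nachbin--Goodner--Kelley/hyperconvexity route is equally valid, and your computations for (4) are the same as the paper's). For (3), your reduction to the pointwise Jacobian comparison $\Jac^{\ir}(\md(I\circ\varphi_i)_q)\le\Jac^{\ir}(\md(\varphi_i)_q)$ via \eqref{eq:Jacobian-mass-characterization} is exactly what the paper does, and your trace--determinant argument for this comparison --- Riesz representation on the finite-dimensional subspace $V$, the reproducing identity $\int g_x\otimes g_x\,d\mu=\id_V$ with $\mu(S^{n-1})=n$, Cauchy--Schwarz and AM--GM --- is a correct and genuinely self-contained proof of the fact that the paper cites as a black box, namely \cite[Lemma~6.1]{bi10} (that post-composing a $1$-Lipschitz linear map $\R^n\to\Ell$ with $I$ is area non-increasing). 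That is a real gain in self-containedness over the paper's argument.

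There is, however, a genuine gap in how you set up the pointwise comparison. You take ``the linear derivative $L\colon\R^n\to\Ell$ of $\varphi_i$ at a metric differentiability point $q$'' and assert $\md(\varphi_i)_q=\norm{L(\cdot)}_\infty$. No such linear derivative is available: $\Ell=L^\infty(S^{n-1})$ fails the Radon--Nikodym property, so Lipschitz (even bi-Lipschitz) maps from subsets of $\R^n$ into $\Ell$ need not be G\^ateaux, let alone Fr\'echet, differentiable almost everywhere --- Kirchheim's theorem only furnishes the scalar metric differential $\md(\varphi_i)_q$, not a linear map realizing it. The paper circumvents this by taking instead the Fr\'echet derivative $A_q$ of $I\circ\varphi_i$, which exists $\Ha^n$-a.e.\ because $\Ell_2$ is a Hilbert space (hence has RNP); it then shows $A_q(v)\in\Ell$ for each $v$, with only the \emph{inequality} $\norm{A_q(v)}_\infty\le\md(\varphi_i)_q(v)$, and closes the argument via the monotonicity axiom (ii) of Jacobians. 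Your proof is easily repaired by the same device: set $L\coloneqq A_q$, keep $\md(I\circ\varphi_i)_q=\norm{L(\cdot)}_2$, replace the asserted equality $\md(\varphi_i)_q=\norm{L(\cdot)}_\infty$ by the inequality $\norm{L(\cdot)}_\infty\le\md(\varphi_i)_q$, and use monotonicity to get $\Jac^{\ir}(\norm{L(\cdot)}_\infty)\le\Jac^{\ir}(\md(\varphi_i)_q)$; your Riesz/trace argument then supplies $\Jac^{\ir}(\norm{L(\cdot)}_2)\le\Jac^{\ir}(\norm{L(\cdot)}_\infty)$ and the chain closes. (A minor point worth noting in the write-up: evaluation $w\mapsto w(x)$ on $V\subset L^\infty(S^{n-1})$ only makes sense after fixing representatives of a basis of $V$, whence it is defined for $\Ha^{n-1}$-a.e.\ $x$; this suffices since $\mu\ll\Ha^{n-1}$.)
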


\begin{proof}
To prove (1), it suffices to combine McShane's extension theorem with \cite[Lemma~5.1]{bi10}. Moreover, (2) holds true since $\norm{\cdot}_2$ is just a rescaling of the usual $L^2$-norm.

A straightforward application of Hölder's inequality shows that $I$ is $\sqrt{n}$-Lipschitz. To complete the proof of (3) it remains to show \eqref{eq:volume-non-increasing}. In light of  \eqref{eq:Jacobian-mass-characterization} and \eqref{eq:jac_volume} it suffices to show that
\begin{align}\label{eq:jac-ineq}
\Jac^{\ir}(\md(I\circ\varphi)_x)\le \Jac^{\ir}(\md\varphi_x)\quad \text{for \(\Ha^n\)-almost every}\ x\in E,
\end{align}
for every bi-Lipschitz map $\varphi:E\to \Ell$ from a Borel set $E\subset \R^n$. If $\bar\varphi:\R^n\to \Ell$ is a Lipschitz extension of $\varphi$, then $\md\varphi_x=\md\bar\varphi_x$ for \(\Ha^n\)-almost every $x\in E$. Thus we may assume that $\varphi$ is defined on $\R^n$. Let $x\in \R^n$ be a point where $\varphi$ admits a metric differential $\md\varphi_x$ and $I\circ\varphi$ admits a Fréchet differential $A_x\coloneqq (I\circ\varphi)'(x):\R^n\to \Ell_2$. Observe that 
\begin{align*}
 \md(I\circ\varphi)_x(v)=\|A_x(v)\|_2,\quad v\in \R^n.
\end{align*}
We first claim that $V_x\coloneqq A_x(\R^n)\subset \Ell$. Indeed, since 
\[
\Big\|\frac{\varphi(x+hv)-\varphi(x)}{h}\Big\|_\infty\le \Lip(\varphi),\ \textrm{ and }\ A_x(v)=\lim_{h\to 0}\frac{\varphi(x+hv)-\varphi(x)}{h}\ \textrm{ in }\Ell_2
\]
 for each $v\in\R^n$ it follows that $A_x(v)\in\Ell$ and, moreover, that 
\begin{align*}
    \left\|A_x(v)\right\|_\infty \le \liminf_{h\to 0}\Big\|\frac{\varphi(x+hv)-\varphi(x)}{h}\Big\|_\infty=\md\varphi_x(v),\quad v\in \R^n.
\end{align*}
We now prove \eqref{eq:jac-ineq}. The claim is trivially true if $A_x$ is not injective. Thus we may assume that $A_x:\R^n\to V_x$ is a linear isomorphism. In particular we have that
\begin{align}\label{eq:jac-id}
\Jac^{\ir}(\md(I\circ\varphi)_x)=|\det (I\circ A_x)|,\quad \Jac^{\ir} (\md\varphi_x)\ge \Jac^{\ir} (s)=|\det{L\inv}|,
\end{align}
where $L\colon\R^n\to \R^n$ is a linear isomorphism such that $L(B^n)$ is the John ellipsoid of the norm $s=\|A_x(\cdot)\|_\infty$. Note that $(A_x\circ L)(B^n)\subset A_x(B_s)=V_x\cap B_{\Ell}$, and thus $A_x\circ L:\R^n\to \Ell$ is 1-Lipschitz. From \cite[Lemma ~6.1]{bi10} it now follows that the composition $I\circ A_x\circ L:\R^n\to \Ell_2$ is area non-increasing, i.e. $|\det(I\circ A_x\circ L)|\le 1$. Thus
\begin{align*}
|\det(I\circ A_x)|=|\det(I\circ A_x\circ L)|\cdot|\det(L\inv)|\le |\det(L\inv)|,
\end{align*}
which by \eqref{eq:jac-id} implies \eqref{eq:jac-ineq}. 

To prove (4) we consider the linear map $\Phi:\R^n\to \Ell$ defined by 
\(\Phi_x(p)=\langle x,p\rangle, \, p\in S^{n-1},\)
for each $x\in\R^n$. Using the Cauchy-Schwarz inequality, it is easy to check that $\Phi$ is an isometric embedding. It remains to show that $I\circ \Phi$ is an isometric embedding as well. This follows from the proof of \cite[Lemma~4.6]{bi10}. Indeed, for all \(x\in \R^n\) of unit norm, one has
\begin{align*}
\norm{I\circ\Phi(x)}_{2}^2=n\fint_{S^{n-1}} \langle x,  p\rangle ^2 \, d\Ha^{n-1}(p)= \fint_{S^{n-1}} \sum_{i=1}^n \langle e_i,  p\rangle ^2 \, d\Ha^{n-1}(p)=1.
\end{align*}
By linearity of $I\circ \Phi$ this completes the proof.
\end{proof}
The proofs of rigidity in \cite{bi10} and \cite{bi13} rely on a rigidity version of \eqref{eq:volume-non-increasing} that does not apply in our current setting. Nevertheless by applying our Lipschitz-volume rigidity theorem twice we are able to avoid  this difficulty and complete the proof of Theorem~\ref{thm:main-i}.
\begin{proof}[Proof of Theorem~\ref{thm:main-i}]
Let \(X\) be an integral current space and \(\iota\colon \partial C \to X\) be an isometric embedding such that \(\iota_\# \bb{\partial C}=\bb{\partial X}\). Inequality \eqref{eq:main-ineq} follows immediately from Lemma \ref{lem:absolute-filling-vol}. It remains to show that if \(\mass^{\ir}(\bb{X})=\vol^n(C)\), then \(\iota\) can be extended to an isometry \(C \to X\).

Clearly, \(\partial X= \iota(\partial C)\). By Lemma~\ref{lem:proof-of-main-thm}(1) the map \(\Phi\circ \iota^{-1}\colon \partial X \to \Ell\) admits a \(1\)-Lipschitz extension \(f\colon X \to \Ell\). Let \(T\coloneqq f_\# \bb{X}\) and $Z$ be the integral current space $(\set(I_\# T), I_\# T)$ endowed with the subspace metric of \(\Ell_2\). By Lemma~\ref{lem:proof-of-main-thm}(3) and the monotonicity of $\mass^{\ir}$ we have that
\begin{equation}\label{eq:masses-inequality-1}
\mass(I_\# T)\leq \mass^{\ir}(I_\# T)\leq \mass^{\ir}(T)\leq  \mass^{\ir}(\bb{X})\leq \vol^n(C).
\end{equation}
Lemma~\ref{lem:proof-of-main-thm}(2) implies that there is a $1$-Lipschitz projection $P\colon \Ell_2\to (I\circ \Phi)(\R^n)$. Notice that \(P_\# \bb{\partial Z}=(I\circ \Phi)_{\#} \bb{\partial C}\). Now, as \(\mass(\bb{\partial Z})=\vol^{n-1}(\partial C)\) and \(\mass(\bb{Z})\leq \vol^n(C)\), Theorem~\ref{thm:rigidity-result-current-version} implies that the restriction of $P$ to $Z$ defines an isometry $Z\to (I\circ \Phi)(C)$. Since $\Ell_2$ is a Hilbert space and hence uniquely geodesic, we conclude that $Z=(I\circ \Phi)(C)$.  In particular, \eqref{eq:masses-inequality-1} is rigid, and so \(\mass^{\ir}(I_\# T)=\mass^{\ir}(T)=\vol^n(C)\). Therefore, by \eqref{eq:volume-non-increasing}, we get that \(\norm{I_\# T}^{\ir}=I_\# \norm{T}^{\ir}\) and so \(I(\spt T)\subset \spt I_\# T\). Thus, since \(\spt I_\# T=(I\circ \Phi)(C)\) and \(\partial T=\Phi_\# \bb{\partial C}\), it follows that \(T=\Phi_\# \bb{C}\). But \(T=f_\# \bb{X}\) and so
\[
\vol^n(C)=\mass(f_\# \bb{X})\leq \mass(\bb{X}) \leq \mass^{\ir}(\bb{X})\leq \vol^n(C).
\]
By Lemma~\ref{lem:mass-of-preimages}, it follows that \(f(X)\susbet \Phi(C)\). Hence, we can apply Theorem~\ref{thm:rigidity-result-current-version} once again and we conclude that $f\colon X\to \Phi(C)$ is an isometry.
\end{proof}

\section{Intrinsic flat convergence and the Perales question}
\label{sec:Intrinsic flat convergence}

\subsection{Intrinsic flat convergence}\label{sec:7.1} 
Given \(T\in \bI_n(X)\), let
\[
\mathcal{F}_X(T)=\inf \big\{ \mass(U)+\mass(V) : T=U+\partial V, \, U\in \bI_{n}(X), \, V\in \bI_{n+1}(X)\big\}
\]
denote the \emph{flat norm} of \(T\). If the ambient space \(X\) is clear form the context we often write \(\mathcal{F}(T)\) instead of \(\mathcal{F}_X(T)\). 
We say that \(T_i\in \bI_n(X)\) \emph{flat converges} to \(T\in \bI_n(X)\), if \(\mathcal{F}(T-T_i)\to 0\) as \(i\to \infty\). Moreover, we say that \(T_i\) \emph{converges weakly} to \(T\) if \(T_i(h, \pi_1, \dots, \pi_n) \to T(h, \pi_1, \dots, \pi_n)\) as \(i\to \infty\) for every \((h, \pi_1, \dots, \pi_n)\in \mathcal{D}^n(X)\). It is readily verified that flat convergence implies weak convergence. Conversely, if \(X\) admits local coning inequalities and \(\sup \mathbf{N}(T_i) < \infty\), then weak convergence also implies flat convergence (see \cite{wenger07}).

In \cite{sw11}, Sormani and Wenger introduced a notion of flat convergence for currents which are not necessarily defined on the same metric space. The \emph{instrinsic flat distance} between two integral current spaces \(X_1\), \(X_2\) of the same dimension is defined as 
\[
d_{\mathcal{F}}(X_1, X_2)= \inf \mathcal{F}_Z\big(\phi_{1\#}\bb{X_1}-\phi_{2\#}\bb{X_2}\big),
\]
where the infimum is taken over all complete metric spaces \(Z\) and all isometric embeddings \(\phi_{i}\) of \(X_i\) into \(Z\). We say that a sequence \(X_i\) of integral current spaces \emph{converges in the intrinsic flat sense} to an integral current space \(X\) if \(d_{\mathcal{F}}(X_i, X)\to 0\) as \(i\to \infty\). The following Arzelà-Ascoli-type theorem is due to Sormani.

\begin{theorem}[see Theorem 6.1 in \cite{sor18}]\label{thm:Sormani-arzela}
Suppose \(X_i\) are integral current spaces converging to the integral current space \(X\) in the intrinsic flat sense. Further, suppose \(f_i\colon X_i \to Y\) are \(L\)-Lipschitz maps to a compact metric space \(Y\). Then there exists a subsequence, also denoted by \(f_{i}\), that converges pointwise to an \(L\)-Lipschitz map \(f\colon X \to Y\). 
\end{theorem}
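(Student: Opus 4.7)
The plan is to embed everything into a common ambient metric space and then run a classical Arzel\`a--Ascoli argument there. First I would use the definition of intrinsic flat distance together with Kuratowski's isometric embedding to construct a complete metric space $Z$ (e.g.\ inside $\ell^\infty$) together with isometric embeddings $\phi_i\colon X_i\hookrightarrow Z$ and $\phi\colon X\hookrightarrow Z$ such that $\phi_{i\#}\bb{X_i}$ converges in flat norm (in $Z$) to $\phi_{\#}\bb{X}$. Next, I would embed the compact target $Y$ isometrically into an injective Banach space (say $\ell^\infty(Y)$ via Kuratowski) and, using McShane's extension theorem, extend each $L$-Lipschitz map $f_i\circ \phi_i^{-1}$ to an $L$-Lipschitz map $F_i\colon Z \to \ell^\infty(Y)$.

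The main geometric ingredient is then an approximation lemma: for each $x\in X$ there exist $x_i\in X_i$ with $\phi_i(x_i)\to \phi(x)$ in $Z$. To establish it I would use that $\phi(x)\in \set \phi_{\#}\bb{X}$, so every open ball $B(\phi(x),r)$ carries positive $\phi_{\#}\bb{X}$-mass. Choosing a test tuple $\omega\in \mathcal{D}^n(Z)$ whose first component is supported in $B(\phi(x),r)$ and satisfies $\phi_{\#}\bb{X}(\omega)\neq 0$, the weak convergence implied by flat convergence yields $\phi_{i\#}\bb{X_i}(\omega)\neq 0$ for large $i$, forcing $\phi_i(X_i)\cap B(\phi(x),r)\neq \varnothing$.

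With approximating sequences in hand, I would fix a countable dense subset $\{x_k\}_{k\in \N}\subset X$ and, for each $k$, a sequence $x_i^{(k)}\in X_i$ approximating $x_k$. Using the compactness of $Y$ together with a diagonal argument, extract a subsequence along which $f_i(x_i^{(k)})=F_i(\phi_i(x_i^{(k)}))$ converges in $Y$ for every $k$; define $f(x_k)$ to be the resulting limit. The $L$-Lipschitz bounds imply both that $f$ is $L$-Lipschitz on $\{x_k\}$ and that the limiting values are independent of the particular approximating sequences. Uniform continuity then produces a unique $L$-Lipschitz extension $f\colon X \to Y$, and a further approximation argument shows that $f_i(y_i)\to f(x)$ whenever $y_i\in X_i$ satisfy $\phi_i(y_i)\to \phi(x)$.

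The step I expect to be the main obstacle is the point-approximation argument, since intrinsic flat convergence is genuinely weaker than Gromov--Hausdorff convergence of supports (Example~\ref{ex:spt-vs-set} already illustrates how far $\spt$ and $\set$ can diverge), so one must carefully exploit the hypothesis $X=\set\bb{X}$ and localize test forms to small balls. A secondary subtlety is making the notion of \emph{pointwise convergence of $f_i\to f$} precise, since the domains $X_i$ are all distinct; the cleanest formulation is through the common embeddings $\phi_i,\phi$ and the extensions $F_i\colon Z\to \ell^\infty(Y)$, which one ultimately shows converge uniformly on compact subsets of $Z$ meeting $\phi(X)$.
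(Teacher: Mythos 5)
The paper does not actually prove this theorem: it is stated as a quotation of Theorem~6.1 in Sormani's paper \cite{sor18}, so there is no in-paper proof to compare against. That said, your reconstruction is essentially the standard and correct argument, and it matches the ingredients the present paper does supply. In particular, your ``main geometric ingredient'' --- for every $x\in X$ there is a sequence $x_i\in X_i$ with $\phi_i(x_i)\to\phi(x)$ --- is exactly Lemma~\ref{lem:points-in-support-are-limits} of the paper, proved there with the same localization-of-test-tuples argument (attributed to Wenger), and your steps 4--6 (diagonalization over a countable dense set, Lipschitz consistency of the limit across approximating sequences, and uniqueness of the Lipschitz extension to all of $X$) are sound and routine.

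The one step you describe too quickly is the very first one: producing \emph{a single} complete separable $Z$ with isometric embeddings $\phi_i\colon X_i\hookrightarrow Z$, $\phi\colon X\hookrightarrow Z$ such that $\phi_{i\#}\bb{X_i}$ flat-converges to $\phi_\#\bb{X}$. The definition of $d_{\mathcal F}$ only gives, for each $i$, a near-optimal ambient space $Z_i$ and two embeddings of $X_i$ and $X$ into $Z_i$; a Kuratowski embedding by itself does not make these compatible. One must glue the $Z_i$'s along the copies of $X$ (and pass to the completion, restricting to a separable subspace), which is the Sormani--Wenger embedding theorem. Citing that result, or sketching the gluing, would close the gap; otherwise the proposal is correct and essentially the intended proof.
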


Here, $f_i$ is said to converge pointwise to $f$ if there exists a separable complete metric space \(Z\), and isometric embeddings \(\phi_i \colon X_i \to Z\), \(\phi \colon X\to Z\) such that \(\phi_{i\#}\bb{X_i}\) flat converges to \(\phi_{\#}\bb{X}\) and, whenever \(x\in X\) and \(x_i\in X_i\) are such that \(\phi_i(x_i)\) converges to \(\phi(x)\), then \(f_i(x_i)\) converges to \(f(x)\). 

The map \(f\colon X\to Y\) will be called a \emph{Sormani limit} of the subsequence \(f_{i}\). We note that for every \(x\in X\) there is always such a sequence \(x_i\in X_i\) as above. This follows directly from the next lemma.

\begin{lemma}\label{lem:points-in-support-are-limits}
Let \(Z\) be a complete metric space and \(T_i\in \bI_n(Z)\) a sequence flat converging to \(T\in \bI_n(Z)\). Then for every \(z\in \spt T\) there is a sequence \(z_i\in \set T_i\) such that \(z_i\to z\) as \(i\to \infty\). 
\end{lemma}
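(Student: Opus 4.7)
The plan is to argue by contradiction. Suppose there is $z\in \spt T$ for which no sequence $z_i\in \set T_i$ converges to $z$. Then, after passing to a subsequence, there exists $\varepsilon>0$ with $B(z,\varepsilon)\cap \set T_i=\varnothing$ for every $i$. Since $\norm{T_i}$ is concentrated on $\set T_i$, this gives $\norm{T_i}(B(z,\varepsilon))=0$ for all $i$ in the subsequence. My goal would be to transfer this vanishing to the limit, producing $s\in(0,\varepsilon)$ with $\norm{T}(B(z,s))=0$, which contradicts $z\in \spt T$.

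A first, standard ingredient I would use is that flat convergence of integral currents implies weak convergence. Indeed, writing $T-T_i=U_i+\partial V_i$ with $\mass(U_i)+\mass(V_i)\to 0$, the mass bound \eqref{eq:mass-ineq} applied to $U_i$, together with the identity $\partial V_i(h,\pi_1,\dots,\pi_n)=V_i(1,h,\pi_1,\dots,\pi_n)$ and the same bound applied to $V_i$, yields $T_i(h,\pi_1,\dots,\pi_n)\to T(h,\pi_1,\dots,\pi_n)$ for every $(h,\pi_1,\dots,\pi_n)\in \mathcal D^n(Z)$.

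Next I would choose a radius $s\in (0,\varepsilon)$ such that $\norm{T}(\partial B(z,s))=0$; such radii exist for all but countably many values, since the spheres $\partial B(z,r)$ are pairwise disjoint and $\norm{T}$ is finite. For $\delta\in(0,s)$, let $\phi_\delta\colon Z\to[0,1]$ be the $1/\delta$-Lipschitz cutoff that equals $1$ on $B(z,s-\delta)$, vanishes outside $B(z,s)$, and interpolates linearly in between. For any bounded Lipschitz $h$ and any $\pi_1,\dots,\pi_n\in \LIP(Z)$, the function $\phi_\delta h$ is supported in $B(z,\varepsilon)$, so the mass bound \eqref{eq:mass-ineq} combined with $\norm{T_i}(B(z,\varepsilon))=0$ gives $T_i(\phi_\delta h,\pi_1,\dots,\pi_n)=0$ for each $i$. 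By the weak convergence established above, this forces $T(\phi_\delta h,\pi_1,\dots,\pi_n)=0$ for every $\delta>0$.

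Finally, letting $\delta\downarrow 0$, the functions $\phi_\delta h$ converge to $\mathbbm{1}_{B(z,s)}h$ pointwise off $\partial B(z,s)$, hence $\norm{T}$-almost everywhere by the choice of $s$. Applying dominated convergence in the canonical extension of $T$ to $L^1(Z,\norm{T})\times \LIP(Z)^n$ then produces $T\on B(z,s)(h,\pi_1,\dots,\pi_n)=0$ for every test tuple, so $\norm{T}(B(z,s))=0$, the desired contradiction. The only step that requires real care is the selection of $s$ guaranteeing $\norm{T}(\partial B(z,s))=0$, which is what makes the final dominated-convergence step valid; everything else is a direct combination of weak convergence and the standard extension of a metric current to bounded Borel functions in its scalar slot.
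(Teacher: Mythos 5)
Your proof is correct, but it takes a genuinely different route from the paper's. The paper argues directly: given $z\in\spt T$ and $\varepsilon>0$, it invokes Ambrosio--Kirchheim's Proposition 2.7 (the characterization of the mass measure through test tuples) to produce some $(h,\pi_1,\dots,\pi_n)\in\mathcal D^n(Z)$ with $\spt h\subset B(z,\varepsilon)$ and $T(h,\pi_1,\dots,\pi_n)\neq 0$; weak convergence then makes $T_i(h,\pi_1,\dots,\pi_n)\neq 0$ for large $i$, and the mass bound forces $\spt T_i$ (hence, by density, $\set T_i$) to hit $B(z,\varepsilon)$. You instead argue by contradiction: if $\set T_i$ misses a fixed ball $B(z,\varepsilon)$ along a subsequence, then $\norm{T_i}(B(z,\varepsilon))=0$, and you transfer this vanishing to $\norm{T}$ on a smaller ball using hand-built Lipschitz cutoffs $\phi_\delta$, weak convergence, a choice of radius $s$ with $\norm{T}(\partial B(z,s))=0$, and dominated convergence in the extension of $T$ to bounded Borel functions in the scalar slot. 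Both arguments are sound and both reduce to the same weak-convergence input. The paper's approach is shorter because it delegates the ``test tuples detect mass'' step to a cited structural result; yours is longer but entirely self-contained and makes the detection mechanism explicit via cutoffs. One small remark: your contradiction setup implicitly assumes $\set T_i\neq\varnothing$; if $T_i=0$ for infinitely many $i$ then flat convergence forces $T=0$ and the statement is vacuous, so this is harmless, but it is worth noting. Also, the careful choice of $s$ with null boundary sphere, while convenient, is not strictly needed: one could instead let the cutoffs converge to the indicator of the open ball of radius $s$, which still contains a closed ball around $z$ and still contradicts $z\in\spt T$.
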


\begin{proof}
The following argument is due to Wenger (see \cite[Proposition 2.2]{wenger-2011}). 
Let \(z\in \spt T\) and \(\varepsilon >0\). Then, using \cite[Proposition 2.7]{amb00}, one can show there exists \((h, \pi_1, \dots, \pi_n)\in \mathcal{D}^n(X)\) such that \(T(h, \pi_1, \dots, \pi_n)\neq 0\) and \(\spt h \subset B(x, \varepsilon)\). Since \(T_i\) flat converges to \(T\), and therefore in particular converges weakly to \(T\), for any \(i\) that is sufficiently large, one has \(T_i(h, \pi_1, \dots, \pi_n)\neq 0\). It now follows from  Definition~\ref{def:current}\eqref{ax:3} that for every such \(i\) there is \(z_i\in \spt T_i \cap \spt h\). Since \(\varepsilon >0\) was arbitrary and $\set T_i$ is dense in $\spt T_i$, a sequence \(z_i\in \set T_i\) such that \(z_i\) converges to \(z\) is now easily constructed. 
\end{proof}
It turns out that the convergence as in Theorem~\ref{thm:Sormani-arzela} is compatible with push-forwards of currents.
\begin{lemma}\label{lem:functorality-arzela-ascoli}
Let \(X_i\) be a sequence of integral current spaces converging in the intrinsic flat sense to an integral current space \(X\). Suppose further that \(f_i \colon X_i \to \R^N\) are uniformly bounded \(L\)-Lipschitz maps and let \(f\colon X\to \R^N\) be the Sormani limit of some subsequence \(f_{i}\). Then \(f_{i\#} \bb{X_{i}}\) flat converges to \(f_\# \bb{X}\). 
\end{lemma}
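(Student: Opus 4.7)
The plan is to combine an Arzela--Ascoli extraction for the maps $f_i$ with the homotopy inequality for the flat norm. First I would realize the Sormani convergence via a common complete metric space $Z$ with isometric embeddings $\phi_i\colon X_i\to Z$ and $\phi\colon X\to Z$, set $T_i\coloneqq \phi_{i\#}\bb{X_i}$ and $T\coloneqq \phi_\#\bb{X}$ so that $T_i$ flat converges to $T$ in $Z$, and have $f_i(x_i)\to f(x)$ whenever $\phi_i(x_i)\to \phi(x)$. Applying McShane's extension theorem coordinatewise to $f_i\circ\phi_i^{-1}$ then yields uniformly bounded, uniformly Lipschitz maps $F_i\colon Z\to \mathbb{R}^N$ satisfying $F_i\circ\phi_i=f_i$ and taking values in a common compact set $K\subset\mathbb{R}^N$.

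After passing to a subsequence via Arzela--Ascoli on the support of $T$, we may assume that $F_i$ converges uniformly on $\spt T$ to a Lipschitz function which we extend to a Lipschitz map $G\colon Z\to\mathbb{R}^N$. The key identification $G\circ\phi=f$ on $\spt\bb{X}$ then follows from Lemma~\ref{lem:points-in-support-are-limits}: given $x\in\spt\bb{X}$, pick $x_i\in\set\bb{X_i}$ with $\phi_i(x_i)\to\phi(x)$; on one hand $F_i(\phi_i(x_i))=f_i(x_i)\to f(x)$ by Sormani convergence, and on the other $F_i(\phi_i(x_i))\to G(\phi(x))$ by uniform convergence on $\spt T$. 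Hence $G_\# T=f_\#\bb{X}$, while by construction $(F_i)_\# T_i=f_{i\#}\bb{X_i}$.

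For the final estimate I would decompose
\[
f_{i\#}\bb{X_i}-f_\#\bb{X} \;=\; (F_i)_\#(T_i-T) \;+\; \bigl((F_i)_\# T - G_\# T\bigr).
\]
Applying $(F_i)_\#$ to any near-optimal decomposition $T_i-T=U+\partial V$ in $Z$ gives $\mathcal{F}_{\mathbb{R}^N}((F_i)_\#(T_i-T))\leq (\Lip F_i)^{n+1}\,\mathcal{F}_Z(T_i-T)\to 0$. For the second term, the standard homotopy inequality applied to the linear interpolation between $G$ and $F_i$ produces a bound of the form $C\sup_{\spt T}|F_i-G|\cdot(\mass(T)+\mass(\partial T))$, which vanishes in the limit thanks to uniform convergence on $\spt T$ and the fact that $T$ is a normal current of finite mass. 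A standard subsequence-of-subsequences argument then promotes the flat convergence from the Arzela--Ascoli subsequence to the original sequence, since the limit $f_\#\bb{X}$ does not depend on the extracted subsequence.

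The main obstacle is precisely bridging the pointwise nature of the Sormani convergence (which only relates $f_i(x_i)$ and $f(x)$ along sequences with $\phi_i(x_i)\to\phi(x)$) to the uniform estimate $\sup_{\spt T}|F_i-G|\to 0$ demanded by the homotopy inequality. This bridge is built from the Arzela--Ascoli compactness of the $F_i$, made possible by the uniform boundedness and Lipschitz hypothesis on the $f_i$, together with Lemma~\ref{lem:points-in-support-are-limits}, which identifies the uniform limit $G$ as an extension of $f\circ\phi^{-1}$ on $\spt\bb{X}$.
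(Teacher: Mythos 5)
Your decomposition
\[
f_{i\#}\bb{X_i}-f_\#\bb{X} = (F_i)_\#(T_i-T) + \bigl((F_i)_\# T - G_\# T\bigr)
\]
and the idea of controlling the second term by the homotopy formula is a genuinely different route from the paper. The paper handles the second term by showing $F_i\circ\phi \to F\circ\phi$ pointwise on $\spt T$ (exactly the identification you make via Lemma~\ref{lem:points-in-support-are-limits}), deducing \emph{weak} convergence $F_{i\#}T\rightharpoonup F_\#T$ from continuity axiom (1) plus dominated convergence, and then upgrading weak to flat convergence using Wenger's theorem (weak convergence of a uniformly $\mathbf{N}$-bounded sequence in a space with coning inequalities implies flat convergence). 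Your homotopy-formula approach replaces that last upgrade by a direct mass estimate, which is conceptually more elementary.

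However, there is a gap in the step where you invoke Arzel\`a--Ascoli to get
$\sup_{\spt T}|F_i-G|\to 0$. Arzel\`a--Ascoli (or, equivalently, equicontinuity plus pointwise convergence) yields uniform convergence only on \emph{compact} subsets of $Z$, and $\spt T$ need not be compact: for an integral current space $(X,T)$ the support of $T$ can be non-compact and even unbounded, as in Example~\ref{ex:spt-vs-set} where $\spt T=\R^{n+1}$. So the sup-norm bound
$C\sup_{\spt T}|F_i-G|\cdot(\mass(T)+\mass(\partial T))$
does not close without an additional argument. The cleanest repair is to observe that the homotopy formula actually gives the sharper integral bound
\[
\mathcal{F}\bigl((F_i)_\#T - G_\#T\bigr)
\;\lesssim\;
\int |F_i-G|\,\ud\|T\|
\;+\;
\int |F_i-G|\,\ud\|\partial T\|,
\]
up to a constant depending on $n$, $L$ and $N$. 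Since $\|T\|$ and $\|\partial T\|$ are concentrated on $\set T$ and $\set\partial T$ respectively (subsets of $\spt T$), the pointwise convergence $F_i\to G$ you obtain from Lemma~\ref{lem:points-in-support-are-limits} holds $\|T\|$- and $\|\partial T\|$-a.e., and the integrands are uniformly bounded by the hypothesis that the $f_i$ are uniformly bounded; dominated convergence then finishes the estimate. With this fix the Arzel\`a--Ascoli extraction (and hence the subsequence-of-subsequences step) also becomes superfluous: you can simply take $F$ to be a McShane extension of $f\circ\phi^{-1}$ and show directly that $F_i\circ\phi\to F\circ\phi$ pointwise on $\spt T$. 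Alternatively, one can salvage the sup-norm route by tightness of $\|T\|+\|\partial T\|$, truncating $T$ to a compact set and estimating the remainder, but the integral bound is simpler.
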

\begin{proof}
Since \(f_{i\#} \bb{X_{i}}=0\) and \(f_\# \bb{X}=0\) whenever \(N < n\), we may assume in the following that \(n\leq N\). Due to Theorem~\ref{thm:Sormani-arzela}, there exist a separable complete metric space \(Z\) and isometric embeddings \(\phi_i \colon X_i \to Z\) and \(\phi \colon X \to Z\), such that \(\phi_{i\#} \bb{X_i}\) flat converges to \(\phi_\# \bb{X}\) and the sequence \(f_i\) converges pointwise to \(f\) in the sense that \(f_i(x_i)\to f(x)\) as \(i\to \infty\), whenever \(x\in X\) and \(x_i\in X_i\) are such that \(\phi_i(x_i)\) converges to \(\phi(x)\).  To simplify the notation, we write \(T_i=\phi_{i\#}\bb{X_i}\) and \(T=\phi_{\#}\bb{X}\). 

 By McShane's extension theorem \cite[Theorem 1.27]{brudnyi2012} there exist $(\sqrt{N}L)$-Lipschitz maps 
 \(F_i\colon Z\to \R^N\) and \(F\colon Z\to \R^N\) such that \(f_i=F_i \circ \phi_i\) for all \(i\in \N\) and \(f=F\circ \phi\). In particular, \(F_{i\#} T_i=f_{i\#} \bb{X_i}\) and \(F_\# T=f_\# \bb{X}\). Thus, using the triangle inequality, we get
\begin{equation}\label{eq:triangle-ineq-1}
\mathcal{F}(f_{i\#} \bb{X_i}-f_\# \bb{X}) \leq \mathcal{F}(F_{i\#} T_i-F_{i\#}T)+\mathcal{F}(F_{i\#}T-F_\# T)
\end{equation}
for all \(i\in \N\). Since the maps \(F_i\) are uniformly Lipschitz and \(T_i\) flat converges to \(T\), it follows from \eqref{eq:mass-pf-inequality} that \(\mathcal{F}(F_{i\#} T_i-F_{i\#}T)\to 0\) as \(i\to \infty\).

Next, we show that the other term on the right-hand side of \eqref{eq:triangle-ineq-1} also converges to zero. Notice that \(F_i\circ \phi\) converges pointwise to \(F\circ \phi\). Indeed, let \(x\in X\) and let \(x_i\in X_i\) be a sequence such that \(\phi_i(x_i)\) converges to \(\phi(x)\). The existence of such a sequence is guaranteed by Lemma~\ref{lem:points-in-support-are-limits}.  Using that \(f_i(x_i)=F_i(\phi_i(x_i))\), we get
\begin{equation}\label{eq:triangle-ineq-2}
d(f(x), F_i(\phi(x)))\leq d(f(x), f_i(x_i))+d(F_i(z_i), F_i(z)),
\end{equation}
where \(z_i=\phi_i(x_i)\) and \(z=\phi(x)\).
As \(f\) is the Sormani limit of the \(f_i\)'s, we have that \(f_i(x_i)\) converges to \(f(x)\). Moreover, since \(d(F_i(z_i), F_i(z))\leq L d(z_i, z)\) and \(z_i \to z\) as \(i\to \infty\), it follows from \eqref{eq:triangle-ineq-2} that \(F_i(\phi(x))\) converges to \(f(x)\), as desired. 

Now, since \(F_i\circ \phi\) converges pointwise to \(F\circ \phi\), by using  Definition~\ref{def:current}\eqref{ax:1},\eqref{ax:3} and Lebesgue's dominated convergence theorem, it is easy to check that \(F_{i\#} T\) converges weakly to \(F_\# T\). Since the sequence is uniformly \(\mathbf{N}\)-bounded and $\mathbb{R}^N$ admits coning inequalities for \(\bI_i(\R^N)\) for \(i=1, \dots, n\), it follows that \(F_{i\#}T\) flat converges to \(F_\# T\). Hence, because of \eqref{eq:triangle-ineq-1}, \(f_{i\#} \bb{X_i}\) flat converges to \(f_\# \bb{X}\), as desired.
\end{proof}

\begin{figure}[b]
\centering{
\includegraphics[width=8cm]{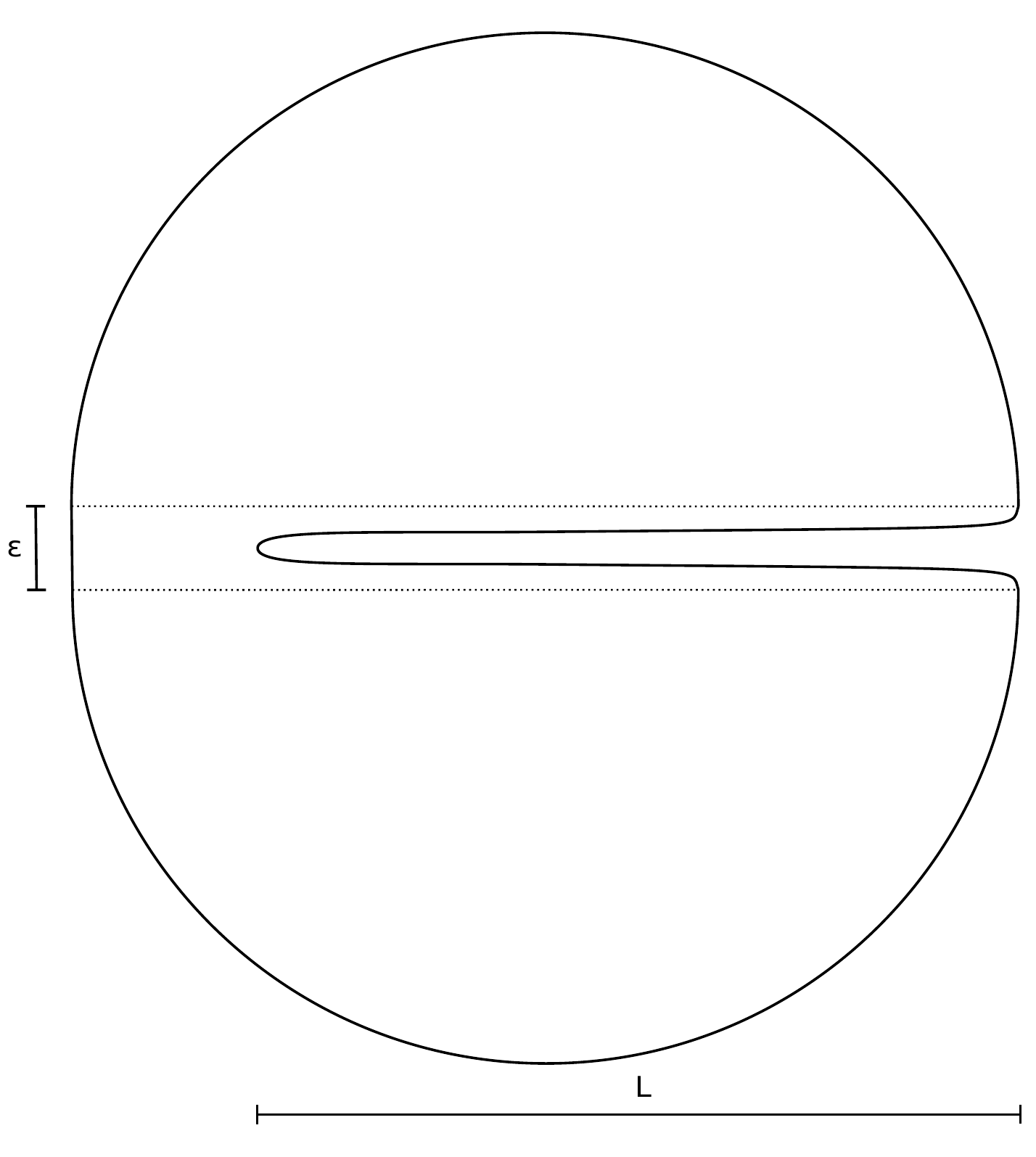}}
\caption{'Flat-football' counterexample to  Question \ref{question:Perales-Sormani}.}
\label{fig:flat-football}
\end{figure}

\subsection{Perales question}\label{sec:7.2}
The following example shows that the Perales question stated in the introduction has a negative answer in general.  The argument uses the following observation, which follows directly from Lemma \ref{lem:points-in-support-are-limits}: If $T_i\in \bI_{n}(Z)$ flat converges to $T\in \bI_n(Z)$ and \(\set T_i\)  Gromov--Hausdorff converges to \(Y\), then \(\spt T\) admits an isometric embedding into \(Y\).

\begin{example}\label{ex:counterexample}
Fix \(L\in (0,2)\) and let \(M_\varepsilon\) denote the flat \(2\)-dimensional Riemannian manifold with boundary depicted in Figure~\ref{fig:flat-football}. Notice that \(M_\varepsilon\) admits a natural decomposition into three pieces, namely \(M_\varepsilon = B_+^2 \cup R_\varepsilon \cup B_-^2\), where \(B_\pm^2\) are isometric to the half-ball \(B^2 \cap \{y \geq 0\}\) and \(R_\varepsilon\) is contained in a rectangle of length \(2\) and width \(\varepsilon\). By construction, \(\vol^2(M_\varepsilon) \to \vol^2(B^2)\) as \(\varepsilon \to 0\). Let \(f_\varepsilon \colon M_\varepsilon \to B^2\) denote the map which collapses \(R_\varepsilon\) to the \(x\)-axis and is the identity on the half-balls \(B_\pm^2\). Clearly, \(f_\varepsilon\) is \(1\)-Lipschitz. Moreover, using that
every cycle in \(\bI_1(S^1)\) is of the form \(m\cdot \bb{S^1}\) for some $m\in \Z$,
it is easy to check that \(f_{\varepsilon \#} \bb{\partial M_\varepsilon} = \bb{S^1}\) for every \(\varepsilon >0\). Let \(U\susbet \R^2\) denote a slit unit disk where the slit has length \(L\). The Gromov-Hausdorff limit of \((M_\varepsilon)\) is equal to the metric completion of \(U\) equipped with the intrinsic metric. In particular, \((M_\varepsilon)\) thus does not converge to \(B^2\) in the intrinsic flat sense.
\end{example}

Question~\ref{question:Perales-Sormani} has a positive answer if, in addition, one assumes a suitable bound for the limit of the masses of the boundary currents. We now prove  Corollary~\ref{prop:answer-to-sormani-perales}, whose statement can be found in the introduction.

\begin{proof}[Proof of Corollary \ref{prop:answer-to-sormani-perales}]
Lemma~\ref{lem:functorality-arzela-ascoli} implies that \(f_{i\#}\bb{X_i}\) flat converges to \(f_\# \bb{X}\). 
Since \(\partial (f_{i\#}\bb{X_i})=f_{i\#}\bb{\partial X_i}\) flat converges to \(\bb{\partial C}\), it follows that \(\partial (f_\# \bb{X})=\bb{\partial C}\). Moreover, by the lower semi-continuity of mass (see \cite[p. 19]{amb00}), we have that
\[
\mass(\bb{X})\leq \liminf _{i\to \infty} \mass(\bb{X_i}) \leq \vol^n(C)
\]
and analogously \(\mass(\bb{\partial X})\le \vol^{n-1}(\partial C)\).  Therefore, by invoking Theorem~\ref{thm:rigidity-result-current-version}, we find that \(f\colon X\to C\) is an isometry. 
\end{proof}

Using this corollary the following result is a direct consequence of the Wenger compactness theorem.

\begin{corollary}\label{cor:flat-convergence}
Let $C\subset \R^n$ be a convex body and \((X_i)\) a sequence of uniformly bounded integral current spaces. Suppose \(f_i \colon X_i \to \R^N\) are \(1\)-Lipschitz maps such that \(f_{i\#}\bb{\partial X_i}\) flat converges to \(\bb{\partial C}\). If 
\[
\lim_{i\to \infty} \mass(\bb{\partial X_i}) \leq \vol^{n-1}(\partial C), \quad \quad \lim_{i\to \infty} \mass(\bb{X_i}) \leq \vol^n(C),
\]
then \((X_i)\) converges in the intrinsic flat sense to \(C\).  
\end{corollary}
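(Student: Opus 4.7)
The plan is to combine Wenger's compactness theorem for integral current spaces with Corollary~\ref{prop:answer-to-sormani-perales} via a standard subsequence argument. Since intrinsic flat convergence is metrizable, it suffices to show that every subsequence of $(X_i)$ admits a further subsequence converging in the intrinsic flat sense to $C$.

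Fix such a subsequence and relabel it $(X_i)$. The uniform diameter bound together with the two mass bounds provide uniform upper bounds on $\mass(\bb{X_i})$ and $\mass(\bb{\partial X_i})$ for large $i$, so Wenger's compactness theorem \cite{wenger-2011} extracts a further subsequence (again relabeled $(X_i)$) converging in the intrinsic flat sense to some integral current space $X$.

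The next step is to secure a common compact target $K\subset \R^N$ for the $f_i$. Choose $p_0\in \partial C=\spt \bb{\partial C}$. Applying Lemma~\ref{lem:points-in-support-are-limits} to the flat convergence $f_{i\#}\bb{\partial X_i}\to \bb{\partial C}$ in $\R^N$ produces points $q_i\in \set f_{i\#}\bb{\partial X_i}\subset \overline{f_i(X_i)}$ with $q_i\to p_0$. Combined with the $1$-Lipschitz property of each $f_i$ and the uniform bound $\diam(X_i)\leq D$, this forces $f_i(X_i)\subset K:=\overline{B(p_0,D+1)}$ for all sufficiently large $i$. With the compact target in hand, Theorem~\ref{thm:Sormani-arzela} yields a further subsequence along which $f_i$ converges in the Sormani sense to a $1$-Lipschitz map $f\colon X\to K$.

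All hypotheses of Corollary~\ref{prop:answer-to-sormani-perales} are now in place, and invoking it shows that $f\colon X\to C$ is an isometry; in particular Lemma~\ref{lem:functorality-arzela-ascoli} yields $f_\#\bb{X}=\bb{C}$, so that $X$ and $C$ coincide as integral current spaces and $d_{\mathcal{F}}(X,C)=0$ (the degenerate possibility $\bb{X}=0$ is automatically ruled out, since it would give $\partial(f_\#\bb{X})=0\neq \bb{\partial C}$). This closes the subsequence argument. The only nontrivial step is the construction of $K$, where the flat convergence of the boundary currents must be converted into a uniform bound on the images $f_i(X_i)$; everything else is a routine assembly of Wenger compactness, Sormani's Arzelà--Ascoli theorem, and Corollary~\ref{prop:answer-to-sormani-perales}.
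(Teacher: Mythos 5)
Your proof is correct and follows the same overall route as the paper's: Wenger compactness, Sormani's Arzel\`a--Ascoli theorem, Corollary~\ref{prop:answer-to-sormani-perales}, and the standard subsequence principle for a metrizable notion of convergence. The one place where you go beyond the paper is in pinning down the common compact target $K$: the paper merely asserts that uniform boundedness of the $X_i$ forces the $f_i$ to take values in a compact set, whereas you correctly observe that a uniform diameter bound alone leaves the images free to drift off to infinity, and you anchor them by applying Lemma~\ref{lem:points-in-support-are-limits} to the flat convergence $f_{i\#}\bb{\partial X_i}\to\bb{\partial C}$ to produce points $q_i\in\overline{f_i(X_i)}$ converging to a fixed $p_0\in\partial C$. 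That is a genuine (if small) refinement that makes the step airtight; the rest of your argument, including the remark that $\bb{X}=0$ is excluded, is a routine and correct assembly of the same ingredients the paper uses.
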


\begin{proof}
By \cite[Theorem 1.2]{wenger-2011} there exists a subsequence, also denoted by \(X_i\), that converges in the intrinsic flat sense to an integral current space \(X\). Further, notice that since the \(X_i\) are uniformly bounded, the \(f_i\) take values in a compact set \(K\subset \R^N\). Let \(f\colon X\to K\) denote the Sormani limit of a subsequence of \((f_i)\). The existence of such a limit is guaranteed by Theorem~\ref{thm:Sormani-arzela}. In particular, \(f\) is \(1\)-Lipschitz. Now, Corollary~\ref{prop:answer-to-sormani-perales} tells us that \(f\) is an isometry \(X \to C\). Since the argument above can be applied to any subsequence of \((X_i)\), it follows that \((X_i)\) converges to \(C\) in the intrinsic flat sense, as desired.  
\end{proof}

\section{Counterexamples and open questions}\label{sec:8}
\label{sec:counterexamples-questions}
Theorem~\ref{thm:rigidity-result-current-version} and Corollary~\ref{cor:sphere-rigidity} show that convex bodies in $\mathbb{R}^n$ and the round sphere $S^n$ have the Lipschitz-volume rigidity property among all integral current spaces. This naturally leads to the question which other metric spaces $Y$ are Lipschitz-volume rigid among integral current spaces. A simple way to come up with non-Lipschitz volume rigid spaces is to consider non-intrinsic metrics. In particular, every compact Lipschitz submanifold $Y\subset\mathbb{R}^N$, which is not a convex subset, does not enjoy the Lipschitz-volume rigidity property when it is endowed with the Euclidean subspace metric. In this case the identity map $Y^{\text{int}}\to Y^{\text{euc}}$ is $1$-Lipschitz, volume and boundary volume preserving, but not an isometry. 

Note that in situations where Federer's constancy theorem is not valid the boundary push-forward  condition is not sufficient (e.g.\ for non-trivial spaces $Y$ with $\partial Y=0$).  In the following we refer as \emph{Lipschitz-volume rigidity} of $Y$ to the following property: \emph{Suppose $X$ is an integral current space of the same dimension as~$Y$ and $f\colon X\to Y$ is a $1$-Lipschitz map such that $f_{\#}\bb{X}=\bb{Y}$. If $\mass(\bb{\partial X})\leq \mass(\bb{\partial Y})$ and $\mass(\bb{X})\leq \mass(\bb{Y})$, then $f$ is an isometry.}

\begin{question}
\label{quest:which}
Let $Y\subset \mathbb{R}^N$ be a compact orientable connected $n$-dimensional Lipschitz manifold. Does $Y$ have Lipschitz-volume rigidity among integral current spaces when endowed with its intrinsic metric?
\end{question}

It is not hard to modify the proof of Theorem~\ref{thm:rigidity-result-current-version} to deduce an affirmative answer when $Y$ is smooth and $n=N$, and hence in particular $Y$ is flat. On the other hand Example~4.4 in \cite{CS20} suggests that the answer to Question~\ref{quest:which} is negative for general Lipschitz submanifolds. We suspect that the answer is affirmative when $Y$ is smooth but our proof does not seem amenable for such a generalization in a straightforward way, since it relies on Fubini-type decompositions of $\vol^n(Y)$.

The situation becomes even more complicated when one allows for non infinitesimally Euclidean integral current spaces. It turns out that the Lipschitz-volume rigidity of $Y$ can fail even when $Y$ is a convex body in a finite-dimensional normed space. For example, let $I^2$ be the convex body $[0,1]^2\susbet \R^2$ endowed with the Euclidean metric and $Y=I^2_\infty$ be the same set but endowed with the maximum norm. Then the identity map $f\colon I^2\to Y$ is $1$-Lipschitz, 
\[
\mass(\bb{I^2})=\mu^{m*}(I^2)=1=\mu^{m*}(I^2_\infty)=\mass(\bb{Y})
\]
and 
\[
\mass(\bb{\partial I^2})= \ell(\partial I^2)=4=\ell(\partial I^2_\infty)=\mass(\bb{\partial Y}),
\]
but $f$ is not an isometry.

As discussed in Section~\ref{subsec:jacobians} there is some ambiguity concerning volume as soon as non-Euclidean tangent spaces come into play. The preceeding counterexample stems from the observations that $\mass$ corresponds to the mass\(\ast\) Jacobian $\Jac^{m*}$ in the sense of \eqref{eq:mass*-characterization} and that $\Jac^{m*}(\sigma)$ is not strictly monotone in $\sigma$. Hence, another interesting question would be to investigate whether convex bodies in finite-dimensional normed spaces are Lipschitz-volume rigid among integral current spaces with respect to the Busemann mass $\mass^{\text{b}}$ or the Holmes--Thompson mass~$\mass^{\text{ht}}$.

Concerning Theorem \ref{thm:main-i} we were informed by Roger Züst that Lemma~\ref{lem:absolute-filling-vol} and hence the lower bound \eqref{eq:main-ineq} generalizes to convex bodies in finite-dimensional normed spaces. Indeed for a given Finsler mass $\mass^{\bullet}$ it seems natural to expect that validity of this inequality for all finite-dimensional normed spaces is   equivalent to a property that is often called quasi-convexity or semi-ellipticity over $\mathbb{Z}$ in the literature, see \cite{APT04, Iva08, lw17}. The counterexample above however illustrates that in the setting of normed spaces one can only hope for rigidity when the mass functional is strictly monotone, as is the case for $\mass^{\text{b}}$ or~$\mass^{\text{ht}}$ but not for $\mass$ or~$\mass^{\text{ir}}$.

\bibliographystyle{plain}
\bibliography{abib}

\def\cprime{$'$}
\begin{thebibliography}{10}

\bibitem{allen2020intrinsic}
Brian Allen and Raquel Perales.
\newblock Intrinsic flat stability of manifolds with boundary where volume
  converges and distance is bounded below.
\newblock {\em arXiv preprint arXiv:2006.13030}, 2020.

\bibitem{allen2020volume}
Brian Allen, Raquel Perales, and Christina Sormani.
\newblock Volume above distance below.
\newblock {\em to appear in Journal of Differential Geometry}, 2022.

\bibitem{APT04}
J.~C. \'{A}lvarez Paiva and A.~C. Thompson.
\newblock Volumes on normed and {F}insler spaces.
\newblock In {\em A sampler of {R}iemann-{F}insler geometry}, volume~50 of {\em
  Math. Sci. Res. Inst. Publ.}, pages 1--48. Cambridge Univ. Press, Cambridge,
  2004.

\bibitem{amb00}
Luigi Ambrosio and Bernd Kirchheim.
\newblock {Currents in metric spaces}.
\newblock {\em Acta Math.}, 185(1):1--80, 2000.

\bibitem{Ball92}
Keith Ball.
\newblock Ellipsoids of maximal volume in convex bodies.
\newblock {\em Geom. Dedicata}, 41(2):241--250, 1992.

\bibitem{BCIK05}
V.~Bangert, C.~Croke, S.~Ivanov, and M.~Katz.
\newblock Filling area conjecture and ovalless real hyperelliptic surfaces.
\newblock {\em Geom. Funct. Anal.}, 15(3):577--597, 2005.

\bibitem{basso2021undistorted}
Giuliano Basso, Stefan Wenger, and Robert Young.
\newblock Undistorted fillings in subsets of metric spaces.
\newblock preprint arXiv:2112.11905, 2021.

\bibitem{bcg95}
Gérard Besson, Gilles Courtois, and Sylvestre Gallot.
\newblock Entropies et rigidit\'{e}s des espaces localement sym\'{e}triques de
  courbure strictement n\'{e}gative.
\newblock {\em Geom. Funct. Anal.}, 5(5):731--799, 1995.

\bibitem{Bonicatto-Del-Nin-Pasqualetto-2022}
Paolo Bonicatto, Giacomo Del~Nin, and Enrico Pasqualetto.
\newblock Decomposition of integral metric currents.
\newblock {\em J. Funct. Anal.}, 282(7):Paper No. 109378, 28, 2022.

\bibitem{brudnyi2012}
Alexander Brudnyi and Yuri Brudnyi.
\newblock {\em Methods of geometric analysis in extension and trace problems.
  {V}olume 1}, volume 102 of {\em Monographs in Mathematics}.
\newblock Birkh\"{a}user/Springer Basel AG, Basel, 2012.

\bibitem{burago2022course}
Dmitri Burago, Yuri Burago, and Sergei Ivanov.
\newblock {\em A course in metric geometry}, volume~33 of {\em Graduate Studies
  in Mathematics}.
\newblock American Mathematical Society, Providence, RI, 2001.

\bibitem{bi95}
Dmitri Burago and Sergei Ivanov.
\newblock On asymptotic volume of tori.
\newblock {\em Geom. Funct. Anal.}, 5(5):800--808, 1995.

\bibitem{bi10}
Dmitri Burago and Sergei Ivanov.
\newblock Boundary rigidity and filling volume minimality of metrics close to a
  flat one.
\newblock {\em Ann. of Math. (2)}, 171(2):1183--1211, 2010.

\bibitem{bi13}
Dmitri Burago and Sergei Ivanov.
\newblock Area minimizers and boundary rigidity of almost hyperbolic metrics.
\newblock {\em Duke Math. J.}, 162(7):1205--1248, 2013.

\bibitem{chs22}
Simone Cecchini, Bernhard Hanke, and Thomas Schick.
\newblock Lipschitz rigidity for scalar curvature.
\newblock preprint arXiv:2206.11796, 2022.

\bibitem{cre20}
Paul Creutz.
\newblock Majorization by hemispheres and quadratic isoperimetric constants.
\newblock {\em Trans. Amer. Math. Soc.}, 373(3):1577--1596, 2020.

\bibitem{CS20}
Paul Creutz and Elefterios Soultanis.
\newblock Maximal metric surfaces and the {S}obolev-to-{L}ipschitz property.
\newblock {\em Calc. Var. Partial Differential Equations}, 59(5):Paper No. 177,
  34, 2020.

\bibitem{de-giorgi-1995}
Ennio De~Giorgi.
\newblock General {P}lateau problem and geodesic functionals.
\newblock {\em Atti Sem. Mat. Fis. Univ. Modena}, 43(2):285--292, 1995.

\bibitem{de-Rham-1955}
Georges de~Rham.
\newblock {\em Vari\'{e}t\'{e}s diff\'{e}rentiables. {F}ormes, courants, formes
  harmoniques}.
\newblock Publ. Inst. Math. Univ. Nancago, III. Hermann \& Cie, Paris, 1955.

\bibitem{Upcoming}
Giacomo Del~Nin and Raquel Perales.
\newblock Rigidity of mass-preserving 1-{L}ipschitz maps from integral current
  spaces into {$\mathbb R^n$}.
\newblock {\em arXiv preprint arXiv:2210.06406}, 2022.

\bibitem{EH21}
Behnam Esmayli and Piotr Haj\l~asz.
\newblock The coarea inequality.
\newblock {\em Ann. Fenn. Math.}, 46(2):965--991, 2021.

\bibitem{fed69}
Herbert Federer.
\newblock {\em Geometric measure theory}.
\newblock Die Grundlehren der mathematischen Wissenschaften, Band 153.
  Springer-Verlag New York, Inc., New York, 1969.

\bibitem{fed60}
Herbert Federer and Wendell~H. Fleming.
\newblock {Normal and integral currents}.
\newblock {\em Ann. of Math. (2)}, 72:458--520, 1960.

\bibitem{gro87}
M.~Gromov.
\newblock {Hyperbolic groups}.
\newblock In {\em {Essays in group theory}}, volume~8 of {\em {Math. Sci. Res.
  Inst. Publ.}}, pages 75--263. Springer, New York, 1987.

\bibitem{gromov-1983}
Mikhael Gromov.
\newblock {Filling Riemannian manifolds}.
\newblock {\em Journal of Differential Geometry}, 18(1):1 -- 147, 1983.

\bibitem{HLP20}
Lan-Hsuan Huang, Dan~A. Lee, and Raquel Perales.
\newblock Intrinsic flat convergence of points and applications to stability of
  the positive mass theorem.
\newblock {\em Ann. Henri Poincar\'{e}}, 23(7):2523--2543, 2022.

\bibitem{sormani-2017}
Lan-Hsuan Huang, Dan~A. Lee, and Christina Sormani.
\newblock Intrinsic flat stability of the positive mass theorem for graphical
  hypersurfaces of {E}uclidean space.
\newblock {\em J. Reine Angew. Math.}, 727:269--299, 2017.

\bibitem{sormani-corrigendum-2022}
Lan-Hsuan Huang, Dan~A. Lee, and Christina Sormani.
\newblock Corrigendum to: {I}ntrinsic flat stability of the positive mass
  theorem for graphical hypersurfaces of {E}uclidean space ({J}. {R}eine
  {A}ngew. {M}ath. 727 (2017), 269--299).
\newblock {\em J. Reine Angew. Math.}, 785:273--274, 2022.

\bibitem{Iva08}
S.~V. Ivanov.
\newblock Volumes and areas of {L}ipschitz metrics.
\newblock {\em Algebra i Analiz}, 20(3):74--111, 2008.

\bibitem{kir94}
Bernd Kirchheim.
\newblock {Rectifiable metric spaces: local structure and regularity of the
  {H}ausdorff measure}.
\newblock {\em Proc. Amer. Math. Soc.}, 121(1):113--123, 1994.

\bibitem{lan11}
Urs Lang.
\newblock {Local currents in metric spaces}.
\newblock {\em J. Geom. Anal.}, 21(3):683--742, 2011.

\bibitem{lang-wenger-2011}
Urs Lang and Stefan Wenger.
\newblock The pointed flat compactness theorem for locally integral currents.
\newblock {\em Comm. Anal. Geom.}, 19(1):159--189, 2011.

\bibitem{li15}
Nan Li.
\newblock Lipschitz-volume rigidity in {A}lexandrov geometry.
\newblock {\em Adv. Math.}, 275:114--146, 2015.

\bibitem{li20}
Nan Li.
\newblock Lipschitz-volume rigidity and globalization.
\newblock In {\em Proceedings of the {I}nternational {C}onsortium of {C}hines
  {M}athematicians 2018}, pages 311--322. Int. Press, Boston, MA, 2020.

\bibitem{lw14}
Nan Li and Feng Wang.
\newblock Lipschitz-volume rigidity on limit spaces with {R}icci curvature
  bounded from below.
\newblock {\em Differential Geom. Appl.}, 35:50--55, 2014.

\bibitem{lw17}
Alexander Lytchak and Stefan Wenger.
\newblock Area minimizing discs in metric spaces.
\newblock {\em Arch. Ration. Mech. Anal.}, 223(3):1123--1182, 2017.

\bibitem{par67}
Kalyanapuram~R. Parthasarathy.
\newblock {\em Probability measures on metric spaces}.
\newblock Probability and Mathematical Statistics, No.\ 3. Academic Press,
  Inc., New York-London, 1967.

\bibitem{ps17}
J.~Portegies and C.~Sormani.
\newblock Properties of the intrinsic flat distance.
\newblock {\em Algebra i Analiz}, 29(3):70--143, 2017.

\bibitem{pu52}
P.~M. Pu.
\newblock Some inequalities in certain nonorientable {R}iemannian manifolds.
\newblock {\em Pacific J. Math.}, 2:55--71, 1952.

\bibitem{ruan2022filling}
Yuping Ruan.
\newblock Filling volume minimality and boundary rigidity of metrics close to a
  negatively curved symmetric metric.
\newblock {\em arXiv preprint arXiv:2201.09175}, 2022.

\bibitem{sor18}
Christina Sormani.
\newblock Intrinsic flat {A}rzela-{A}scoli theorems.
\newblock {\em Comm. Anal. Geom.}, 26(6):1317--1373, 2018.

\bibitem{sor22}
Christina Sormani.
\newblock Talk: Integral current spaces and their properties.
\newblock At BIRS Workshop on Integral and Metric Geometry, 2022.
\newblock Video available under
  \url{www.birs.ca/events/2022/5-day-workshops/22w5181/videos}.

\bibitem{sw11}
Christina Sormani and Stefan Wenger.
\newblock The intrinsic flat distance between {R}iemannian manifolds and other
  integral current spaces.
\newblock {\em J. Differential Geom.}, 87(1):117--199, 2011.

\bibitem{wenger07}
Stefan Wenger.
\newblock Flat convergence for integral currents in metric spaces.
\newblock {\em Calc. Var. Partial Differential Equations}, 28(2):139--160,
  2007.

\bibitem{wenger-2011}
Stefan Wenger.
\newblock Compactness for manifolds and integral currents with bounded diameter
  and volume.
\newblock {\em Calc. Var. Partial Differential Equations}, 40(3-4):423--448,
  2011.

\bibitem{williams-2012}
Marshall Williams.
\newblock Metric currents, differentiable structures, and {C}arnot groups.
\newblock {\em Ann. Sc. Norm. Super. Pisa Cl. Sci. (5)}, 11(2):259--302, 2012.

\bibitem{zust-2019}
Roger Z\"{u}st.
\newblock Functions of bounded fractional variation and fractal currents.
\newblock {\em Geom. Funct. Anal.}, 29(4):1235--1294, 2019.

\bibitem{zust2021riemannian}
Roger Z{\"u}st.
\newblock The {R}iemannian hemisphere is almost calibrated in the injective
  hull of its boundary.
\newblock {\em arXiv preprint arXiv:2104.04498}, 2021.

\end{thebibliography}

\end{document}